\theoremstyle{plain}
\newtheorem{Theorem}{Theorem}[section]
\newtheorem{Lemma}[Theorem]{Lemma}
\newtheorem{Corollary}[Theorem]{Corollary}
\newtheorem{Proposition}[Theorem]{Proposition}
\theoremstyle{definition}
\newtheorem{Definition}[Theorem]{Definition}
\newtheorem{Example}[Theorem]{Example}
\theoremstyle{remark}
\newtheorem{Remark}[Theorem]{Remark}
\newtheoremstyle{case}{3pt}{3pt}{\addtolength{\@totalleftmargin}{2.0em}\addtolength{\linewidth}{-1.5em}\parshape 1 1.5em \linewidth}{}{\kern-1.5em\itshape}{.}{ }{\thmnote{#3}}
\theoremstyle{case}
\newtheoremstyle{caseinside}{3pt}{3pt}{\addtolength{\@totalleftmargin}{4.0em}\addtolength{\linewidth}{-3.0em}\parshape 1 3.0em \linewidth}{}{\kern-1.5em\itshape}{.}{ }{\thmnote{#3}}
\theoremstyle{caseinside}
\DeclareMathOperator{\Aut}{Aut} %automorphism group
\DeclareMathOperator{\dist}{dist}
\DeclareMathOperator{\E}{E}     %Euclidean group
\DeclareMathOperator{\GL}{\mathrm{GL}}
\DeclareMathOperator{\Ind}{Ind} %induced representation
\DeclareMathOperator{\Trans}{Trans} % group of translations \trans(d)
\DeclareMathOperator{\U}{\mathrm{U}} % unitary group
\renewcommand{\O}{\operatorname{{\mathrm O}}} %orthogonal group
\DeclarePairedDelimiter{\abs}{\lvert}{\rvert}
\DeclarePairedDelimiter{\angles}{\langle}{\rangle}
\DeclarePairedDelimiter{\norm}{\lVert}{\rVert}
\DeclarePairedDelimiter{\parens}{\lparen}{\rparen}
\DeclarePairedDelimiter{\braces}{\lbrace}{\rbrace}
\DeclarePairedDelimiterX{\iso}[2]{(}{)}{#1,#2} % (A,b) is better than (A|b) since E(d)=O(d)\ltimes\R^d, but for examples like (0&0\\1&1|0\\1) is (A|b) better than (A,b)
\DeclarePairedDelimiterX{\set}[2]{\{}{\}}{#1\,\delimsize|\,#2}
\DeclarePairedDelimiterX{\scalar}[2]{\langle}{\rangle}{#1,#2}
\newcommand{\@absonestar}[1]{\abs*{#1}_1}
\newcommand{\@absonenostar}[2][]{\abs[#1]{#2}_1}
\newcommand{\absone}{\@ifstar\@absonestar\@absonenostar}
\newcommand{\@zeronormstar}[2]{\norm*{#1}_{#2,0}}
\newcommand{\@zeronormnostar}[3][]{\norm[#1]{#2}_{#3,0}}
\newcommand{\zeronorm}{\@ifstar\@zeronormstar\@zeronormnostar}
\newcommand{\@nablanormstar}[2]{\norm*{#1}_{#2,\nabla}}
\newcommand{\@nablanormnostar}[3][]{\norm[#1]{#2}_{#3,\nabla}}
\newcommand{\nablanorm}{\@ifstar\@nablanormstar\@nablanormnostar}
\newcommand{\@nablazeronormstar}[2]{\norm*{#1}_{#2,0,\nabla,0}}
\newcommand{\@nablazeronormnostar}[3][]{\norm[#1]{#2}_{#3,\nabla,0}}
\newcommand{\nablazeronorm}{\@ifstar\@nablazeronormstar\@nablazeronormnostar}
\newcommand{\@newnormstar}[2]{\norm*{#1}_{#2,0,0}}
\newcommand{\@newnormnostar}[3][]{\norm[#1]{#2}_{#3,0,0}}
\newcommand{\newnorm}{\@ifstar\@newnormstar\@newnormnostar}
\newcommand{\@newnablanormstar}[2]{\norm*{#1}_{#2,\nabla,0,0}}
\newcommand{\@newnablanormnostar}[3][]{\norm[#1]{#2}_{#3,\nabla,0,0}}
\newcommand{\newnablanorm}{\@ifstar\@newnablanormstar\@newnablanormnostar}
\providecommand*{\cupdot}{%
  \mathbin{%
    \mathpalette\@cupdot{}%
  }%
}
\newcommand*{\@cupdot}[2]{%
  \ooalign{%
    $\m@th#1\cup$\cr
    \sbox0{$#1\cup$}%
    \dimen@=\ht0 %
    \sbox0{$\m@th#1\cdot$}%
    \advance\dimen@ by -\ht0 %
    \dimen@=.5\dimen@
    \hidewidth\raise\dimen@\box0\hidewidth
  }%
}
\providecommand*{\bigcupdot}{%
  \mathop{%
    \vphantom{\bigcup}%
    \mathpalette\@bigcupdot{}%
  }%
}
\newcommand*{\@bigcupdot}[2]{%
  \ooalign{%
    $\m@th#1\bigcup$\cr
    \sbox0{$#1\bigcup$}%
    \dimen@=\ht0 %
    \advance\dimen@ by -\dp0 %
    \sbox0{\scalebox{2}{$\m@th#1\cdot$}}%
    \advance\dimen@ by -\ht0 %
    \dimen@=.5\dimen@
    \hidewidth\raise\dimen@\box0\hidewidth
  }%
}
\newcommand\fourier[1]{%
\savestack{\tmpbox}{\stretchto{%
  \scaleto{%
    \scalerel*[\widthof{\ensuremath{#1}}]{\kern-.6pt\bigwedge\kern-.6pt}%
    {\rule[-\textheight/2]{1ex}{\textheight}}%WIDTH-LIMITED BIG WEDGE
  }{\textheight}% 
}{0.5ex}}%
\stackon[1pt]{#1}{\tmpbox}%
}
\newcommand{\conj}[1]{\overline{#1}}
\newcommand{\dual}[1]{\widehat{#1}}
\newcommand{\fdot}{{{}\cdot{}}} %spacing of $f(\cdot)$ in not equal to that of $f({}\cdot{})$, also $\lvert\cdot\rvert$ vs $\lvert{}\cdot{}\rvert$ -> good for norm and abs
\newcommand{\gdot}{\cdot} %group action
\newcommand{\gplus}[2]{#1\oplus #2}
\newcommand{\simrg}{\sim} % {\sim_{\R^{d_2},\G}}
\newcommand{\euler}{\mathrm e}
\newcommand{\iu}{\mathrm i}
\newcommand{\C}{{\mathbb C}}
\newcommand{\N}{{\mathbb N}}
\newcommand{\Q}{{\mathbb Q}}
\newcommand{\R}{{\mathbb R}}
\newcommand{\Z}{{\mathbb Z}}
\renewcommand{\epsilon}{\varepsilon} %Bernd benutzt varepsilon
\renewcommand{\phi}{\varphi} %Bernd benutzt varphi (cut-off function)
\newcommand{\F}{\mathcal F} %'big' finite group
\newcommand{\G}{\mathcal G}
\renewcommand{\H}{\mathcal H}
\newcommand{\id}{{id}} %identity of a group
\newcommand{\K}{\mathcal K} % subgroup of \H
\newcommand{\LL}{L_\SG} % lattice
\newcommand{\dualL}{L_\SG^*} % dual lattice
\newcommand{\MM}{M_0} % M_0=\set{n\in\N}{\T^m is a normal subgroup of \G}
\renewcommand{\P}{\mathcal P} %'big' point set
\newcommand{\CC}{\mathcal C} % Cuboid
\newcommand{\EE}{\mathcal E} % irreducble representations
\newcommand{\NN}{\mathcal N} % normal subgroup and nearest neighbors
\newcommand{\RR}{\mathcal R} % range (derivative)
\newcommand{\SG}{\mathcal S} %space group in R^{d_1}
\newcommand{\T}{\mathcal T} %'big' translation group
\newcommand{\rot}{L} % linear component
\newcommand{\trans}{\tau} % translational component
\newcommand{\bartau}{\bar\tau} % \taubar\colon \P_0\times\P_0 \to \R^d
\newcommand{\tildetau}{\tilde\tau} % % \tildetau\colon\P_0\to\R^d
\newcommand{\Per}{L_\mathrm{per}^\infty}
\newcommand{\eg}{\mbox{e.\,g.}\xspace}
\newcommand{\ie}{\mbox{i.\,e.}\xspace}
\begin{document}
\begin{center}
\begin{Large}
On discrete groups of Euclidean isometries: representation \\ theory, harmonic analysis and splitting properties 
\end{Large}
\\[0.5cm]
\begin{large}
Bernd Schmidt\footnote{Universität Augsburg, Germany, {\tt bernd.schmidt@math.uni-augsburg.de}} and 
Martin Steinbach\footnote{Universität Augsburg, Germany, {\tt steinbachmartin@gmx.de}} 
\end{large}
\\[0.5cm]
\today
\\[1cm]
\end{center}

\begin{abstract}
We study structural properties and the harmonic analysis of discrete subgroups of the Euclidean group. In particular, we 1.\ obtain an efficient description of their dual space, 2.\ develop Fourier analysis methods for periodic mappings on them, and 3.\ prove a Schur-Zassenhaus type splitting result. 
\end{abstract}

2010 {\em Mathematics Subject Classification}. 
20H15, %Other geometric groups, including crystallographic groups
43A65, %Representations of groups, semigroups, etc. (aspects of abstract harmonic analysis) 
43A30, %Fourier and Fourier-Stieltjes transforms on nonabelian groups and on semigroups, etc.
22E40. %Discrete subgroups of Lie groups
\medskip 

{\em Key words and phrases.} Euclidean group, discrete subgroup, unitary representation, Fourier transform, splitting. 

\tableofcontents

%-------------------------------------------------------------------
%-------------------------------------------------------------------
\section{Introduction}

The main objective of the present contribution is a study of various aspects of discrete subgroups of the Euclidean group $\E(d)$ concerning their representations, harmonic analysis and splitting properties. Special cases of such groups are finite subgroups of the rotation group $\O(d)$ and the crystallographic space groups. While these objects are very well studied, cf.\ \eg \cite{DuVal1964,Bradley2010,Brown1978} for classic references, the theory for general discrete groups of Euclidean isometries is considerably less developed. In a sense that is made precise below, they interpolate and combine aspects of finite rotation groups and space groups as they might be assumed to embed into a subgroup of $\gplus{\O(d_1)}{\mathcal{S}}$ for some $d_2$-dimensional space group $\mathcal{S}$, where $d_1+d_2=d$. We will see that it is possible to identify a ``translational part'' of such a group featuring some properties analogous to those of a $d_2$-dimensional lattice. This will allow us to 1.\ obtain an efficient description of the dual space of discrete subgroups of $\E(d)$, 2.\ to develop Fourier analysis methods for periodic mappings on them, and 3.\ to prove a Schur-Zassenhaus type splitting result with respect to the translational part. 

Besides its intrinsic mathematical interest, our motivation originates in a physical question on the stability of so-called {\em Objective Structures}. These particle systems were introduced by James in \cite{James2006} as a far reaching generalization of lattice systems and have been deployed successfully to describe a remarkable number of important structures ranging from biology (parts of viruses) to nanoscience (carbon nanotubes). 

In order to motivate and illustrate our investigations let us consider $\Z^d$ as the most basic and classical example of a lattice system. Localized mappings $u \in \ell^1(\Z^d)$ (say) can be conveniently analyzed in terms of their Fourier transform which in particular induces a resolution into plane waves $\euler^{2\pi\iu \scalar k\cdot}$ with dominant contributions from small ``wave numbers'' $|k|$. So as to account for lattice mappings with significant contributions from large large wave numbers, a feasible strategy is to investigate $N$-periodic functions $u : \Z^d \to \C$, $N \in \N$, for which $u(x + N e_i) = u(x)$ for all $i \in \{1, \ldots, d\}$ in the asymptotic regime $N \to \infty$. Such a function might alternatively be looked at as a function on $\Z_N^d$ and is described by its discrete Fourier transform 
$$\fourier u(k) 
   = N^{-d} \sum_{x \in \Z_N^d} u(x) \, \euler^{-2\pi\iu \scalar kx}, $$ 
$k \in \{0, \frac{1}{N}, \ldots, \frac{N-1}{N}\}^d$, as 
$$u(x) 
   = \sum_{k \in N^{-1} \Z_N^d} \fourier u(k) \, \euler^{2\pi\iu\scalar kx}. $$ 
Here the ``wave vectors'' $k$ are eventually dense in the unit cell $[0,1]^d$ as the period $N$ becomes larger and larger. In this way one obtains a Fourier descriptions for all periodic functions which, upon truncating the maximal periodicity at finite $N$, lends itself to a directly controllable approximation in numerical simulations.

By a simple coordinate change such an analysis applies to general lattice systems $A \Z^d$, $A \in \GL(d, \R)$, which are easily seen to be the orbit of a single point under the action of a discrete group of translations on $\R^d$. These point sets are of paramount interest in solid state physics where they might describe positions of atoms and molecules. More generally, an Objective Structure can be defined as the orbit of a single point under the action of a general discrete subgroup of the Euclidean group $\E(d)$ on $\R^d$, cf.\ \cite{James2006}. Those structures are thus characterized by the fact that any two points ``see'' an identical environment of other points, modulo a rigid motion. 

One of the aims of the present contribution is to develop an extension of the above described Fourier analysis for discrete translation groups to general discrete groups of Euclidean isometries. While in principle a Fourier transform is defined on the dual space of such a group, an efficient description of these spaces appears to be missing. Moreover, the incorporation of periodic mappings with significant ``long wave-length'' contributions is non-trivial. In fact, due to a possible lack of periodicity, even the definition of a quantity that can be interpreted as a wave-length is not obvious. Our main goal is therefore, by exploiting the special structure of discrete subgroups of $\E(d)$, to provide an efficient and extensive description of their dual spaces. In particular we identify a finite union of convex ``wave vector domains'' for each such space, which reflects the existence of an underlaying translational part of finite index.

We proceed to give a more detailed account of our results and a plan of the paper. In Section~\ref{section:structure} we first collect basic definitions and properties of the Euclidean group and space groups and cite a characterization of discrete subgroups of the Euclidean group: Up to conjugation in $\E(d)$ a discrete subgroup $\G$ of $\E(d)$ embeds into a subgroup of $\O(d_1) \oplus \mathcal{S}$ for a spacegroup $\mathcal{S} \subset E(d_2)$, where $d_1+d_2=d$, with surjective projection $\pi$ onto $\mathcal{S}$, cf.\ Fig.~\ref{fig:group-structure}.  
%
%\begin{center}
\begin{figure}[h!]
\null\hspace{3.3cm}
\begindc{\commdiag}[300]
\obj(0,2)[aa]{$ \G $}
\obj(3,2)[bb]{$ \O(d_1) \oplus \mathcal{S} $}
\obj(6,2)[cc]{$ \O(d_1) \oplus \E(d_2) $} 
\obj(0,0)[dd]{$ \pi(\G) $}
\obj(3,0)[ee]{$ \mathcal{S} $}
\obj(6,0)[ff]{$ \E(d_2) $}
\mor{aa}{bb}{}[\atright,6] % ,1 gibt gestrichelte Linie
\mor{bb}{cc}{}[\atright,6]
\mor{aa}{dd}{$ \pi $}[\atleft,0]
\mor{bb}{ee}{$ \pi $}[\atleft,0]
\mor{cc}{ff}{$ \pi $}[\atleft,0]
\mor{dd}{ee}{}[\atleft, \equalline]
\mor{ee}{ff}{}[\atleft, 6]
\enddc
\caption{\label{fig:group-structure} Structure of a discrete subgroup of $\E(d)$.}
\end{figure}
%\end{center}

Whereas the spacegroup $\mathcal{S}$ has a rich translation group $\T_{\mathcal{S}}$, this is in general not the case for $\G$. To overcome this problem, in Section~\ref{section:sections} we fix a section $\T \subset \G$ of $\T_{\mathcal{S}}$, which, however, will not be a group in general. We also set $\F = \ker(\pi) \cap \G$ and analyze $\T$ and $\F$ in some detail. A first remarkable fact then is that $\T^N$ (set of $N$-fold products of elements in $\T$) for suitable $N \in \N$ is a subgroup and even a normal divisor, see Theorem~\ref{Theorem:Ring}, which is essential in our later considerations on periodic functions. 

In Section~\ref{subsection:dual} we obtain our main result on the structure of representations of $\G$. We consider the subgroup $\T\F$ of $\G$, which corresponds to the set of preimages of $\T_{\mathcal{S}}$ under $\pi$, and analyze in detail the representations of $\T\F$ and the induced representations of $\T\F$ in $\G$. In Theorem~\ref{Theorem:RepSet} we prove that the latter decompose into a finite disjoint union $\bigsqcup_{\rho\in R}\R^{d_2}/\G_\rho$ of orbit spaces (or fundamental domains) of certain space groups $\G_\rho$ on $\R^{d_2}$ via 
\[ (\G_\rho\cdot k,\rho)
   \mapsto\Ind_{\T\F}^\G(\euler^{-2\pi\iu \scalar k\cdot}\rho), \] 
for a suitable finite $R \subset \dual{\T\F}$. This in particular allows for an interpretation of $\bigsqcup_{\rho\in R}\R^{d_2}/\G_\rho$ as a set of ``wave vectors''. A version for periodic representations is given in Theorem~\ref{Theorem:RepSys} where the orbit spaces $\R^{d_2}/\G_\rho$ are replaced by suitable rescalings of $\dualL$, the dual lattice of translations in $\mathcal{S}$. As a result of Theorem~\ref{Theorem:RepSet}, we obtain that -- up to a negligible set -- the whole dual space $\dual\G$ is equal to the same set of induced representations. More precisely, for each $\rho \in R$ there is a zero-set $N_\rho\subset\R^{d_2}/\G_\rho$ (with respect to the Lebesgue-measure) and a zero-set $N\subset\dual\G$ (with respect to the Plancherel-measure on $\dual\G$) such that the above mapping is bijective when restricted to 
\[ \bigsqcup_{\rho\in R}(\R^{d_2}/\G_\rho)\setminus N_\rho\to\dual\G\setminus N, \] 
cf.\ Theorem~\ref{Theorem:Bijektion}. We remark that, as compared to general results in this direction obtained with the Mackey machine (see in particular \cite{Mackey:58,KleppnerLipsman:72} and also cp.\ Theorem~\ref{Theorem:Mackey-machine} below), we obtain an explicit labeling of representations with ``wave vectors'' in a finite union of convex domains in $\R^{d_2}$, which in turn are identified as fundamental domains of associated space groups.

In the following, comparatively elementary Section~\ref{subsection:fourier} we introduce an inner product space of functions that satisfy a suitable periodicity assumption. We then proceed to develop a harmonic analysis on such objects by defining the Fourier transform for both periodic and absolutely summable functions and formulating well-known theorems like the Plancherel formula within our setting.

In Section~\ref{Subsection:SemidirectProduct} we then address the question if $\G$ splits into a translational part (more precisely, some $\T^m \triangleleft \G$) and a finite complement. In general this is not the case. However, our main structural splitting result  \ref{Theorem:semidirectproductG_NNEW} provides such representations as   semidirect products for quotient groups of $\G$ with respect to a series of eventually sparse normal subgroups: If in addition $n\in \N$ is coprime to $m$ and $\abs{\SG/\T_\SG}$, then there is a group $\H\le\G/\T^{nm}$ such that 
\[ \G/\T^{nm}
   = \T^m/\T^{nm}\rtimes \H, \] 
where $\T^m/\T^{nm} \cong \Z_n^{d_2}$. (Note that for space groups and $m = 1$ such a result is mentioned in \cite{Bacry1988,Florek1993}.) 

Finally, Section~\ref{subsection:harmonic} in the appendix collects some well-known definitions and theorems of harmonic analysis like the definition and basic properties of dual spaces and induced representations.

We close this introduction with an outlook to applications on the stability analysis of particle systems. Assuming that particles at different sites interact, one is naturally led to the question if an Objective Structure corresponds to a stable configuration. Similar questions are by now well understood in lattice systems, see, \eg, \cite{Hudson2011}: At equilibrium configurations the second order Taylor approximation of the configurational energy is conveniently analyzed in Fourier space and formulae for stability constants under rather generic interaction assumptions are available \cite{Braun2016}. The results in the current contribution will indeed allow for an analogous characterization of stability constants for Objective Structures. This will be realized in the forthcoming contributions \cite{SchmidtSteinbach:21b,SchmidtSteinbach:21c}, where we provide a characterization that even leads to a numerical algorithm for testing the stability of a given structure and to novel applications to nanotubes.

%-------------------------------------------------------------------
\subsection*{Notation}
We will use the following notation. For all groups $G$ and subsets $S_1,S_2\subset G$ we denote
\[S_1S_2:=\set{s_1s_2}{s_1\in S_1,s_2\in S_2}\subset G\]
the product of group subsets.
For all $S\subset G$, $n\in\Z$ and $g\in G$ we denote
\begin{align*}
S^n&:=\set{s^n}{s\in S}\subset G
\shortintertext{and}
gS&:=\set{gs}{s\in S}\subset G.
\end{align*}
For two groups $G,H$ we write $H<G$ if $H$ is a proper subgroup of $G$ and $H\triangleleft G$ if $H$ is a normal subgroup of $G$.
For a subset $S$ of a group $G$ we write $\angles S$ for the subgroup generated by $S$.

Moreover, let $\N$ be the set of all positive integers $\{1,2,\dots\}$, $\Z_n$ be the group $\Z/(n\Z)$, $e_i$ be the $i^{\text{th}}$ standard coordinate vector $(0,\dots,0,1,\dots,0)\in\R^d$ and $I_n\in\R^{n\times n}$ be the identity matrix of size $n$. We use capital letters for matrices. For $A=(a_{ij})\in\C^{m\times n}$ and $B=(b_{ij})\in\C^{p\times q}$, their direct sum and their Kronecker product are 
\[A\oplus B:=\parens[\bigg]{\begin{matrix}A&0\\0&B\end{matrix}} \in \C^{(m+p)\times (n+q)}, \qquad 
A\otimes B:=\begin{bmatrix}
a_{11}B & \cdots & a_{1n}B\\
\vdots & \ddots & \vdots\\
a_{m1}B & \cdots & a_{mn}B
\end{bmatrix}\in \C^{mp\times nq}, 
\]
respectively. 
The Hermitian adjoint of $A$ is denoted $A^H$. 

%-------------------------------------------------------------------
\subsection*{Acknowledgements}
This work was partially supported by project 285722765 of the Deutsche
Forschungsgemeinschaft (DFG, German Research Foundation). 

%-------------------------------------------------------------------
%-------------------------------------------------------------------
\section{Discrete groups of Euclidean isometries}\label{section:structure}

This preliminary section serves to collect some basics on the Euclidean group acting on $\R^d$ and of its discrete subgroups. We also introduce some general notation. 

%-------------------------------------------------------------------
\subsection*{The Euclidean group}%\label{section:euclidean}
Let $d\in\N$ be the dimension.
We denote the set of all Euclidean distance preserving transformations of $\R^d$ into itself by the \emph{Euclidean group} $\E(d)$.
The elements of $\E(d)$ are called \emph{Euclidean isometries}.
It is well-known that the Euclidean group $\E(d)$ can be described concretely as the outer semidirect product of $\R^d$ and $\O(d)$, the orthogonal group in dimension $d$:
\[\E(d)=\O(d)\ltimes \R^d.\]
The group operation is given by
\[\iso{A_1}{b_1}\iso{A_2}{b_2}=\iso{A_1A_2}{b_1+A_1b_2}\]
for all $\iso{A_1}{b_1},\iso{A_1}{b_2}\in\E(d)$, and the inverse of $\iso{A}{b}\in\E(d)$ is
\[\iso{A}{b}^{-1}=\iso{A^{-1}}{-A^{-1}b}.\]
Moreover, we define the maps
\begin{align*}
\rot\colon\E(d)\to\O(d), \quad 
&\iso Ab\mapsto A \qquad \text{and} \\  
\trans\colon\E(d)\to\R^d, \quad 
&\iso Ab\mapsto b
\end{align*}
and for all $\iso Ab\in\E(d)$ we call $\rot(\iso Ab)$ the \emph{linear component} and $\trans(\iso Ab)$ the \emph{translation component} of $\iso Ab$.
Thus, 
\[g=\iso{I_d}{\trans(g)}\iso{\rot(g)}0\]
for every $g\in\E(d)$. 
We call an Euclidean isometry $\iso Ab$ a \emph{translation} if $A=I_d$.
All translations form the \emph{group of translations} $\Trans(d)$, which is the abelian subgroup of $\E(d)$ given by
\[\Trans(d):=\{I_d\}\ltimes\R^d.\]
We call a set of translations \emph{linearly independent} if their translation components are linearly independent. The natural group action of $\E(d)$ on $\R^d$ is given by
\[\iso Ab\gdot x:=Ax+b\qquad\text{for all }\iso Ab\in \E(d)\text{ and }x\in\R^d.\]
In this contribution we use a calligraphic font for subsets and particularly for subgroups of $\E(d)$.
For every group $\G<\E(d)$ we denote the \emph{orbit} of a point $x\in\R^d$ under the action of the group $\G$ by
\[\G\gdot x:=\set{g\gdot x}{g\in\G}.\]
We endow $\E(d)$ with the subspace topology of the Euclidean space $\R^{d\times d}\times\R^d$ such that $\E(d)$ is a topological group.
It is well-known that a subgroup $\G<\E(d)$ is discrete if and only if for every $x\in\R^d$ the orbit $\G\gdot x$ is discrete, see, \eg, \cite[Exercise I.1.4]{Charlap1986}.
In particular, every finite subgroup of $\E(d)$ is discrete.

A discrete group $\G<\E(d)$ is said to be \emph{decomposable} if the group representation
\begin{align*}
&\G\to\GL(d+1,\C)\\
&(A,b)\mapsto\begin{pmatrix}A&b\\0&1\end{pmatrix}
\end{align*}
is decomposable, \ie, there is a decomposition of $\R^{n+1}$ into the direct sum of two proper subspaces invariant under $\set{\parens{\begin{smallmatrix}A&b\\0&1\end{smallmatrix}}}{\iso Ab\in\G}$. 
% By Maschke's Theorem, since we are working over \C, we can thus treat (in)decomposability as equivalent to (ir)reducibility 
If this is not the case, the discrete group $\G$ is called \emph{indecomposable}, see, \eg, \cite[Appendix A.3]{Brown1978}.
An indecomposable discrete group $\G<\E(d)$ is also called a \emph{($d$-dimensional) space group}.
Below we also present a (well-known) characterization of the space groups and the decomposable discrete subgroups of $\E(d)$, respectively, which does not use representations.

In the physically important case $d=3$, all space groups and discrete decomposable subgroups of $\E(3)$ are well-known and classified, see, \eg, \cite{Aroyo2016} and \cite{Opechowski1986}, respectively. %Opechowski -> Section 7.4 and Chapter 11
%

%-------------------------------------------------------------------
\subsection*{Space groups}%\label{section:SpaceGroups}
The following theorem is well-known, see, \eg, \cite[Appendix A.3]{Brown1978}.
\begin{Theorem}\label{Theorem:spacegroup}
Let $d\in\N$ be the dimension and $\G < \E(d)$ a discrete subgroup. The following are equivalent: 
\begin{enumerate}
\item $\G$ is a space group. 
\item $\G$ contains $d$ linearly independent translations. 
\item The subgroup of translations of $\G$ is generated by $d$ linearly independent translations.
\end{enumerate}
\end{Theorem}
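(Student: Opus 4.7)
The equivalence is proved by a three-step cycle. The implication (iii) $\Rightarrow$ (ii) is immediate. For (ii) $\Rightarrow$ (iii) I would invoke the classical classification of discrete subgroups of $(\R^d,+) \cong \Trans(d)$ as free abelian groups of rank at most $d$: applied to the discrete subgroup $\G \cap \Trans(d)$, the hypothesis that it contains $d$ linearly independent elements forces its rank to be exactly $d$, so any $\Z$-basis consists of $d$ linearly independent translations that generate it.

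The remaining equivalence (i) $\Leftrightarrow$ (ii) is the substantive part. For (ii) $\Rightarrow$ (i) I would argue by contradiction. Given linearly independent translations $\iso{I_d}{b_1},\dots,\iso{I_d}{b_d} \in \G$ and a proper invariant direct sum $\R^{d+1} = W_1 \oplus W_2$, at least one summand, say $W_1$, must contain a vector $(x_0,t_0)$ with $t_0 \neq 0$ (otherwise both sit inside $\R^d \times \{0\}$ and cannot span $\R^{d+1}$). Applying the matrix $\begin{pmatrix}I_d & b_i \\ 0 & 1\end{pmatrix}$ and subtracting yields $(t_0 b_i, 0) \in W_1$ for each $i$; since the $b_i$ span $\R^d$ and $t_0 \ne 0$ this forces $\R^d \times \{0\} \subset W_1$, and combined with $(x_0,t_0)$ gives $W_1 = \R^{d+1}$, contradicting properness.

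For (i) $\Rightarrow$ (ii) I would argue the contrapositive and construct a decomposition explicitly. Assume $V := \spano \trans(\G \cap \Trans(d))$ is a proper subspace of $\R^d$. Since $\iso{A}{b}\iso{I_d}{c}\iso{A}{b}^{-1} = \iso{I_d}{Ac}$ for $\iso{I_d}{c} \in \G \cap \Trans(d)$ and $\iso{A}{b} \in \G$, the subspace $V$ is $\rot(\G)$-invariant; and since $\rot(\G) \subset \O(d)$, so is $V^\perp$. My candidate for the decomposition is $W_2 := V^\perp \times \{0\}$, which is manifestly invariant, together with $W_1 := \{(v + tc_0, t) : v \in V,\, t \in \R\}$ for a suitably chosen $c_0 \in V^\perp$. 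A short calculation shows that $\begin{pmatrix}A & b \\ 0 & 1\end{pmatrix}$-invariance of $W_1$ for every $\iso{A}{b} \in \G$ is equivalent to $c_0$ being a common fixed point of the induced affine action $\iso{A|_{V^\perp}}{b_{V^\perp}}$ of $\G$ on $V^\perp$.

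The main obstacle is thus producing such a common fixed point. The plan is to argue in two steps. First, the induced action on $V^\perp$ carries no non-trivial translations: if some $\iso{A}{b} \in \G$ had $A|_{V^\perp} = I_{V^\perp}$ and $b_{V^\perp} \neq 0$, then a suitable power $\iso{A}{b}^n$ combined with elements of $\G \cap \Trans(d)$ would produce an honest translation in $\G$ with non-zero $V^\perp$-component, contradicting the maximality of $V$. Second, discreteness of $\G$ together with the compactness of $\O(V^\perp)$ should force the induced group to be finite, and averaging over any orbit then supplies the desired fixed point $c_0$. I expect the finiteness step to be the most delicate, because in general the image of a discrete group under a projection need not remain discrete; the natural route is a Bieberbach-style commutator estimate ensuring that rotation parts of elements of $\G$ close to the identity must commute, which allows one to pass to a finite-index subgroup where the rotation part is controlled and then apply the averaging cleanly.
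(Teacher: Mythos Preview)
The paper does not prove this theorem but records it as well-known with a reference to \cite[Appendix~A.3]{Brown1978}, so there is no in-paper argument to compare against. Your treatment of (iii)$\Rightarrow$(ii), (ii)$\Rightarrow$(iii) and (ii)$\Rightarrow$(i) is fine; the gap is in your contrapositive for (i)$\Rightarrow$(ii). You propose $W_2=V^\perp\times\{0\}$ together with $W_1=\{(v+tc_0,t):v\in V,\ t\in\R\}$ for a common fixed point $c_0\in V^\perp$ of the induced affine action, to be produced by showing that the induced group on $V^\perp$ is finite and then averaging. But that finiteness (your Step~2) can genuinely fail, and then no such $c_0$ exists at all.

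The helical group $\G=\langle g\rangle$ with $g=\gplus{R(\alpha)}{\iso{I_1}{1}}\in\E(3)$, $\alpha\notin 2\pi\Q$ (Example~\ref{Example:HelicalGroup}) is a counterexample: it contains no nontrivial translation, so $V=\{0\}$, $V^\perp=\R^3$, and the induced action on $V^\perp$ is the original infinite action, which has no global fixed point since the third coordinate shifts by~$1$ under $g$. Hence $\R^3\times\{0\}$ admits no $1$-dimensional invariant complement in $\R^4$, and passing to a finite-index subgroup does not help because every such subgroup is again helical. The group is nevertheless decomposable, via $(\R^2\times\{0\}^2)\oplus(\{0\}^2\times\R^2)$ --- a splitting coming from the reducibility of the \emph{linear} part $R(\alpha)\oplus I_1$, not from any affine fixed point. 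The Bieberbach-type argument you allude to is indeed the right ingredient, but its output is an invariant splitting of $\R^d$ via the identity component of $\overline{\rot(\G)}\subset\O(d)$, not finiteness of the projected group in $\E(V^\perp)$; your fixed-point ansatz covers only the special case where $\rot(\G)|_{V^\perp}$ is already finite.
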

Also the following theorem is well-known.
\begin{Theorem}\label{Theorem:SpaceGroup}
Let $\G$ be a $d$-dimensional space group and $\T$ its subgroup of translations.
Then it holds:
\begin{enumerate}
\item\label{item:aaaa} The group $\T$ is a normal subgroup of $\G$ and isomorphic to $\Z^d$.
\item\label{item:bbbb} The \emph{point group} $\rot(\G)$ of $\G$ is finite.
\item\label{item:cccc} The map
\[\G/\T\to\rot(\G),\quad\iso Aa\T\mapsto A\]
is bijective and particularly, also $\G/\T$ is finite.
\end{enumerate}
\end{Theorem}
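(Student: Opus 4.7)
The plan is to establish the three claims in order, exploiting the conjugation formula $g\iso{I_d}{t}g^{-1}=\iso{I_d}{\rot(g)\+t}$ valid for every $g\in\E(d)$ and every translation $\iso{I_d}{t}$. This identity is the workhorse of the proof: it immediately shows that $\T$ is stable under conjugation by any element of $\G$, which is the normality statement in (i). For the isomorphism $\T\cong\Z^d$, I would identify $\T$, via the translation component $\trans$, with an additive subgroup of $\R^d$. This subgroup is discrete because $\G$ itself is discrete, and by Theorem~\ref{Theorem:spacegroup} it contains $d$ linearly independent elements. Invoking the classical structure theorem that every discrete subgroup of $\R^d$ is free abelian of rank equal to the dimension of its $\R$-span then yields the claim.

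For (ii) I would combine the conjugation formula with (i). Since $\rot(g)\+t\in\trans(\T)$ for every $\iso{I_d}{t}\in\T$, the point group $\rot(\G)$ acts on the lattice $\trans(\T)\subset\R^d$ by orthogonal automorphisms. Fixing a basis of $\trans(\T)$, each element of $\rot(\G)$ is determined by its action on finitely many lattice vectors; since an orthogonal map preserves lengths, such a map can only permute the (finite) set of lattice vectors whose norm does not exceed the length of any basis vector, and these vectors span $\R^d$. Consequently $\rot(\G)$ embeds into a finite permutation group and is itself finite.

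Claim (iii) I would then read off the first isomorphism theorem applied to the restriction of $\rot$ to $\G$. The map $\iso Aa\mapsto A$ is a homomorphism $\G\to\O(d)$ with image $\rot(\G)$ by definition. Its kernel consists of those $\iso{I_d}{a}\in\G$ with $a\in\R^d$, which is exactly the subgroup $\T$ of translations; hence it descends to a well-defined bijection $\G/\T\to\rot(\G)$. Combining this with (ii) gives finiteness of $\G/\T$.

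The only step that requires a small amount of care is the argument inside (ii) that $\rot(\G)$ is finite; everything else is formal. I expect no real obstacle, since the finiteness of the orthogonal group stabilizing a full-rank lattice is a standard fact that could alternatively be obtained by noting that $\rot(\G)$ is a discrete (being a subgroup of the discrete quotient $\G/\T$) and compact (being a closed subgroup of $\O(d)$) subset of $\O(d)$, hence finite.
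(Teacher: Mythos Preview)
Your proof is correct and supplies exactly the standard arguments that the paper defers to references: the paper simply invokes Theorem~\ref{Theorem:spacegroup} for \ref{item:aaaa}, cites \cite[Theorem I.3.1]{Charlap1986} for \ref{item:bbbb}, and calls \ref{item:cccc} ``easy''. Your conjugation identity for normality, the structure theorem for discrete subgroups of $\R^d$, the lattice-stabilizer argument for finiteness of the point group, and the first isomorphism theorem for \ref{item:cccc} are precisely what those citations unpack, so there is no substantive difference in approach.

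One small caution about your parenthetical alternative for \ref{item:bbbb}: the phrase ``being a subgroup of the discrete quotient $\G/\T$'' is not accurate---$\rot(\G)$ is isomorphic to $\G/\T$ via the map in \ref{item:cccc}, not a subgroup of it, and at this point \ref{item:cccc} has not yet been established. Even granting the group isomorphism, discreteness of $\G/\T$ in the quotient topology does not automatically give discreteness of $\rot(\G)$ in the subspace topology from $\O(d)$, and closedness of $\rot(\G)$ in $\O(d)$ would also need an argument. Since your main lattice-stabilizer argument is self-contained and sound, this does not affect the validity of the proof; simply drop the alternative or tighten its justification.
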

\begin{proof}
\ref{item:aaaa} This is clear by Theorem~\ref{Theorem:spacegroup}.
\ref{item:bbbb} See, \eg, \cite[Theorem I.3.1]{Charlap1986}.
\ref{item:cccc} It is easy to see that the map is bijective and by \ref{item:bbbb} the set $\G/\T$ is finite.
\end{proof}
\begin{Corollary}
Let $\G$ be a $d$-dimensional space group and $\T$ its subgroup of translations.
Then for all $N\in\N$ the set $\T^N$ is a normal subgroup of $\G$ and isomorphic to $\Z^d$.
\end{Corollary}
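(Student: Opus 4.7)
The corollary is essentially a direct consequence of Theorem~\ref{Theorem:SpaceGroup}\ref{item:aaaa}, so the plan is very short.

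First I would invoke Theorem~\ref{Theorem:SpaceGroup}\ref{item:aaaa} to regard $\T$ as an abelian normal subgroup of $\G$ with a fixed isomorphism $\T\cong\Z^d$. Under this isomorphism, the $N$-th power map $t\mapsto t^N$ corresponds to multiplication by $N$ on $\Z^d$, so that $\T^N$ corresponds to $N\Z^d$. Since $N\Z^d$ is a subgroup of $\Z^d$ isomorphic to $\Z^d$, the same is true for $\T^N\le\T$, which gives the subgroup property together with $\T^N\cong\Z^d$.

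For normality in $\G$, I would use that $\T$ itself is normal in $\G$, so that for every $g\in\G$ and every $t\in\T$ one has $gtg^{-1}\in\T$. Because raising to the $N$-th power is compatible with conjugation,
\[ g\+t^N\+g^{-1}=\parens*{g\+t\+g^{-1}}^N\in\T^N, \]
which shows $g\+\T^N\+g^{-1}\subset\T^N$ and hence the normality of $\T^N$ in $\G$.

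There is no real obstacle: the only point that deserves a word of justification is that $\T^N$ is closed under the group operation, which relies on $\T$ being abelian (so that $(st)^N=s^Nt^N$), and this is already built into the identification $\T\cong\Z^d$ from Theorem~\ref{Theorem:SpaceGroup}\ref{item:aaaa}. I would therefore keep the proof to a couple of lines and refer back to that theorem.
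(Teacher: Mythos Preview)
Your proposal is correct and matches the paper's approach exactly: the paper's proof is the single line ``This is clear by Theorem~\ref{Theorem:SpaceGroup}\ref{item:aaaa},'' and you have simply unpacked the obvious details (that $\T^N$ corresponds to $N\Z^d\cong\Z^d$ under $\T\cong\Z^d$, and that $g t^N g^{-1}=(gtg^{-1})^N\in\T^N$ since $\T\triangleleft\G$).
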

\begin{proof}
This is clear by Theorem~\ref{Theorem:SpaceGroup}\ref{item:aaaa}.
\end{proof}
%

%-------------------------------------------------------------------
\subsection*{Discrete subgroups of the Euclidean group}%\label{section:DecomposableDiscreteSubgroups}
We recall that two subgroups $\G_1,\G_2<\E(d)$ are termed \emph{conjugate} subgroups under the group $\E(d)$ if there exists some $g\in\E(d)$ such that $g^{-1}\G_1 g=\G_2$.
Note that every such conjugation corresponds to a coordinate transformation in $\R^d$.

Now we may state the following well-known characterization of the discrete subgroups of $\E(d)$.
For this purpose for all $d_1,d_2\in\N$ we define the group homomorphism
\begin{align*}
\gplus{}{}\colon&\O(d_1)\times\E(d_2)\to\E(d_1+d_2)\\
&(A_1,\iso{A_2}{b_2})\mapsto \gplus{A_1}{\iso{A_2}{b_2}}:=\iso[\bigg]{\parens[\bigg]{\begin{matrix}A_1&0\\0&A_2\end{matrix}}}{\parens[\bigg]{\begin{matrix}0\\b_2\end{matrix}}}.
\end{align*}
\begin{Theorem}\label{Theorem:Browndecomposablediscretegroup}
Let $d\in\N$ be the dimension and $\G<\E(d)$ be discrete.
Then there exist $d_1,d_2\in\N_0$ such that $d=d_1+d_2$, a $d_2$-dimensional space group $\SG$ and a discrete group $\G'<\gplus{\O(d_1)}{\SG}$ such that $\G$ is conjugate under $\E(d)$ to $\G'$ and $\pi(\G')=\SG$, where $\pi$ is the natural surjective homomorphism $\gplus{\O(d_1)}{\E(d_2)}\to\E(d_2)$, $\gplus Ag\mapsto g$.
\end{Theorem}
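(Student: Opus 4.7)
The plan is to construct the desired decomposition in three stages: identify the translation sublattice of $\G$ and the $d_2$-dimensional subspace $V$ it spans, conjugate by an orthogonal isometry so that $V$ and $V^\perp$ become coordinate subspaces and each $\rot(g)$ takes block-diagonal form, and then conjugate by a translation to eliminate the $V^\perp$-components of all translations arising in $\G$.

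For the first stage, set $\T_\G := \G \cap \Trans(d)$; as a discrete subgroup of $\Trans(d) \cong \R^d$ it is free abelian of some rank $d_2 \le d$. Let $V$ be the $\R$-span of $\trans(\T_\G)$ and set $d_1 := d - d_2$. Since $\Trans(d) \triangleleft \E(d)$, the subgroup $\T_\G$ is normal in $\G$; the identity $\iso{A}{b}\iso{I_d}{v}\iso{A}{b}^{-1} = \iso{I_d}{Av}$ yields $\rot(g) V \subseteq V$ for every $g \in \G$, and orthogonality gives $\rot(g) V^\perp \subseteq V^\perp$. For the second stage, conjugate $\G$ by an element of $\O(d) \hookrightarrow \E(d)$ mapping $V^\perp$ and $V$ respectively to $\R^{d_1}\times\{0\}$ and $\{0\}\times\R^{d_2}$; then $\rot(g) = A_1(g) \oplus A_2(g)$ with $A_i(g) \in \O(d_i)$ and $\trans(\T_\G) \subseteq \{0\}\times\R^{d_2}$.

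The third stage is the crux: to find $c \in \R^{d_1}$ such that $(I_{d_1} - A_1(g))c = -b_1(g)$ for every $g \in \G$, where $b_1(g)$ denotes the $\R^{d_1}$-part of $\trans(g)$; equivalently, $c$ should be a fixed point of the affine action of $\phi_1(\G) := \{(A_1(g), b_1(g)) : g \in \G\} \subseteq \E(d_1)$ on $\R^{d_1}$. I would establish the existence of such $c$ via a chain of subclaims. First, $A_2(\G)$ permutes the lattice $\trans(\T_\G)$ isometrically and therefore lies in the finite group $\Aut(\trans(\T_\G)) \cap \O(d_2)$, of order say $k$. Second, if $\phi_1(g) = (I_{d_1}, b)$ then $\rot(g^k) = I_d$ forces $g^k \in \T_\G$, while $\trans(g^k)|_{V^\perp} = k \cdot b$, so $b = 0$ and $\phi_1(\G) \cap \Trans(d_1) = \{e\}$. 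Third, given any sequence with $\phi_1(g_n) \to (I_{d_1}, 0)$, pre-multiply each $g_n$ by an element of $\T_\G \subseteq \ker \phi_1$ so that $\trans(g_n)|_V$ lies in a fixed bounded fundamental domain for $\trans(\T_\G)$ in $V$; the modified sequence has rotations in compact $\O(d)$ and bounded translations, hence lies in a compact subset of $\E(d)$, so discreteness of $\G$ forces it to take only finitely many values, making $\phi_1(g_n)$ eventually constant and therefore equal to $(I_{d_1}, 0)$. Fourth, it then follows that $\rot$ injects the discrete group $\phi_1(\G)$ into the compact group $\O(d_1)$, so $\phi_1(\G)$ is finite, and averaging any orbit produces the desired fixed point $c$.

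The main obstacle is the third substep, asserting discreteness of $\phi_1(\G)$ in $\E(d_1)$: it requires combining discreteness of $\G$, cocompactness of $\T_\G$ in its $\R$-span, and finiteness of $A_2(\G)$, since in general $\rot(\G)$ need not be discrete in $\O(d)$ and hence the image under $\phi_1$ is not discrete for abstract reasons. Once $c$ is obtained and used to conjugate $\G$ into $\gplus{\O(d_1)}{\E(d_2)}$, set $\SG := \pi(\G) \subseteq \E(d_2)$. The image $\pi(\T_\G)$ contributes $d_2$ linearly independent translations to $\SG$, and discreteness of $\SG$ in $\E(d_2)$ is inherited from $\G$ via compactness of $\O(d_1)$; Theorem~\ref{Theorem:spacegroup} then identifies $\SG$ as a $d_2$-dimensional space group, and $\pi(\G) = \SG$ holds by construction.
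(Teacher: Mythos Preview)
Your approach has a genuine gap: taking $d_2 = \rank(\G \cap \Trans(d))$ is not the correct invariant, and this causes your fourth subclaim to fail. The helical groups of Example~\ref{Example:HelicalGroup} are a concrete counterexample. For $\alpha\in\R\setminus 2\pi\Q$ the group $\G = \angles[\big]{\gplus{R(\alpha)}{\iso{I_1}{1}}} < \E(3)$ is infinite and discrete yet contains no nontrivial translation, so your construction yields $d_2 = 0$, $V = \{0\}$, $d_1 = 3$, and $\phi_1$ is the identity on $\G$. Your fourth subclaim then asserts that $\phi_1(\G) = \G$ is finite, which is false. The precise error is the inference ``$\rot$ injects the discrete group $\phi_1(\G)$ into the compact group $\O(d_1)$, so $\phi_1(\G)$ is finite'': an injective homomorphism from a discrete group into a compact group need not have discrete image, and here $\rot(\G) = \angles{R(\alpha)\oplus I_1}$ is infinite and dense in a one-dimensional torus inside $\O(3)$.

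The correct $d_2$ is the dimension in which $\G$-orbits are unbounded (equivalently, the rank of $\G$ as a virtually abelian group), which for the helical group equals $1$, not $0$; the associated subspace $V$ must contain the screw axes of elements of infinite order, not merely the pure translations. Pinning down this subspace and showing that the action on its orthogonal complement admits a global fixed point is exactly the nontrivial content of the theorem. The paper does not argue this from scratch but defers to \cite[A.4 Theorem~2]{Brown1978} for the infinite decomposable case and to \cite[Section~4.12]{Opechowski1986} for the finite case.
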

\begin{proof}
Let $d\in\N$ be the dimension and $\G<\E(d)$ be discrete.
If $\G$ is a space group, the assertion is trivial.
If $\G$ is finite, then $\G$ is conjugate under $\E(d)$ to a finite subgroup of $\O(d)\ltimes \{0_d\}\cong\O(d)$, see, \eg, \cite[Section 4.12]{Opechowski1986}.
If $\G$ is an infinite decomposable discrete subgroup of $\E(d)$, the assertion is proven in \cite[A.4 Theorem 2]{Brown1978}.
\end{proof}
\begin{Remark}
Here $\gplus{\O(d_1)}{\SG}$ is understood to be $\O(d)$ if $d_1=d$ and to be $\SG$ if $d_1=0$.
\end{Remark}

%-------------------------------------------------------------------
%-------------------------------------------------------------------
\section{Translational sections}\label{section:sections}

Our first aim will be to efficiently describe the discrete group $\G<\E(d)$ in terms of the range $\SG$ and the kernel $\F$ of $\pi|_\G$. An important step will be to fix and analyze a section $\T \subset \G$ of the translation group $\T_\SG$ of $\SG$. The main result in this section is Theorem~\ref{Theorem:Ring} which characterizes $m\in\N$ for which $\T^m \triangleleft \G$. 
\begin{Definition}\label{Definition:discreteisometrygroup}
Let $d\in\N$ be the dimension.
Let $d_1,d_2\in\N_0$ be such that $d=d_1+d_2$.
Let $\SG$ be a $d_2$-dimensional space group.
Let $\G<{\gplus{\O(d_1)}{\SG}}$ be discrete such that $\pi(\G)=\SG$, where $\pi$ is the natural surjective homomorphism $\gplus{\O(d_1)}{\E(d_2)}\to\E(d_2)$, $\gplus Ag\mapsto g$.
Let $\F$ be the kernel of $\pi|_\G$ and $\T_\SG$ be the subgroup of translations of $\SG$.
Let $\T\subset\G$ such that the map $\T\to\T_\SG$, $g\mapsto \pi(g)$ is bijective.
\end{Definition}
\begin{Remark}%\label{Remark:Definition}
\begin{enumerate}
\item By Theorem~\ref{Theorem:Browndecomposablediscretegroup} for every discrete group $\G'<\E(d)$ there exists some discrete group $\G$ as in Definition~\ref{Definition:discreteisometrygroup} such that $\G$ is conjugate to $\G'$ under $\E(d)$.
\item If $d_1=0$, we have $d_2=d$, $\G=\SG$, $\T=\T_\SG$ and $\F=\{\id\}$.
If $d_1=d$, we have $d_2=0$, $\G$ is finite, $\G=\F$ and $\T=\{\id\}$.
\item The quantities $d$, $d_1$, $d_2$, $\F$, $\SG$ and $\T_\SG$ are uniquely defined by $\G$.
In general for given $\G$ there is no canonical choice for $\T$, see Example~\ref{Example:choiceofD}.
\item Let $\G$ be given.
In general, for every choice of $\T$ the set $\T$ is not a subset of $\Trans(d)$, see Example~\ref{Example:HelicalGroup}.
Moreover, in general for every choice of $\T$ the set $\T$ is not a group and the elements of $\T$ do not commute, see Example~\ref{Example:choiceofD4}.
\item Let $\G$ be given.
One possible choice for $\T$ is the following.
Let $t_1,\dots,t_{d_2}\in\T_\SG$ be such that $\{t_1,\dots,t_{d_2}\}$ generates $\T_\SG$.
For all $i\in\{1,\dots,d_2\}$ let $g_i\in\G$ such that $\pi(g_i)=t_i$.
Upon this, we define
\[\T=\set{g_1^{n_1}\dots g_{d_2}^{n_{d_2}}}{n_1,\dots,n_{d_2}\in\Z}.\]
\end{enumerate}
\end{Remark}
For the following examples for all angles $\alpha\in\R$ we define the rotation matrix
%
%\label{eq:RotationMatrix}
\[ R(\alpha):=\parens[\bigg]{\begin{matrix}\cos(\alpha)&-\sin(\alpha)\\ \sin(\alpha)&\cos(\alpha)\end{matrix}}\in\O(2).
\]
\begin{Example}[Helical groups]\label{Example:HelicalGroup}
Let $d_1=2$, $d_2=1$, $\alpha\in\R$ be an angle, $n\in\N$,
\[\T=\angles[\Big]{\gplus{R(\alpha)}{\iso{I_1}1}},\quad
\F=\angles[\Big]{\gplus{R(2\pi/n)}{\iso{I_1}0}}\quad\text{and}\quad\P=\angles[\Big]{\parens[\big]{\begin{smallmatrix}1&0\\0&-1\end{smallmatrix}}\oplus\iso{-I_1}0}.\]
Then $\T$ is isomorphic to $\Z$, $\F$ is a cyclic group of order $n$, $\P$ is a group of order 2 and $\F\P$ a dihedral group of order $2n$.
Moreover, $\T$, $\T\F$, $\T\P$ and $\T\F\P$ are decomposable discrete subgroups of $\E(3)$.
If we have $\alpha\in\R\setminus(2\pi\Q)$, the groups $\T$, $\T\F$, $\T\P$ and $\T\F\P$ are so-called \emph{helical groups}, \ie infinite discrete subgroups of the Euclidean group $\E(3)$ which do not contain any translation except the identity.
%, see, \eg, \cite[Chapter 7.4, Theorem 11.13]{Opechowski1986}.
%
\end{Example}
\begin{Example}[The choice of $\T$ is not unique.]\label{Example:choiceofD}
Let $t=\iso{I_1}1$, $\F_0=\{I_2,R(\pi)\}$, $\SG=\T_\SG=\angles t$ and
\[\G=\set[\Big]{\gplus{(R(n\pi/2)F)}{t^n}}{ n\in\Z, F\in \F_0}<\E(3).\]
Then the choice $\gplus{R(\pi/2)}t\in\T$ as well as $\gplus{R(3\pi/2)}t\in\T$ is possible.
In particular, the choice of $\T$ is not unique.
\end{Example}
\begin{Example}\label{Example:choiceofD4}
We present a discrete group $\G<\E(8)$ such that for every choice of $\T$ the set $\T$ is not a group  and the elements of $\T$ do not commute.

Let $\alpha_1,\alpha_2\in\R\setminus(2\pi\Q)$ be angles,
$R_1=R(\alpha_1)$,
$R_2=R(\alpha_2)$,
$R_3=R(\pi/2)$,
$S=(\begin{smallmatrix}1&0\\0 &-1\end{smallmatrix})$,
$t_1=\iso{I_2}{e_1}$ and
$t_2=\iso{I_2}{e_2}$.
Then we have $\angles{R_1}\cong\Z$, $\angles{R_2}\cong\Z$, and $\langle R_3,S\rangle<\O(2)$ is a dihedral group.
Let $\SG=\T_\SG=\set{t_1^{n_1}t_2^{n_2}}{n_1,n_2\in\Z}$,
\[\G:=\set[\bigg]{\gplus{\parens[\Big]{R_1^{n_1}\oplus R_2^{n_2}\oplus(S^{n_1}R_3^{n_2+m})}}{\parens[\big]{t_1^{n_1}t_2^{n_2}}}}{ n_1,n_2\in\Z,m\in\{0,2\}}<\E(8)\]
and $\pi\colon\G\to\SG$ be the natural surjective homomorphism with kernel $\F=\{\id,\gplus{(I_4\oplus R_3^2)}{\id_{\E(2)}}\}$.
Let $\T\subset\G$ such that the map $\T\to\T_\SG$, $g\mapsto\pi(g)$ is bijective.
Since $t_1,t_2\in\T_\SG$, there exist $m_1,m_2\in\{0,2\}$ such that $t_1':=\gplus{(R_1\oplus I_2\oplus (SR_3^{m_1}))}{t_1}\in\T$ and $t_2':=\gplus{(I_2\oplus R_2\oplus R_3^{1+m_2})}{t_2}\in\T$.
We have $t_1't_2'\neq t_2't_1'$ since
\begin{equation}\label{eq:uxw}
t_1't_2'(t_1')^{-1}(t_2')^{-1}=\gplus{\parens[\big]{I_4\oplus(SR_3^{m_1}R_3^{1+m_2}R_3^{-m_1}SR_3^{-1-m_2})}}{\id_{\E(2)}}=\gplus{(I_4\oplus R_3^2)}{\id_{\E(2)}}.
\end{equation}
Thus, the elements of $\T$ do not commute.

Now we suppose that $\T$ is a group.
Since $\pi^{-1}(\id_{\E(2)})=\F$ and by \eqref{eq:uxw}, we have $\pi^{-1}(\id_{\E(2)})\subset\T$.
This contradicts the claim that $\pi|_\T$ is bijective.
Thus, $\T$ is not a group.
\end{Example}
For the remainder of this section we fix the dimension $d\in\N$, the discrete group $\G<\E(d)$ and  the quantities $d_1$, $d_2$, $\T$, $\F$, $\SG$, $\T_\SG$ as introduced by Definition~\ref{Definition:discreteisometrygroup}.
The following lemma collects some elementary properties.
\begin{Lemma}
\begin{enumerate}
\item The group $\F$ is finite.
\item\label{item:abc} For all $n\in\N$ the set $\T^n\F$ is independent of the choice of $\T$, and it holds
\[\T^n\F\triangleleft\G.\]
In particular, it holds $\T\F\triangleleft\G$.
\item The map $\G/\T\F\to\SG/\T_\SG$, $g\T\F\mapsto\pi(g)\T_\SG$ is a group isomorphism, where $\pi\colon\G\to\SG$ is the natural surjective homomorphism with kernel $\F$.
In particular, $\G/\T\F$ is finite.
\item\label{item:abcd} For all $n\in\N$ the map $\T_\SG\to\T^n\F/\F$, $t\mapsto\phi(t^n)\F$ is a group isomorphism, where $\phi\colon\T_\SG^n\to\T^n$ is the canonical bijection.
In particular, the group $\T\F/\F$ is commutative.
\item For all $n\in\Z\setminus\{0\}$ the map $\T\to\T^n$, $t\mapsto t^n$ is bijective.
\end{enumerate}
\label{Lemma:TFP}\end{Lemma}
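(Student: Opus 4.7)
\emph{Approach.} The unifying idea is the identity $\T^n\F=\pi^{-1}(\T_\SG^n)$: once this is spotted, every assertion in the lemma becomes a transparent consequence of what we already know about the space group $\SG$ and its translation lattice $\T_\SG\cong\Z^{d_2}$ (Theorem~\ref{Theorem:SpaceGroup}). The structural inputs I will repeatedly use are: $\F=\ker(\pi|_\G)$ is normal in $\G$ by construction; $\pi|_\T\colon\T\to\T_\SG$ is a bijection by definition of the section; and $\T_\SG$ is a finitely generated free abelian group that is normal in $\SG$.

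Part~(i) and part~(v) are the easiest. For (i) I would note that $\ker\pi$ inside $\gplus{\O(d_1)}{\E(d_2)}$ is $\gplus{\O(d_1)}{\{\id_{\E(d_2)}\}}$, a compact group, so $\F=\G\cap\ker\pi$ is a discrete subgroup of a compact group and hence finite. For (v) surjectivity of $t\mapsto t^n$ onto $\T^n$ holds by definition; for injectivity, $t_1^n=t_2^n$ implies $\pi(t_1)^n=\pi(t_2)^n$ in $\T_\SG\cong\Z^{d_2}$, so for $n\neq 0$ we get $\pi(t_1)=\pi(t_2)$, and injectivity of $\pi|_\T$ finishes.

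For (ii) I would prove the identity $\T^n\F=\pi^{-1}(\T_\SG^n)$ by two inclusions: the inclusion $\subset$ is immediate since $\pi(\T^n\F)=\T_\SG^n$; for $\supset$ take $g\in\pi^{-1}(\T_\SG^n)$, write $\pi(g)=\pi(t)^n$ with $t=(\pi|_\T)^{-1}(\pi(g)^{1/n})$ (well-defined in $\T_\SG$ because $\T_\SG$ is torsion-free), and note $gt^{-n}\in\F$. Now $\T_\SG^n=n\T_\SG$ is a characteristic subgroup of $\T_\SG$ and $\T_\SG\triangleleft\SG$ by Theorem~\ref{Theorem:SpaceGroup}\ref{item:aaaa}, hence $\T_\SG^n\triangleleft\SG$; its preimage under the surjective homomorphism $\pi\colon\G\to\SG$ is therefore a normal subgroup of $\G$ that is manifestly independent of the choice of $\T$. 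Part~(iii) is then the composition of $\pi\colon\G\to\SG$ with the quotient $\SG\to\SG/\T_\SG$: the combined kernel is $\pi^{-1}(\T_\SG)=\T\F$, the combined map is surjective, and $\SG/\T_\SG$ is finite by Theorem~\ref{Theorem:SpaceGroup}.

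Finally for (iv), applying (ii) with $n=1$ gives $\T\F=\pi^{-1}(\T_\SG)$ and hence an induced isomorphism $\overline{\pi}\colon\T\F/\F\xrightarrow{\sim}\T_\SG$, which restricts to an isomorphism $\T^n\F/\F\xrightarrow{\sim}\T_\SG^n$. Its inverse sends $s\in\T_\SG^n$ to $\phi(s)\F$, since $\phi(s)\in\T^n\subset\T\F$ projects to $s$ under $\pi$. Pre-composing with the group isomorphism $\T_\SG\to\T_\SG^n$, $t\mapsto t^n$ (an isomorphism for $n\neq 0$ because $\T_\SG\cong\Z^{d_2}$ is free abelian) yields precisely the map $t\mapsto\phi(t^n)\F$ of the statement, and commutativity of $\T\F/\F$ falls out of the $n=1$ case since $\T_\SG$ is abelian. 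The only place where one might anticipate difficulty is in checking that $\phi(t^n)\F$ behaves multiplicatively despite the non-commutativity of elements of $\T$ highlighted in Example~\ref{Example:choiceofD4}; the reformulation $\T^n\F=\pi^{-1}(\T_\SG^n)$ bypasses this concern entirely by transferring the computation to the abelian group $\T_\SG$.
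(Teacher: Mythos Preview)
Your proof is correct and follows essentially the same approach as the paper: both hinge on the identity $\T^n\F=\pi^{-1}(\T_\SG^n)$ and then read off each item from the known structure of $\SG$ and $\T_\SG$. Your write-up is a bit more explicit in verifying the two inclusions for this identity and in factoring the map in (iv) as $\phi_2^{-1}\circ\phi_1$ (which is exactly what the paper does), so there is no substantive difference.
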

\begin{proof}
Let $\pi\colon\G\to\SG$ be the natural surjective homomorphism with kernel $\F$.
\begin{enumerate}
\item Since $\G$ is discrete, the group $\F$ is discrete.
Moreover, $\F$ is a subgroup of $\gplus{\O(d_1)}{\{\id_{\E(d_2)}\}}$.
Thus, the group $\F$ is finite.
\item Let $n\in\N$.
The set $\T^n\F$ is the preimage of $\T_\SG^n$ under $\pi$.
Since $\T_\SG^n$ is a normal subgroup of $\SG$, the set $\T^n\F$ is a normal subgroup of $\G$.
\item This is clear, since $\T\F$ is the preimage of $\T_\SG$ under $\pi$.
\item Let $n\in\N$.
Since $\T_\SG$ is isomorphic to $\Z^{d_2}$, the map $\phi_1\colon\T_\SG\to\T_\SG^n$, $t\mapsto t^n$ is a group isomorphism.
Since $\F$ is the kernel of $\pi$ and $\T^n\F$ the preimage of $\T_\SG^n$ under $\pi$, the map $\phi_2\colon\T^n\F/\F\to\T_\SG^n$, $g\F\mapsto\pi(g)$ is an isomorphism.
This implies the assertion, \ie the map $\phi_2^{-1}\circ\phi_1$ is an isomorphism.
\item Let $n\in\Z\setminus\{0\}$.
The map $\psi\colon\T\to\T^n$, $t\mapsto t^n$ is surjective.
Since the map $\T_\SG\to\T_\SG^n$, $t\mapsto t^n$ is injective, the map $\psi$ is injective and thus, bijective.\qedhere
\end{enumerate}
\end{proof}
If $\T$ is a group, it is naturally isomorphic to $\T_\SG$.
\begin{Lemma}\label{Lemma:helptranslationgroup}
Let $m\in\Z\setminus\{0\}$ such that $\T^m$ is a group.
Then, the map
\begin{align*}
\T_\SG\to\T^m, \quad 
t\mapsto\phi(t)^m
\end{align*}
is a group isomorphism, where $\phi\colon\T_\SG\to\T$ is the canonical bijection.
In particular, $\T^m$ is isomorphic to $\Z^{d_2}$.

Furthermore, for all $n\in\Z$ it holds
\[\T^{nm}\triangleleft\T^m.\]
\end{Lemma}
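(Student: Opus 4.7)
The plan is to first establish bijectivity of $t\mapsto\phi(t)^m$, then derive the homomorphism property via a torsion-free argument on $\T_\SG$, and finally exploit the resulting isomorphism to obtain $\T^{nm}\triangleleft\T^m$.

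Bijectivity is almost immediate: $\phi\colon\T_\SG\to\T$ is bijective by definition (it is the inverse of $\pi|_\T$), and by the last item of Lemma~\ref{Lemma:TFP} the $m$-th power map $\T\to\T^m$, $s\mapsto s^m$, is a bijection for every $m\in\Z\setminus\{0\}$. Composing, $t\mapsto\phi(t)^m$ is a bijection $\T_\SG\to\T^m$.

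The homomorphism property is the main step, and this is where the hypothesis that $\T^m$ is a group enters decisively. The obstacle is that $\T$ is in general not a group and $\phi$ need not be a homomorphism, so a priori $\phi(ts)\neq\phi(t)\phi(s)$. However, since $\T^m$ is assumed to be a group, the product $\phi(t)^m\phi(s)^m$ still lies in $\T^m$ for any $t,s\in\T_\SG$. By the bijectivity established above, there is a unique $u\in\T_\SG$ with $\phi(u)^m=\phi(t)^m\phi(s)^m$. Applying the homomorphism $\pi$ and using that $\T_\SG$ is abelian, one obtains $u^m=\pi(\phi(t))^m\pi(\phi(s))^m=t^ms^m=(ts)^m$. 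Since $\T_\SG\cong\Z^{d_2}$ is torsion-free, the $m$-th power map on $\T_\SG$ is injective, forcing $u=ts$; hence $\phi(ts)^m=\phi(t)^m\phi(s)^m$. This upgrades the bijection to a group isomorphism, and $\T^m\cong\T_\SG\cong\Z^{d_2}$ follows from Theorem~\ref{Theorem:SpaceGroup}\ref{item:aaaa}.

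For the final assertion $\T^{nm}\triangleleft\T^m$, I would transport subgroups through the isomorphism $\psi_m\colon\T_\SG\to\T^m$, $t\mapsto\phi(t)^m$, just constructed. Using the homomorphism property, $\psi_m(t^n)=\psi_m(t)^n=\phi(t)^{nm}$ for all $t\in\T_\SG$ and all $n\in\Z$, and since $\phi$ bijects $\T_\SG$ with $\T$ one identifies $\psi_m(\T_\SG^n)=\set{s^{nm}}{s\in\T}=\T^{nm}$. As $\T_\SG^n$ is a subgroup of the abelian group $\T_\SG$, its isomorphic image $\T^{nm}$ is a subgroup of $\T^m$, and it is normal because $\T^m$ is abelian.
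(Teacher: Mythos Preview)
Your proof is correct. The approach differs from the paper's in its organization, though both rest on the same underlying fact that $\T_\SG\cong\Z^{d_2}$ is torsion-free.

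The paper factors the map $t\mapsto\phi(t)^m$ as a composite $\psi_3^{-1}\circ\psi_2\circ\psi_1$ of three isomorphisms through the quotient $\T\F/\F$: first $\psi_1\colon\T_\SG\to\T\F/\F$, $t\mapsto\phi(t)\F$ (which is Lemma~\ref{Lemma:TFP}\ref{item:abcd}), then the $m$-th power map $\psi_2$ on $\T\F/\F\cong\Z^{d_2}$, and finally $\psi_3\colon\T^m\to\T^m\F/\F$, $g\mapsto g\F$, whose injectivity implicitly uses that $\T^m\cap\F=\{\id\}$ once $\T^m$ is a group. Your argument bypasses the quotient by $\F$ entirely: you take bijectivity straight from Lemma~\ref{Lemma:TFP}(v), then verify the homomorphism identity $\phi(ts)^m=\phi(t)^m\phi(s)^m$ by pushing forward along $\pi$ and invoking injectivity of the $m$-th power on the torsion-free group $\T_\SG$. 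This is a bit more direct and avoids the detour through $\T\F/\F$; the paper's route, on the other hand, recycles Lemma~\ref{Lemma:TFP}\ref{item:abcd} as a black box. For the last clause both proofs say the same thing: your identification $\psi_m(\T_\SG^n)=\T^{nm}$ is exactly the paper's observation $\T^{nm}=(\T^m)^n$, after which normality is automatic since $\T^m$ is abelian.
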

\begin{proof}
Let $m\in\Z\setminus\{0\}$ such that $\T^m$ is a group.
Let $\pi\colon\T\F\to\T_\SG$ be the natural surjective homomorphism with kernel $\F$.
Let $\phi$ be the inverse function of $\pi|_\T$, \ie $\phi\colon\T_\SG\to\T$ is the canonical bijection.
The map
\[\psi_1\colon\T_\SG\to\T\F/\F,\quad t\mapsto\phi(t)\F\]
is an isomorphism.
Since $\T\F/\F$ is isomorphic to $\Z^{d_2}$ and $(\T\F/\F)^m=\T^m\F/\F$, the map
\[\psi_2\colon\T\F/\F\to\T^m\F/\F,\quad t\mapsto t^m\]
is an isomorphism.
Since $\T^m$ is a group, the map
\[\psi_3\colon\T^m\to\T^m\F/\F,\quad g\mapsto g\F\]
is an isomorphism.
The map
\[\T_\SG\to\T^m,\quad t\mapsto\phi(t)^m\]
is equal to $\psi_3^{-1}\circ\psi_2\circ\psi_1$ and thus, an isomorphism.

Let $n\in\Z$.
Since $\T^m$ is isomorphic to $\Z^{d_2}$, we have $\T^{mn}=(\T^m)^n\triangleleft\T^m$.
\end{proof}
We proceed to show that, albeit $\T$ is not a group in general, the situation is much better for special powers of $\T$.
\begin{Definition}\label{Definition:SubsetN}
We define the set
\[\MM:=\set{m\in\N }{ \T^m \text{ is a normal subgroup of }\G}.\]
\end{Definition}
Thus, the quotient group $\G/\T^m$ is well-defined if and only if $m\in \MM$.
\begin{Proposition}\label{Proposition:center2}
For all $m\in \MM$ the group $\T^m$ is a subgroup of the center of $\T\F$.
\end{Proposition}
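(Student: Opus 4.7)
The plan is to show that for any $s^m \in \T^m$ and any $g \in \T\F$, the commutator $[s^m, g] := s^m g s^{-m} g^{-1}$ equals $\id$. I will obtain this by trapping the commutator in the intersection $\T^m \cap \F$ and showing that this intersection is trivial.

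First I would verify that $[s^m, g] \in \T^m$. Since $m \in \MM$, $\T^m$ is a (normal) subgroup of $\G$, hence closed under inverses, so $s^{-m} \in \T^m$. Normality of $\T^m$ in $\G$ then gives $g s^{-m} g^{-1} \in \T^m$, and factoring the commutator as $[s^m, g] = s^m \cdot (g s^{-m} g^{-1})$ yields an element of $\T^m \cdot \T^m = \T^m$.

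Next I would show $[s^m, g] \in \F$. By Lemma~\ref{Lemma:TFP}\ref{item:abcd}, the quotient $\T\F/\F$ is commutative, so the image of $[s^m,g]$ in $\T\F/\F$ is trivial, i.e.\ $[s^m,g] \in \F$. Combined with the previous step this gives $[s^m, g] \in \T^m \cap \F$.

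It remains to prove $\T^m \cap \F = \{\id\}$, which is the heart of the argument. For this I would invoke Lemma~\ref{Lemma:helptranslationgroup}: since $\T^m$ is a group, it is isomorphic to $\T_\SG \cong \Z^{d_2}$, in particular torsion-free. On the other hand, $\F$ is finite by Lemma~\ref{Lemma:TFP}(i). Hence $\T^m \cap \F$ is a finite subgroup of a torsion-free group and must be trivial. Consequently $[s^m, g] = \id$, so every element of $\T^m$ commutes with every element of $\T\F$, which is the claim.

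The only slightly delicate point is recognizing that the two constraints on $[s^m, g]$ (lying in $\T^m$ and in $\F$) combine to force triviality; once this is spotted, the argument is routine, as all ingredients (normality of $\T^m$, commutativity of $\T\F/\F$, torsion-freeness of $\T^m$, finiteness of $\F$) are already at hand.
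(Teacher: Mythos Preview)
Your proof is correct and follows essentially the same approach as the paper: both arguments trap the commutator in $\T^m \cap \F$ using normality of $\T^m$ for one inclusion and commutativity of $\T\F/\F$ for the other, then conclude by triviality of this intersection. The only cosmetic differences are that the paper writes the commutator as $g^{-1}t^{-m}gt^m$ rather than $s^m g s^{-m} g^{-1}$, and that you spell out the argument for $\T^m\cap\F=\{\id\}$ via torsion-freeness (using Lemma~\ref{Lemma:helptranslationgroup}) where the paper simply asserts it.
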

\begin{proof}
Let $m\in \MM$, $t\in\T$ and $g\in\T\F$.
By Lemma~\ref{Lemma:TFP}\ref{item:abcd} there exists some $f\in\F$ such that
\[gt^m=t^mgf.\]
Since $m\in \MM$, it follows
\[f=g^{-1}t^{-m}gt^m\in\T^m.\]
Since $\T^m\cap\F=\{\id\}$, we have $f=\id$, \ie $g$ and $t^m$ commute.
\end{proof}
\begin{Lemma}\label{Lemma:MMnontrivial}
The set $\MM$ is not empty.
\end{Lemma}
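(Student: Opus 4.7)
The plan is to construct $m \in \MM$ explicitly from the finite cardinalities of $\F$ and $\Aut(\F)$. Because $\F$ is finite (Lemma~\ref{Lemma:TFP}(i)) and normal in $\G$, conjugation by each $t \in \T$ defines an automorphism of $\F$; let $N_1 \in \N$ satisfy $t^{N_1} f t^{-N_1} = f$ for every $t \in \T$ and $f \in \F$ (for instance $N_1 = \abs{\Aut(\F)}$, since the order of any element in a finite group divides the group order). I propose $m := N_1\,\abs{\F}^{2}$ and will verify $m \in \MM$.

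The first step is a quantitative centrality statement: $\T^{N_1\abs{\F}} \subseteq Z(\T\F)$. Commutation with $\F$ is automatic from the choice of $N_1$. For commutation with another $\tau \in \T$, the commutator $c := [t,\tau]$ lies in $\F$ by Lemma~\ref{Lemma:TFP}(iv), and iterating $t\tau = c\tau t$ gives $t^{j}\tau = C_{t,\tau}(j)\,\tau\,t^{j}$, where $C_{t,\tau}(j)$ is an ordered product of $j$ conjugates of $c$ by powers of $t$. For $j = N_1\abs{\F}$ the conjugation sequence is $N_1$-periodic by the defining property of $N_1$, so $C_{t,\tau}(j)$ collapses to $P^{\abs{\F}}$ for some $P \in \F$; and $P^{\abs{\F}} = \id$ by Lagrange.

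The second step is a power identity: for every $g \in \T\F$, writing $g = \tau h$ uniquely with $\tau \in \T$ and $h \in \F$, the same cycling mechanism applied to $(\tau h)^{k} = \tau^{k}\,D_{\tau,h}(k)$ (with $D_{\tau,h}(k)$ an ordered product of $k$ conjugates of $h$) yields $g^{m} = \tau^{m} \in \T^{m}$, because $m$ contains $\abs{\F}^{2}$ full periods of length $N_1$. This immediately gives normality, since for $g \in \G$ one has $gtg^{-1} \in \T\F$ by Lemma~\ref{Lemma:TFP}(ii), so $gtg^{-1} = \tau h$ uniquely and $g t^{m} g^{-1} = (gtg^{-1})^{m} = \tau^{m} \in \T^{m}$; and closure under inverses, since $t^{-1} \in \T\F$ and hence $t^{-m} = (t^{-1})^{m} \in \T^{m}$. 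Closure under products combines both steps: $t_{1}^{m}$ and $t_{2}^{m}$ commute by centrality, so $t_{1}^{m}t_{2}^{m} = (t_{1}^{N_{1}\abs{\F}}t_{2}^{N_{1}\abs{\F}})^{\abs{\F}}$; writing $t_{1}^{N_{1}\abs{\F}}t_{2}^{N_{1}\abs{\F}} = s^{N_{1}\abs{\F}}q$ with $s := \phi(\pi(t_{1})\pi(t_{2})) \in \T$ and $q \in \F \cap Z(\T\F)$ (central as a product of central elements), one obtains $(s^{N_{1}\abs{\F}}q)^{\abs{\F}} = s^{m}q^{\abs{\F}} = s^{m} \in \T^{m}$.

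The main obstacle is that $\F$ need not be central in $\T\F$: commutators $[t_{1},t_{2}] \in \F$ need not commute with $t_{1}$ or $t_{2}$, so a direct collection formula such as $(t_{1}t_{2})^{m} = [t_{1},t_{2}]^{-\binom{m}{2}}t_{1}^{m}t_{2}^{m}$ is unavailable. The two-stage exponent precisely resolves this difficulty: the factor $N_{1}\abs{\F}$ first pushes $m$-th powers into $Z(\T\F)$ where commutativity is restored and all conjugation-cycle products in $\F$ collapse to $\abs{\F}$-th powers; the further factor $\abs{\F}$ then absorbs these residual $\F$-terms by Lagrange.
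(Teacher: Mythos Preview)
Your proof is correct and follows essentially the same approach as the paper: both take $m = \abs{\Aut(\F)}\,\abs{\F}^2$, first establish that $(N_1\abs{\F})$-th powers from $\T$ are central in $\T\F$, and then use this centrality together with Lagrange's theorem to verify that $\T^m$ is a subgroup normalized by $\G$. The only cosmetic difference is organizational: you package the key computation as a single ``power identity'' $(\tau h)^m = \tau^m$ via explicit conjugation-cycle products, whereas the paper argues the centrality step and the subgroup/normality steps separately using the commutativity of $\T\F/\F$; the underlying mechanism and the resulting exponent are identical.
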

\begin{proof}
Since $\F$ is a normal subgroup of $\G$, for all $g\in\G$ the map
\[\phi_g\colon\F\to\F,\quad f\mapsto g^{-1}fg\]
is a group automorphism.
Let $n$ be the order of the automorphism group of $\F$.
For all $g\in\G$ it holds $\phi_g^{n}=\id$.
Thus for all $g\in\G$ and $f\in\F$ we have
\begin{equation}\label{eq:xa}
g^{n}f=fg^{n},
\end{equation}
\ie $g^{n}$ and $f$ commute.

Now we show that for all $g,h\in\T\F$ the elements $g^{n\abs\F}$ and $h$ commute.
Let $g,h\in\T\F$.
Since $\T\F/\F$ is commutative, there exists some $f\in\F$ such that
\[h^{-1}g^{n}h=g^{n}f.\]
With \eqref{eq:xa} it follows
\begin{equation}\label{eq:xb}
h^{-1}g^{n\abs\F}h=(h^{-1}g^{n}h)^{\abs\F}=(g^{n}f)^{\abs\F}=g^{n\abs\F}f^{\abs\F}=g^{n\abs\F}.
\end{equation}
Now we show that $\T^{n\abs\F^2}$ is a subgroup of $\T\F$.
Let $t,s\in\T$.
We have to show that $t^{n\abs\F^2}s^{-n\abs\F^2}\in\T^{n\abs\F^2}$.
Let $r\in\T$ and $f\in\F$ such that $ts^{-1}=rf$.
Since $\T\F/\F$ is commutative, there exists some $e\in\F$ such that $t^{n\abs\F}s^{-n\abs\F}=r^{n\abs\F}e$.
By \eqref{eq:xb} and \eqref{eq:xa} we have
\[t^{n\abs\F^2}s^{-n\abs\F^2}=(t^{n\abs\F}s^{-n\abs\F})^{\abs\F}=(r^{n\abs\F}e)^{\abs\F}=r^{n\abs\F^2}e^{\abs\F}=r^{n\abs\F^2}\in\T^{n\abs\F^2}.\]
Now we show that $\T^{n\abs\F^2}$ is a normal subgroup of $\G$.
Let $g\in\G$ and $t\in\T$.
We have to show that
\[g^{-1}t^{n\abs\F^2}g\in\T^{n\abs\F^2}.\]
Since $\T^{n}\F$ is a normal subgroup of $\G$, there exist some $s\in\T$ and $f\in\F$ such that
\[g^{-1}t^{n}g=s^{n}f.\]
By \eqref{eq:xa} we have
\[g^{-1}t^{n\abs\F^2}g=(g^{-1}t^{n}g)^{\abs\F^2}=(s^{n}f)^{\abs\F^2}
=s^{n\abs\F^2}f^{\abs\F^2}=s^{n\abs\F^2}\in\T^{n\abs\F^2}.\qedhere\]
\end{proof}
The following Theorem is a key observation on the structural decomposition of $\G$. Not only do we have the existence of exponents $m \in \N$ such that $\T^m \triangleleft \G$ as guaranteed by Lemma~\ref{Lemma:MMnontrivial}, but the set of such `good' exponents will in fact be of the form $m_0\N$, $m_0\in\N$.
\begin{Theorem}\label{Theorem:Ring}
There exists a unique $m_0\in\N$ such that $\MM=m_0\N$.
\end{Theorem}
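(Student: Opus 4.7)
The plan is to define $m_0 := \min \MM$, which is well-defined since $\MM \subseteq \N$ is non-empty by Lemma~\ref{Lemma:MMnontrivial}, and then to show $\MM = m_0\N$ by proving both inclusions; uniqueness of $m_0$ follows at once from this characterization.

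For the inclusion $m_0\N \subseteq \MM$ I would use the set-level identity $\T^{km} = \{u^k : u \in \T^m\}$, which holds for all $m, k \in \N$. When $m \in \MM$, Lemma~\ref{Lemma:helptranslationgroup} identifies $\T^m$ with $\Z^{d_2}$, and under this identification $\T^{km}$ corresponds to $k\Z^{d_2}$, a characteristic subgroup. Since conjugation by any $g \in \G$ restricts to an automorphism of $\T^m \triangleleft \G$, it preserves the characteristic subgroup $\T^{km}$, so $\T^{km} \triangleleft \G$, i.e.\ $km \in \MM$.

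The main obstacle is the reverse inclusion $\MM \subseteq m_0\N$, which I would obtain by first proving gcd closure: if $m, n \in \MM$, then $d := \gcd(m,n) \in \MM$. Granted this, for any $m \in \MM$ one has $\gcd(m, m_0) \in \MM$, and the minimality of $m_0$ forces $\gcd(m, m_0) = m_0$, so $m_0 \mid m$. To establish the gcd closure I would use Bezout to write $d = am + bn$ with integers $a, b$, yielding the key identity
\[ t^d = (t^m)^a(t^n)^b \qquad (t \in \T), \]
which expresses elements of $\T^d$ through the already-normal subgroups $\T^m, \T^n \triangleleft \G$.

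The verification that $\T^d$ is a normal subgroup of $\G$ relies on two ingredients: the canonical bijection $\phi \colon \T_\SG \to \T$ and the centrality $\T^m, \T^n \subseteq Z(\T\F)$ from Proposition~\ref{Proposition:center2}. For closure under products, given $s, t \in \T$, setting $r := \phi(\pi(s)\pi(t)) \in \T$ one observes that $s^m t^m$ and $r^m$ both lie in $\T^m$ with identical $\pi$-image, hence coincide by injectivity of $\pi|_{\T^m}$ (since $\T^m \cap \F = \{\id\}$); analogously $s^n t^n = r^n$, and centrality lets one reshuffle factors to compute $s^d t^d = (s^m t^m)^a (s^n t^n)^b = r^d \in \T^d$. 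Closure under inverses is handled by $u := \phi(\pi(t)^{-1})$, which gives $u^m = t^{-m}$, $u^n = t^{-n}$, and thus $t^{-d} = u^d$; normality under conjugation by $g \in \G$ follows by the same recipe with $r := \phi(\pi(g)\pi(t)\pi(g)^{-1})$. The delicate point throughout is that $\T$ itself is not a group, so $\T^d$ cannot be treated as a homomorphic image; the bijection $\phi$ and the centrality from Proposition~\ref{Proposition:center2} are what make all the relevant products collapse back to the form $r^d$ with $r \in \T$.
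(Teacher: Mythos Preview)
Your proof is correct and follows essentially the same strategy as the paper: both arguments reduce to showing that the set of ``good'' exponents is closed under the appropriate arithmetic operation, using Proposition~\ref{Proposition:center2} (centrality of $\T^m$ in $\T\F$) and the canonical bijection $\phi\colon\T_\SG\to\T$ to collapse products of the form $s^{\bullet}t^{\bullet}$ or $g t^{\bullet} g^{-1}$ back into a single $r^{\bullet}$ with $r\in\T$. The only organizational difference is that the paper passes to $\widetilde\MM:=\{m\in\Z:\T^m\triangleleft\G\}$ and shows directly that it is a subgroup of $\Z$ (closure under $n_1-n_2$), which immediately yields $\widetilde\MM=m_0\Z$ and hence $\MM=m_0\N$ in one stroke; you instead stay in $\N$, prove closure under multiples and under $\gcd$ separately via B\'ezout, and then invoke minimality of $m_0$ --- a slightly longer route to the same destination.
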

\begin{proof}
We define the set
\[\widetilde \MM:=\set{m\in\Z}{\T^m\text{ is a normal subgroup of }\G}.\]
First we show that $\widetilde \MM$ is a subgroup of the additive group of integers $\Z$.
It is clear that $0\in\widetilde \MM$.
Let $n_1,n_2\in\widetilde \MM$.
We have to show that $n_1-n_2\in\widetilde \MM$.
Let $\phi\colon\T_\SG\to\T$ be the canonical bijection.
By Proposition~\ref{Proposition:center2} and Lemma~\ref{Lemma:helptranslationgroup}, for all $t,s\in\T_\SG$ it holds
\begin{align*}
\phi(t)^{n_1-n_2}\phi(s)^{-(n_1-n_2)}&=\phi(t)^{n_1}\phi(s)^{-n_1}\phi(t)^{-n_2}\phi(s)^{n_2}=\phi(ts^{-1})^{n_1}\phi(ts^{-1})^{-n_2}\\
&=\phi(ts^{-1})^{n_1-n_2}\in\T^{n_1-n_2},
\end{align*}
and thus, $\T^{n_1-n_2}$ is a group.
It remains to show that $\T^{n_1-n_2}$ is a normal subgroup of $\G$.
Without loss of generality we assume that $n_1,n_2\neq0$, \ie $n_1n_2\neq0$.
Let $g\in\G$ and $t\in\T$.
Since $\T^{n_1},\T^{n_2}\triangleleft\G$, there exist some $s_1,s_2\in\T$ such that $gt^{n_1}g^{-1}=s_1^{n_1}$ and $gt^{n_2}g^{-1}=s_2^{n_2}$.
Since $s_1^{n_1n_2}=gt^{n_1n_2}g^{-1}=s_2^{n_1n_2}$ and the map $\T\to\T^{n_1n_2}$, $r\mapsto r^{n_1n_2}$ is bijective, it holds $s_1=s_2$.
Now we have
\[gt^{n_1-n_2}g^{-1}=(gt^{n_1}g^{-1})(gt^{n_2}g^{-1})^{-1}=s_1^{n_1-n_2}\in\T^{n_1-n_2}.\]
By Lemma~\ref{Lemma:MMnontrivial} and since $\MM\subset\widetilde \MM$, the group $\widetilde \MM$ is nontrivial.
Since every nontrivial subgroup of $\Z$ is equal to $n\Z$ for some $n\in\N$, there exists a unique $m_0\in\N$ such that $\widetilde \MM=m_0\Z$. 
Now, we have
\[\MM=\widetilde \MM\cap\N=m_0\N.\qedhere\]
\end{proof}
\begin{Remark}
\begin{enumerate}
\item The proof of Lemma~\ref{Lemma:MMnontrivial} shows that $m_0$ divides $\abs\F^2\abs{\Aut(\F)}$, where $m_0\in\N$ is such that $\MM=m_0\N$ and $\Aut(\F)$ is the automorphism group of $\F$. In particular, we have an upper bound for $m_0$.
\item\label{item:tk} The group $\G$ is virtually abelian since for all $m\in \MM$ the index of the abelian subgroup $\T^m$ in $\G$ is $m^{d_2}\abs{\F}\abs{\G/(\T\F)}$ and thus, finite. 
\end{enumerate}
\label{Remark:tk}\end{Remark}
%

%-------------------------------------------------------------------
%-------------------------------------------------------------------

%-------------------------------------------------------------------
%-------------------------------------------------------------------
\section{Wave vector characterization of the dual space}\label{subsection:dual}
We now study representations of $\T\F$ and their induced representations on $\G$. As $\T\F = \pi^{-1}(\T_\mathcal{S})$, where $\pi : \G \to \mathcal{S}$ is the natural surjective homomorphism, we are led to first considering characters on $\T\F$ by lifting those on $\T_\mathcal{S}$ via $\pi^{-1}$. (The reader is referred to Section \ref{subsection:harmonic} in the appendix for basic material on representations of a discrete subgroup $\H$ of $\E(d)$, its dual space $\dual \H$ and representations induced by a normal subgroup of finite index.)
\begin{Definition}\label{Definition:ChiK}
For all $k\in\R^{d_2}$ we define the one-dimensional representation $\chi_k\in\dual{\T\F}$ by
\[\chi_k(g):=\exp(2\pi\iu\scalar k{\trans(\pi(g))})\qquad \text{for all }g\in\T\F,\]
where $\pi\colon\T\F\to\T_\SG$ is the natural surjective homomorphism.
\end{Definition}
Since $\T\F$ is a normal subgroup of $\G$, $\G$ acts on $\dual{\T\F}$, {cf.\ Definition~\ref{Definition:Representation}}.
\begin{Lemma}\label{Lemma:characters}
For all $g\in\G$ and $k,k'\in\R^{d_2}$ it holds
\begin{align*}
\chi_k\chi_{k'}=\chi_{k+k'}
\qquad \text{and} \qquad 
g\cdot\chi_k=\chi_{\rot(\pi(g))k},
\end{align*}
where $\pi\colon\G\to\SG$ is the natural surjective homomorphism.
\end{Lemma}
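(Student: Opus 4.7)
The statement breaks into two parts; both are direct calculations from the definition of $\chi_k$, but the second one requires care with the conjugation action.

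For the multiplicative identity $\chi_k \chi_{k'} = \chi_{k+k'}$, the plan is simply to evaluate both sides on an arbitrary $h \in \T\F$. Setting $t = \trans(\pi(h))$, the left-hand side becomes $\exp(2\pi\iu\scalar k t)\exp(2\pi\iu\scalar{k'} t)$ which by bilinearity of $\scalar{\cdot}{\cdot}$ and the functional equation of the exponential equals $\exp(2\pi\iu\scalar{k+k'}{t})$, namely $\chi_{k+k'}(h)$.

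For the conjugation formula $g\cdot\chi_k = \chi_{\rot(\pi(g))k}$, I would recall that the action of $\G$ on $\dual{\T\F}$ is $(g\cdot\chi_k)(h) = \chi_k(g^{-1}hg)$ for $h\in\T\F$, and then compute $\trans(\pi(g^{-1}hg))$ using the semidirect product multiplication rule from Section~\ref{section:structure}. Write $\pi(g) = \iso Ab$ with $A = \rot(\pi(g))$ and, since $\pi(h) \in \T_\SG$, write $\pi(h) = \iso{I_{d_2}}{t}$ with $t = \trans(\pi(h))$. Then a direct application of $\iso{A_1}{b_1}\iso{A_2}{b_2} = \iso{A_1A_2}{b_1+A_1b_2}$ and $\iso Ab^{-1} = \iso{A^{-1}}{-A^{-1}b}$ yields $\pi(g^{-1}hg) = \iso{I_{d_2}}{A^{-1}t}$, so that
\[ (g\cdot\chi_k)(h) = \exp(2\pi\iu\scalar{k}{A^{-1}t}). \]
Since $A \in \O(d_2)$ satisfies $A^{-1} = A^T$, one has $\scalar k{A^{-1}t} = \scalar{Ak}{t}$, which identifies the right-hand side as $\chi_{Ak}(h) = \chi_{\rot(\pi(g))k}(h)$.

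There is no real obstacle here beyond careful bookkeeping of the conjugation in the semidirect product; the two crucial inputs are (i) that elements of $\T\F = \pi^{-1}(\T_\SG)$ project to pure translations, ensuring the linear component of $\pi(g^{-1}hg)$ is trivial, and (ii) the orthogonality of $\rot(\pi(g))$, which moves the linear part from the second slot of the inner product to the first.
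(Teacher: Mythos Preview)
Your proof is correct and follows essentially the same approach as the paper's own proof: both parts are handled by direct computation from the definition of $\chi_k$, with the second part reducing to $\trans(\pi(g^{-1}hg)) = \rot(\pi(g))^{-1}\trans(\pi(h))$ and then moving the orthogonal matrix across the inner product. Your version is slightly more explicit in deriving the conjugation formula via the semidirect product rules, whereas the paper simply states the resulting translation component, but the argument is the same.
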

\begin{proof}
Let $g\in\G$, $k,k'\in\R^{d_2}$ and $\pi\colon\G\to\SG$ be the natural surjective homomorphism.
For all $h\in\T\F$ it holds
\begin{align*}
\chi_k(h)\chi_{k'}(h)&=\exp(2\pi\iu\scalar k{\trans(\pi(h))})\exp(2\pi\iu\scalar{k'}{\trans(\pi(h))})\\
&=\exp(2\pi\iu\scalar{k+k'}{\trans(\pi(h))})\\
&=\chi_{k+k'}(h)
\end{align*}
and
\begin{align*}
(g\cdot\chi_k)(h)&=\chi_k(g^{-1}hg)\\
&=\exp(2\pi\iu\scalar k{\trans(\pi(g^{-1}hg))})\\
&=\exp(2\pi\iu\scalar k{\rot(\pi(g^{-1}))\trans(\pi(h))})\\
&=\exp(2\pi\iu\scalar{\rot(\pi(g))k}{\trans(\pi(h))})\\
&=\chi_{\rot(\pi(g))k}(h).\qedhere
\end{align*}
\end{proof}
In our analysis of periodic representations below it will necessary to analyze representations that are trivial on $\T^n$, $n\in\N$. To this end, we recall that a set $L\subset\R^n$ is a \emph{lattice} if $L$ is a subgroup of the additive group $\R^n$ which is isomorphic to the additive group $\Z^n$, and which spans the real vector space $\R^n$. The \emph{dual lattice} $L^*$ (also called the \emph{reciprocal lattice}) of a lattice $L\subset\R^n$ is the set
\[\set{x\in\R^n}{\scalar xy\in\Z\text{ for all }y\in L}.\]
This is indeed a lattice as well, see, \eg, \cite[Chapter 1]{Martinet2003}.
\begin{Definition}\label{Definition:LatticeSpaceGroup}
We define the lattice
\[\LL:=\trans(\T_\SG)<\R^{d_2}\]
and denote its dual lattice by $\dualL$.
\end{Definition}
\begin{Lemma}\label{Lemma:VNK}
For all $n\in\N$ it holds
\[\dualL/n=\set{k\in\R^{d_2}}{\chi_k|_{\T^n}=1}.\]
\end{Lemma}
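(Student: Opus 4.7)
The plan is to unravel the definition of $\chi_k$ restricted to $\T^n$ and translate the triviality condition into a lattice condition. The key intermediate observation is a concrete description of $\trans(\pi(\T^n))$: since $\T_\SG$ is abelian (isomorphic to $\Z^{d_2}$) and $\pi(\T) = \T_\SG$, the image $\pi(\T^n)$ equals $\T_\SG^n$, which coincides with the subgroup of $n$-th powers in $\T_\SG$. Applying $\trans$, this yields
\[
\trans(\pi(\T^n)) = \trans(\T_\SG^n) = n\LL.
\]

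With this identification in hand, for any $g \in \T^n$ one has $\chi_k(g) = \exp(2\pi\iu\scalar{k}{\trans(\pi(g))})$ with $\trans(\pi(g)) \in n\LL$, and conversely every $y \in n\LL$ arises as $\trans(\pi(g))$ for some $g \in \T^n$. Hence $\chi_k|_{\T^n} = 1$ is equivalent to
\[
\exp(2\pi\iu \scalar{k}{y}) = 1 \quad \text{for all } y \in n\LL,
\]
i.e.\ to $\scalar{k}{y} \in \Z$ for all $y \in n\LL$. Writing $y = nz$ with $z \in \LL$, this becomes $\scalar{nk}{z} \in \Z$ for all $z \in \LL$, which by definition of the dual lattice is $nk \in \dualL$, or equivalently $k \in \dualL/n$.

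The only non-routine ingredient is the equality $\pi(\T^n) = \T_\SG^n$. This follows from surjectivity of $\pi|_\T \colon \T \to \T_\SG$ (guaranteed by Definition~\ref{Definition:discreteisometrygroup}) together with the fact that $\T_\SG$ is abelian, so products of $n$ elements of $\T_\SG$ coincide with $n$-th powers; everything else is a direct computation from the definitions of $\chi_k$, $\LL$, and $\dualL$. No serious obstacle is expected.
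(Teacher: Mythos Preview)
Your argument is correct and follows essentially the same route as the paper: both hinge on the identity $\trans(\pi(t^n)) = n\,\trans(\pi(t))$ (which you package as $\trans(\pi(\T^n)) = n\LL$), reducing the condition $\chi_k|_{\T^n}=1$ to $\scalar{nk}{z}\in\Z$ for all $z\in\LL$. One small remark: since the paper defines $S^n := \set{s^n}{s\in S}$ and $\pi$ is a homomorphism with $\pi|_\T$ bijective onto $\T_\SG$, the equality $\pi(\T^n)=\T_\SG^n$ is immediate and needs neither abelianness of $\T_\SG$ nor any discussion of $n$-fold products.
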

\begin{proof}
Let $n\in\N$ and $\pi\colon\T\F\to\T_\SG$ be the natural surjective homomorphism.
First we show that $\dualL/n\subset\set{k\in\R^{d_2}}{\chi_k|_{\T^n}=1}$.
Let $k\in \dualL/n$.
For all $t\in\T$ it holds $\trans(\pi(t^n))=n\trans(\pi(t))$ and thus,
\[\chi_k(t^n)=\exp(2\pi\iu\scalar k{\trans(\pi(t^n))})=\exp(2\pi\iu\scalar{nk}{\trans(\pi(t))})=1.\]
Now we show that $\set{k\in\R^{d_2}}{\chi_k|_{\T^n}=1}\subset\dualL/n$.
Let $k\in\R^{d_2}$ such that $\chi_k|_{\T^n}=1$.
Let $x\in\LL$.
There exists some $t\in\T$ such that $x=\trans(\pi(t))$.
We have
\[\scalar{nk}x=\scalar{nk}{\trans(\pi(t))}=\scalar k{\trans(\pi(t^n))}\in\Z,\]
where we used that $\chi_k(t^n)=1$ in the last step.
Since $x\in\LL$ was arbitrary, we have $k\in\dualL/n$.
\end{proof}
In view of $\G$ and the characters of $\T\F$ acting on $\dual{\T\F}$  we proceed to introduce the following equivalence relation. 
\begin{Definition}\label{Definition:RelationTF}
We define the relation $\simrg$ on $\dual{\T\F}$ by
\[(\rho\simrg\rho')\;:\Longleftrightarrow\;(\exists\,g\in\G\,\exists\,k\in\R^{d_2} : g\cdot\rho=\chi_k\rho').\]
\end{Definition}
\begin{Remark}
One can also define an equivalence relation $\simrg$ on the set of all representations of $\T\F$ by
\[(\rho\sim\rho')\;:\Longleftrightarrow\;([\rho]\sim[\rho'])\qquad\text{for all representations $\rho$, $\rho'$ on $\T\F$.}\]
\end{Remark}
\begin{Lemma}
The relation $\simrg$ on $\dual{\T\F}$ is an equivalence relation.
\end{Lemma}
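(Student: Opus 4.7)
The plan is to verify the three defining properties of an equivalence relation separately. Throughout, I use two structural facts that follow directly from the conjugation definition $(g\cdot\rho)(h)=\rho(g^{-1}hg)$: the map $\rho\mapsto g\cdot\rho$ is a left action, i.e.\ $g_1\cdot(g_2\cdot\rho)=(g_1g_2)\cdot\rho$, and it distributes over pointwise products of representations, $g\cdot(\rho\rho')=(g\cdot\rho)(g\cdot\rho')$. I also freely invoke Lemma~\ref{Lemma:characters}, giving $g\cdot\chi_k=\chi_{\rot(\pi(g))k}$ together with the multiplicativity $\chi_k\chi_{k'}=\chi_{k+k'}$, and the fact that $\rot\circ\pi$ is a group homomorphism, so $\rot(\pi(g^{-1}))=\rot(\pi(g))^{-1}$.

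For reflexivity, take $g=\id$ and $k=0$: since $\chi_0\equiv 1$ and $\id\cdot\rho=\rho$, we obtain $\rho\sim\rho$. For symmetry, suppose $g\cdot\rho=\chi_k\rho'$. I apply $g^{-1}$ to both sides. The left side collapses to $\rho$ by the left-action property, while the right side splits as $(g^{-1}\cdot\chi_k)(g^{-1}\cdot\rho')=\chi_{\rot(\pi(g))^{-1}k}(g^{-1}\cdot\rho')$. Multiplying through by $\chi_{-\rot(\pi(g))^{-1}k}$ yields $g^{-1}\cdot\rho'=\chi_{-\rot(\pi(g))^{-1}k}\rho$, so $\rho'\sim\rho$ is witnessed by the pair $(g^{-1},-\rot(\pi(g))^{-1}k)$.

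For transitivity, assume $g_1\cdot\rho=\chi_{k_1}\rho'$ and $g_2\cdot\rho'=\chi_{k_2}\rho''$. I apply $g_2$ to the first identity: the left side becomes $(g_2g_1)\cdot\rho$, and the right side becomes $(g_2\cdot\chi_{k_1})(g_2\cdot\rho')=\chi_{\rot(\pi(g_2))k_1}\chi_{k_2}\rho''=\chi_{\rot(\pi(g_2))k_1+k_2}\rho''$. Hence $\rho\sim\rho''$ is witnessed by $(g_2g_1,\rot(\pi(g_2))k_1+k_2)$. There is no genuine obstacle in this proof; the only care required is bookkeeping how the orthogonal part of $\pi(g)$ transforms the wave vector $k$ in the symmetry step (checking that one really lands on $\rot(\pi(g))^{-1}k$, not $\rot(\pi(g))k$), and the mild formality that the $\G$-action respects products of representations.
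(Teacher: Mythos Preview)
Your proof is correct and follows essentially the same approach as the paper's own argument: reflexivity via $(g,k)=(\id,0)$, symmetry by acting with $g^{-1}$ and using $g^{-1}\cdot\chi_k=\chi_{\rot(\pi(g))^{-1}k}$, and transitivity by composing the two witnessing pairs. The only cosmetic difference is that the paper writes $\rot(\pi(g^{-1}))$ where you write $\rot(\pi(g))^{-1}$, and in the symmetry step it multiplies by $g^{-1}\cdot\chi_{-k}$ before simplifying rather than after, but these are the same computation.
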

\begin{proof}
It is clear that $\simrg$ is reflexive.

Now we show that $\simrg$ is symmetric.
Let $\rho,\rho'\in\dual{\T\F}$ such that $\rho\simrg\rho'$.
There exist some $g\in\G$ and $k\in\R^{d_2}$ such that $g\cdot\rho=\chi_k\rho'$.
This implies
\[g^{-1}\cdot\rho'=(g^{-1}\cdot\chi_{-k})(g^{-1}\cdot(\chi_k\rho'))=\chi_{-\rot(\pi(g^{-1}))k}\rho,\]
where $\pi\colon\G\to\SG$ is the natural surjective homomorphism.

Now we show that $\simrg$ is transitive.
Let $\rho,\rho',\rho''\in\dual{\T\F}$ such that $\rho\simrg\rho'$ and $\rho'\simrg\rho''$.
There exist some $g,g'\in\G$ and $k,k'\in\R^{d_2}$ such that $g\cdot\rho=\chi_k\rho'$ and $g'\cdot\rho'=\chi_{k'}\rho''$.
This implies
\[(g'g)\cdot\rho=g'\cdot(\chi_k\rho')=\chi_{\rot(\pi(g'))k+k'}\rho'',\]
where $\pi\colon\G\to\SG$ is the natural surjective homomorphism.
\end{proof}
\begin{Definition}\label{Definition:QuotientGroup}
For all groups $\H\le\G$ and $N\in\MM$ such that $\T^N$ is a normal
subgroup of $\H$, let $\H_N$ denote the quotient group $\H/\T^N$.
\end{Definition}
The following lemma gives an algorithm how we can determine a representation set of $\dual{\T\F}/{\simrg}$.
\begin{Lemma}\label{Lemma:RepSys}
Let $m\in\N$ such that $\MM=m\N$.
\begin{enumerate}
\item\label{item:aaa} Every representation set of $\set{\rho\in\dual{\T\F}}{\rho|_{\T^m}=I_{d_\rho}}/{\simrg}$ is a representation set of $\dual{\T\F}/{\simrg}$.
\item\label{item:bbb} The map
\[\dual{(\T\F)_m}\to\set{\rho\in\dual{\T\F}}{\rho|_{\T^m}=I_{d_\rho}},\quad\rho\mapsto\rho\circ\pi\]
where $\pi\colon\T\F\to(\T\F)_m$ is the natural surjective homomorphism, is bijective.
In particular, the set $\set{\rho\in\dual{\T\F}}{\rho|_{\T^m}=I_{d_\rho}}$ is finite.
\item\label{item:ccc} Let $K$ be a representation set of $(\dualL/m)/\dualL$ and $\P$ be a representation set of $\G/(\T\F)$.
Then, for all $\rho,\rho'\in\set{\tilde\rho\in\dual{\T\F}}{\tilde\rho|_{\T^m}=I_{d_{\tilde\rho}}}$ it holds
\[(\rho\simrg\rho')\iff(\exists\,g\in\P\,\exists\,k\in K: g\cdot\rho=\chi_k\rho').\]
\end{enumerate}
\end{Lemma}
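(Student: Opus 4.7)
The plan is to handle the three parts in order, with the main effort concentrated in (iii). For (i), the key observation is Proposition~\ref{Proposition:center2}: $\T^m$ lies in the center of $\T\F$. Schur's lemma applied to the irreducible $\rho\in\dual{\T\F}$ then forces $\rho(t)$ to be a (unitary) scalar multiple of the identity for every $t\in\T^m$, so that $\rho|_{\T^m}$ is determined by a unitary character of $\T^m$. Via the isomorphism $\T^m\cong\T_\SG\cong\Z^{d_2}$ from Lemma~\ref{Lemma:helptranslationgroup} and the explicit formula $\chi_k(t^m)=\exp(2\pi\iu m\scalar k{\trans(\pi(t))})$, the assignment $k\mapsto\chi_k|_{\T^m}$ surjects onto the character group of $\T^m$. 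Choosing $k\in\R^{d_2}$ so that $\chi_k|_{\T^m}$ matches the character of $\rho|_{\T^m}$ yields $(\chi_{-k}\rho)|_{\T^m}=I_{d_\rho}$, and the identity $\rho=\chi_k\cdot(\chi_{-k}\rho)$ (taking $g=\id$ in Definition~\ref{Definition:RelationTF}) shows $\rho\simrg\chi_{-k}\rho$.

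Part (ii) is the standard bijection between irreducible representations of a quotient group and those of the parent group that are trivial on the quotiented subgroup: composition with $\pi$ preserves matrix coefficients, hence irreducibility and equivalence, while a $\T^m$-trivial representation of $\T\F$ factors uniquely through $(\T\F)_m$. For the finiteness claim, Lemma~\ref{Lemma:TFP}\ref{item:abcd} gives $[\T\F:\T^m\F]=m^{d_2}$ (using $\T\F/\F\cong\T_\SG\cong\Z^{d_2}$ and $\T^m\F/\F\cong\T_\SG^m$), while $\F\cap\T^m=\{\id\}$ (since $\F\subset\ker\pi$ and $\pi|_{\T^m}$ injects into the torsion-free $\T_\SG$) gives $[\T^m\F:\T^m]=\abs{\F}$; hence $(\T\F)_m$ has order $m^{d_2}\abs{\F}$, and a finite group has only finitely many irreducible representations up to equivalence.

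For (iii), the backward implication is immediate from Definition~\ref{Definition:RelationTF}. Conversely, suppose $g\cdot\rho=\chi_k\rho'$ in $\dual{\T\F}$ for some $g\in\G$ and $k\in\R^{d_2}$. I would first move $g$ into $\P$ by writing $g=g_0 h$ with $g_0\in\P$ and $h\in\T\F$; since $(h\cdot\rho)(n)=\rho(h^{-1}nh)=\rho(h)^{-1}\rho(n)\rho(h)$ is intertwined with $\rho$ by $\rho(h)$, one has $h\cdot\rho=\rho$ as elements of $\dual{\T\F}$, and consequently $g_0\cdot\rho=\chi_k\rho'$ there. Next I would force $k$ into $K$: because $\T^m\triangleleft\G$ implies $g_0^{-1}\T^m g_0=\T^m$ and $\rho|_{\T^m}=I$, the representation $g_0\cdot\rho$ is still trivial on $\T^m$; comparing with $(\chi_k\rho')|_{\T^m}=\chi_k|_{\T^m}\cdot I$ through any intertwiner then forces $\chi_k|_{\T^m}=1$, so $k\in\dualL/m$ by Lemma~\ref{Lemma:VNK}. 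Writing $k=\bar{k}+\ell$ with $\bar{k}\in K$ and $\ell\in\dualL$, and noting that $\trans(\pi(\T\F))=\LL$ pairs integrally with $\dualL$ (so $\chi_\ell|_{\T\F}=1$), I obtain $\chi_k=\chi_{\bar{k}}$ on $\T\F$ and therefore $g_0\cdot\rho=\chi_{\bar{k}}\rho'$ with $g_0\in\P$ and $\bar{k}\in K$.

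The main obstacle is the bookkeeping in (iii): each reduction must be justified at the level of equivalence classes in $\dual{\T\F}$ rather than of honest functions, so the identity $h\cdot\rho=\rho$ for $h\in\T\F$ holds only after passage to $\dual{\T\F}$, and the scalar identification of $\chi_k|_{\T^m}$ has to be extracted through an intertwiner between $g_0\cdot\rho$ and $\chi_k\rho'$ rather than read off directly from an equation of honest functions.
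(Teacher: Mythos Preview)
Your proposal is correct and follows essentially the same route as the paper: part~(i) via Proposition~\ref{Proposition:center2} and Schur's lemma to strip off a character $\chi_k$, part~(ii) via the standard quotient--representation bijection (the paper just cites Proposition~\ref{Proposition:homeomorphism} and Remark~\ref{Remark:tk}\ref{item:tk}), and part~(iii) by reducing $g$ to a representative in $\P$ via the triviality of the $\T\F$-action on $\dual{\T\F}$ and then forcing $k$ into $\dualL/m$ (hence into $K$ modulo $\dualL$) from $\chi_k|_{\T^m}=1$. The only cosmetic difference is the order of the two reductions in~(iii): the paper first extracts $\chi_k|_{\T^m}=1$ from $g\cdot\rho$ and only then replaces $g$ by $h\in\P$, whereas you swap to $g_0\in\P$ first; both are equally valid.
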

\begin{proof}
Let $m\in\N$ such that $\MM=m\N$.

\ref{item:aaa}
Let $R$ be a representation set of $\set{\rho\in\dual{\T\F}}{\rho|_{\T^m}=I_{d_\rho}}/{\simrg}$.
We have to show that for all $\rho\in\dual{\T\F}$ there exists some $\rho'\in R$ such that $\rho\simrg\rho'$.
Let $\rho\in\dual{\T\F}$. 
By Proposition~\ref{Proposition:center2} the group $\T^m$ is a subgroup of the center of $\T\F$ and thus, by Proposition~\ref{Proposition:commutant} for all $t\in\T^m$ there exists some $\lambda\in\C$ such that $\abs\lambda=1$ and $\rho(t)=\lambda I_{d_\rho}$.
Hence, there exists some one-dimensional representation $\chi\in\dual{\T^m}$ such that $\rho|_{\T^m}=\chi I_{d_\rho}$.

There exists some $k\in\R^{d_2}$ such that $\chi|_{\T^m}=\chi_k|_{\T^m}$: By Lemma~\ref{Lemma:helptranslationgroup} the group $\T^m$ is isomorphic to $\Z^{d_2}$.
Thus, there exist $t_1,\dots,t_{d_2}\in\T^m$ such that $\{t_1,\dots,t_{d_2}\}$ generates $\T^m$.
For all $j\in\{1,\dots,d_2\}$ there exists some $\alpha_j\in\R$ such that $\exp(2\pi\iu\alpha_j)=\chi(t_j)$.
For all $i\in\{1,\dots,d_2\}$ let $b_i\in\R^{d_2}$ such that
\[\scalar{b_i}{\trans(\pi(t_j))}=\delta_{ij}\qquad\text{for all }j\in\{1,\dots,d_2\},\]
where $\pi\colon\T\F\to\T_\SG$ is the natural surjective homomorphism.
For $k=\sum_{i=1}^{d_2}\alpha_ib_i\in\R^{d_2}$ it holds $\chi|_{\T^m}=\chi_k|_{\T^m}$.

Thus, we have $\rho|_{\T^m}=\chi_k|_{\T^m}I_{d_\rho}$.
Since $\chi_{-k}\rho\in\dual{\T\F}$ and $(\chi_{-k}\rho)|_{\T^m}=I_{d_\rho}$, there exists some $\rho'\in R$ such that $\chi_{-k}\rho\simrg\rho'$.
There exist some $g\in\G$ and $l\in\R^{d_2}$ such that $g\cdot\rho'=\chi_l(\chi_{-k}\rho)$.
This implies $\rho\simrg\rho'$.

\ref{item:bbb}
This is clear by Proposition~\ref{Proposition:homeomorphism} and Remark~\ref{Remark:tk}\ref{item:tk}.

\ref{item:ccc}
Let $\rho,\rho'\in\dual{\T\F}$ such that $\rho|_{\T^m}=I_{d_\rho}$, $\rho'|_{\T^m}=I_{d_{\rho'}}$ and $\rho\simrg\rho'$.
There exist some $g\in\G$ and $k\in\R^{d_2}$ such that $g\cdot\rho=\chi_k\rho'$.
Let $h\in\P$ such that $g\T\F=h\T\F$.
It holds $I_{d_\rho}=(g\cdot\rho)|_{\T^m}=(\chi_k\rho')|_{\T^m}=\chi_k|_{\T^m}I_{d_{\rho'}}$.
This implies $\chi_k|_{\T^m}=1$ and thus, $k\in(\dualL/m)$ by Lemma~\ref{Lemma:VNK}.
Let $l\in K$ such that $l\dualL=k\dualL$.
We have
\[h\cdot\rho=g\cdot\rho=\chi_k\rho'=\chi_l\rho',\]
where we used \eqref{eq:ActionTrivial} in the first step and that $\chi_{k-l}=1$ since $k-l\in\dualL$ in the last step.

The other direction of the assertion is trivial.
\end{proof}
\begin{Corollary}\label{Corollary:finite-rep-set}
The set $\dual{\T\F}/{\simrg}$ is finite.
\end{Corollary}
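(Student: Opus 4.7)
The plan is to combine parts \ref{item:aaa} and \ref{item:bbb} of Lemma~\ref{Lemma:RepSys} with the existence of a suitable exponent $m \in \MM$ from Theorem~\ref{Theorem:Ring} (or just Lemma~\ref{Lemma:MMnontrivial}).

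First, I would fix $m\in\N$ with $\MM=m\N$, so that $\T^m\triangleleft\G$ and in particular $\T^m\triangleleft\T\F$, making the quotient $(\T\F)_m=\T\F/\T^m$ a well-defined group. By Remark~\ref{Remark:tk}\ref{item:tk} the index $[\T\F:\T^m]$ is finite, so $(\T\F)_m$ is a finite group, and hence its dual $\dual{(\T\F)_m}$ is finite (a finite group has only finitely many inequivalent irreducible unitary representations).

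Next, I would apply Lemma~\ref{Lemma:RepSys}\ref{item:bbb}: the bijection $\dual{(\T\F)_m}\to\{\rho\in\dual{\T\F}\mid \rho|_{\T^m}=I_{d_\rho}\}$ via pullback along the natural quotient map transfers this finiteness to the set of representations of $\T\F$ that are trivial on $\T^m$. Hence $\{\rho\in\dual{\T\F}\mid \rho|_{\T^m}=I_{d_\rho}\}$ is finite.

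Finally, I would invoke Lemma~\ref{Lemma:RepSys}\ref{item:aaa}: every representation set of $\{\rho\in\dual{\T\F}\mid \rho|_{\T^m}=I_{d_\rho}\}/{\simrg}$ is already a representation set for the whole quotient $\dual{\T\F}/{\simrg}$. Since the underlying set is finite, any such representation set is finite, and therefore $\dual{\T\F}/{\simrg}$ is finite as claimed. There is no real obstacle here: the entire work has already been done in assembling $m\in\MM$ and proving Lemma~\ref{Lemma:RepSys}, so this corollary is essentially a direct consequence.
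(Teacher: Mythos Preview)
Your proposal is correct and follows exactly the approach of the paper, which simply states that the claim is clear by Lemma~\ref{Lemma:RepSys}. You have merely unpacked the implicit reasoning: the existence of $m$ with $\MM=m\N$, the finiteness of $(\T\F)_m$, the bijection in part~\ref{item:bbb}, and the fact from part~\ref{item:aaa} that a representation set for the smaller quotient already serves for all of $\dual{\T\F}/{\simrg}$.
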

\begin{proof}
This is clear by Lemma~\ref{Lemma:RepSys}.
\end{proof}
\begin{Definition}\label{Definition:Grho}
For all $\rho\in\dual{\T\F}$ we define the set
\[\G_\rho:=\set[\Big]{\iso{\rot(\pi(g))}k}{g\in\G,k\in\R^{d_2}:g\cdot\rho=\chi_k\rho}\subset\E(d_2),\]
where $\pi\colon\G\to\SG$ is the natural surjective homomorphism.
\end{Definition}
\begin{Proposition}\label{Proposition:SpaceGroupLattice}
For all $\rho\in\dual{\T\F}$ the set $\G_\rho$ is a space group and it holds
\[\dualL\le\set[\big]{k\in\R^{d_2}}{\iso{I_{d_2}}k\in\G_\rho}\le\dualL/m,\]
where $m\in\N$ is such that $\MM=m\N$.
\end{Proposition}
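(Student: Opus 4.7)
The plan is to check first that $\G_\rho$ is a subgroup of $\E(d_2)$, then to establish the two lattice inclusions, and finally to deduce the space-group property via Theorem~\ref{Theorem:spacegroup}.

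For the subgroup property, I would combine the relations $g_i \cdot \rho = \chi_{k_i}\rho$ for $i=1,2$ with the identity $g \cdot \chi_k = \chi_{\rot(\pi(g))k}$ from Lemma~\ref{Lemma:characters} to show $(g_1 g_2) \cdot \rho = \chi_{k_1 + \rot(\pi(g_1)) k_2}\rho$. Together with multiplicativity of $\rot\circ\pi$, this matches the Euclidean group law $\iso{A_1}{k_1}\iso{A_2}{k_2} = \iso{A_1 A_2}{k_1 + A_1 k_2}$, establishing closure. The identity $\iso{I_{d_2}}{0}$ is witnessed by $g=\id$, and inverses are obtained by applying the action of $g^{-1}$ to the defining relation, giving $g^{-1}\cdot\rho=\chi_{-A^{-1}k}\rho$.

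For the lower inclusion $\dualL\le\{k : \iso{I_{d_2}}{k} \in \G_\rho\}$, note that for $k\in\dualL$ every $h\in\T\F$ satisfies $\trans(\pi(h))\in\LL$, so $\chi_k(h)=1$ by definition of the dual lattice; hence $\chi_k\rho=\rho=\id\cdot\rho$, placing $\iso{I_{d_2}}{k}$ in $\G_\rho$. For the upper inclusion, suppose $\iso{I_{d_2}}{k}\in\G_\rho$; then the witnessing $g\in\G$ satisfies $\rot(\pi(g))=I_{d_2}$, hence $g\in\T\F$. Invoking Proposition~\ref{Proposition:center2}, the subgroup $\T^m$ is central in $\T\F$, so $g$ commutes with every $s\in\T^m$ and $(g\cdot\rho)(s)=\rho(s)$. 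Comparing with $(g\cdot\rho)(s)=\chi_k(s)\rho(s)$ and cancelling the unitary (hence invertible) operator $\rho(s)$ yields $\chi_k|_{\T^m}=1$, whence $k\in\dualL/m$ by Lemma~\ref{Lemma:VNK}.

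Since $\dualL$ spans $\R^{d_2}$, the lower bound already supplies $d_2$ linearly independent translations in $\G_\rho$, so by Theorem~\ref{Theorem:spacegroup} it remains only to verify discreteness. This follows because the point group of $\G_\rho$ is contained in the finite set $\rot(\SG)$ (finite by Theorem~\ref{Theorem:SpaceGroup}\ref{item:bbbb}), while its translation subgroup is contained in the discrete set $\dualL/m$. The main obstacle will be the upper lattice bound: it is a short argument, but it is precisely here that the role of $m\in\MM$---ensuring $\T^m$ is central in $\T\F$---enters essentially, since without this centrality the equality $g\cdot\rho=\chi_k\rho$ for $g\in\T\F$ would not force $\chi_k$ to be trivial on $\T^m$.
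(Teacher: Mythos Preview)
Your overall strategy matches the paper's, but there is a genuine gap in the upper-inclusion step. When you write $(g\cdot\rho)(s)=\chi_k(s)\rho(s)$ and then ``cancel'' $\rho(s)$, you are treating $g\cdot\rho=\chi_k\rho$ as a pointwise equality of maps. However, $\rho\in\dual{\T\F}$ is an \emph{equivalence class}, so the defining relation only guarantees an intertwiner $T\in\U(d_\rho)$ with $T^H(g\cdot\tilde\rho)(h)T=\chi_k(h)\tilde\rho(h)$ for all $h\in\T\F$ and some fixed representative $\tilde\rho$. Your observation that $g$ commutes with each $s\in\T^m$ does give $(g\cdot\tilde\rho)(s)=\tilde\rho(s)$, but substituting this leaves $T^H\tilde\rho(s)T=\chi_k(s)\tilde\rho(s)$, and you cannot simply cancel $\tilde\rho(s)$ unless it commutes with $T$.

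The paper closes this gap via Schur's lemma (Proposition~\ref{Proposition:commutant}): since $\T^m$ is central in $\T\F$ (Proposition~\ref{Proposition:center2}) and $\tilde\rho$ is \emph{irreducible}, each $\tilde\rho(s)$ is a scalar multiple of $I_{d_\rho}$; then conjugation by $T$ is harmless and $\chi_k(s)=1$ follows. (Equivalently, the paper first uses $g\in\T\F$ together with \eqref{eq:ActionTrivial} to reduce to $\rho=\chi_k\rho$ in $\dual{\T\F}$, and then applies the Schur argument on $\T^m$.) Irreducibility of $\rho$ is thus the essential missing ingredient in your argument; centrality of $\T^m$ alone does not force $\chi_k|_{\T^m}=1$. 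The remaining parts of your proposal---the subgroup verification, the lower inclusion $\dualL\le\trans(\G_\rho\cap\Trans(d_2))$, and the discreteness/space-group conclusion via Theorem~\ref{Theorem:spacegroup}---are correct and essentially the same as in the paper.
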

\begin{proof}
Let $\rho\in\dual{\T\F}$ and $m\in\N$ such that $\MM=m\N$.
First we show that $\G_\rho$ is a subgroup of $\E(d_2)$.
Let $g_1,g_2\in\G_\rho$.
We have to show that $g_1g_2^{-1}\in\G_\rho$.
Let $\pi\colon\G\to\SG$ be the natural surjective homomorphism.
For all $i\in\{1,2\}$ let $h_i\in\G$ and $k_i\in\R^{d_2}$ such that $g_i=\iso{\rot(\pi(h_i))}{k_i}$ and $h_i\cdot\rho=\chi_{k_i}\rho$.
It holds
\[(h_1h_2^{-1})\cdot\rho=h_1\cdot(h_2^{-1}\cdot\rho)=h_1\cdot((h_2^{-1}\cdot\chi_{-k_2})\rho)=((h_1h_2^{-1})\cdot\chi_{-k_2})(h_1\cdot\rho)=\chi_{k_1-\rot(\pi(h_1h_2^{-1}))k_2}\rho\]
and thus,
\[g_1g_2^{-1}=\iso{\rot(\pi(h_1h_2^{-1}))}{k_1-\rot(\pi(h_1h_2^{-1}))k_2}\in\G_\rho.\]
Let
\[\H:=\G_\rho\cap\Trans(d_2)\]
be the group of all translations of $\G_\rho$.
It is clear that $\trans(\H)=\set{k\in\R^{d_2}}{\iso{I_{d_2}}k\in\G_\rho}$.

Now we show that $\trans(\H)\le\dualL/m$.
Let $k\in\trans(\H)$, \ie $\iso{I_{d_2}}k\in\G_\rho$.
There exists some $g\in\G$ such that $g\cdot\rho=\chi_k\rho$ and $\rot(\pi(g))=I_{d_2}$.
The latter implies $\pi(g)\in\T_\SG$ and thus, $g\in\T\F$.
By \eqref{eq:ActionTrivial} we have $\rho=\chi_k\rho$.
Let $\tilde\rho$ be a representative of $\rho$.
There exists some $T\in\U(d_\rho)$ such that $T^H\tilde\rho(g)T=\chi_k(g)\tilde\rho(g)$ for all $g\in\T\F$.
Moreover, by Proposition~\ref{Proposition:center2} the set $\T^m$ is a subset of the center of $\T\F$ and hence, by Proposition~\ref{Proposition:commutant} $\tilde\rho(g)$ is a scalar multiple of $I_{d_\rho}$ for all $g\in\T^m$.
Hence, we have $\chi_k(g)=1$ for all $g\in\T^m$ and $k\in\dualL/m$ by Lemma~\ref{Lemma:VNK}.

Now we show that $\dualL\le\trans(\H)$.
Let $k\in\dualL$.
By Lemma~\ref{Lemma:VNK} we have $\chi_k|_\T=1$.
Since we also have $\chi_k|_\F=1$, we have $\chi_k=1$.
Thus we have $\id_\G\cdot\rho=\chi_k\rho$ and $\iso{I_{d_2}}k\in\H$, \ie $k\in\trans(\H)$.

Now we show that $\G_\rho$ is discrete.
Since $\trans(\H)$ is a subgroup of $\dualL/m$, the group $\H$ is discrete.
Since $\rot(\G_\rho)$ is a subgroup of the finite group $\rot(\SG)$, the index $\abs{\G_\rho:\H}=\abs{\rot(\G_\rho)}$ is finite. Hence, the group $\G_\rho$ is discrete (see, \eg, \cite[Theorem 7.1]{Opechowski1986}).
Since $\dualL$ is a subgroup of $\trans(\H)$, the group $\G_\rho$ contains $d_2$ linearly independent translations.
By Theorem \ref{Theorem:spacegroup} the group $\G_\rho$ is a space group.
\end{proof}
\begin{Lemma}\label{Lemma:VN}
For all $N\in \MM$ and $\rho\in\dual{\T\F}$ such that $\rho|_{\T^N}=I_{d_\rho}$, the set $\dualL/N$ is invariant under $\G_\rho$, \ie $\set{g\cdot k}{g\in\G_\rho,k\in\dualL/N}=\dualL/N$.
\end{Lemma}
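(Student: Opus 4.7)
My plan is to prove $\set{h\cdot k}{h\in\G_\rho,k\in\dualL/N}=\dualL/N$ by showing both inclusions. The inclusion $\supseteq$ is immediate since $\id_{\E(d_2)}\in\G_\rho$ acts trivially. For $\subseteq$, I pick an arbitrary $h\in\G_\rho$, which by Definition~\ref{Definition:Grho} has the form $h=\iso{\rot(\pi(g))}{k_0}$ for some $g\in\G$ and $k_0\in\R^{d_2}$ with $g\cdot\rho=\chi_{k_0}\rho$, together with an arbitrary $l\in\dualL/N$, and I aim to show $h\cdot l=\rot(\pi(g))l+k_0\in\dualL/N$. I will do this by establishing $k_0\in\dualL/N$ and $\rot(\pi(g))l\in\dualL/N$ separately, in both cases reducing the claim via Lemma~\ref{Lemma:VNK} to the triviality of an appropriate character on $\T^N$.

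For the first part, I will evaluate the identity $g\cdot\rho=\chi_{k_0}\rho$ on an arbitrary $t\in\T^N$. The left-hand side is $\rho(g^{-1}tg)$, and since $N\in\MM$ means $\T^N\triangleleft\G$, we have $g^{-1}tg\in\T^N$; combined with the standing hypothesis $\rho|_{\T^N}=I_{d_\rho}$, this collapses the left side to $I_{d_\rho}$, whereas the right side is $\chi_{k_0}(t)I_{d_\rho}$. Comparing, $\chi_{k_0}|_{\T^N}=1$, and Lemma~\ref{Lemma:VNK} then yields $k_0\in\dualL/N$.

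For the second part, Lemma~\ref{Lemma:characters} gives $\chi_{\rot(\pi(g))l}=g\cdot\chi_l$. For any $t\in\T^N$, $(g\cdot\chi_l)(t)=\chi_l(g^{-1}tg)$, and again by the normality of $\T^N$ in $\G$ together with $\chi_l|_{\T^N}=1$ (which holds because $l\in\dualL/N$ by Lemma~\ref{Lemma:VNK}), this evaluates to $1$. So $\chi_{\rot(\pi(g))l}|_{\T^N}=1$, and Lemma~\ref{Lemma:VNK} delivers $\rot(\pi(g))l\in\dualL/N$. Adding the two contributions gives $h\cdot l\in\dualL/N$, completing the argument. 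I do not anticipate a real obstacle here: the normality $\T^N\triangleleft\G$, Lemma~\ref{Lemma:characters} and Lemma~\ref{Lemma:VNK} are precisely the three ingredients needed to translate the action of $\G_\rho$ on $\R^{d_2}$ into conjugation behaviour on $\T\F$. The only mildly subtle point is that the definition of $\G_\rho$ leaves $k_0$ ambiguous modulo $\dualL$, but this ambiguity lives inside $\dualL/N$, so it does not affect the conclusion.
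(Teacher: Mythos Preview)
Your proof is correct and follows essentially the same route as the paper's: both arguments use the normality $\T^N\triangleleft\G$ to reduce restriction to $\T^N$ under the $\G$-action to the trivial one, then invoke Lemma~\ref{Lemma:VNK}. The only cosmetic difference is that the paper bundles your two parts into a single line via $\chi_{h\cdot l}=(g\cdot\chi_l)\chi_{k_0}$ and concludes $\chi_{h\cdot l}|_{\T^N}=1$ at once, whereas you treat the summands $\rot(\pi(g))l$ and $k_0$ separately.
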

\begin{proof}
Let $N\in \MM$ and $\rho\in\dual{\T\F}$ such that $\rho|_{\T^N}=I_{d_\rho}$.
Let $k\in\dualL/N$ and $g\in\G_\rho$.
We have to show that $g\cdot k\in\dualL/N$.
Let $\pi\colon\G\to\SG$ be the natural surjective homomorphism.
There exist some $h\in\G$ and $l\in\R^{d_2}$ such that $g=\iso{\rot(\pi(h))}l$ and $h\cdot\rho=\chi_l\rho$.
Since $\rho|_{\T^N}=I_{d_\rho}=(h\cdot\rho)|_{\T^N}$, we have $\chi_l|_{\T^N}=1$.
We have
\[\chi_{g\cdot k}=\chi_{\rot(\pi(h))k+l}=(h\cdot\chi_k)\chi_l\]
and thus, $\chi_{g\cdot k}|_{\T^N}=1$.
By Lemma~\ref{Lemma:VNK} we have $g\cdot k\in\dualL/N$.
\end{proof}
\begin{Definition}
Let $\H$ be a subgroup of $\E(n)$.
Then the set of all orbits of $\R^n$ under the action of $\H$ is written as $\R^n/\H$ and is called the \emph{quotient of the action} or \emph{orbit space}.
\end{Definition}
\begin{Remark}
If a group $\H<\E(n)$ is discrete, then the quotient space $\R^n/\H$ equipped with the \emph{orbit space distance function}
\[\R^n/\H\times\R^n/\H\to[0,\infty),\qquad (x,y)\mapsto\dist(x,y)\]
is a metric space whose topology is equal to the \emph{quotient topology}, see, \eg,  \cite[\S6.6]{Ratcliffe2006}.
\end{Remark}
We are finally in a position to state and prove our main results on the structure of $\Ind_{\T\F}^\G(\dual{\T\F})$. Note that the set $R$ in the following theorems is finite due to Corollary~\ref{Corollary:finite-rep-set}.
\begin{Theorem}\label{Theorem:RepSet}
Let $R$ be a representation set of $\dual{\T\F}/{\simrg}$.
Then, the map
\begin{align*}
&\bigsqcup_{\rho\in R}\R^{d_2}/\G_\rho\to\Ind_{\T\F}^\G(\dual{\T\F}),\\
&(\G_\rho\cdot k,\rho)\mapsto\Ind_{\T\F}^\G(\chi_k\rho),
\end{align*}
where $\bigsqcup$ is the disjoint union, is bijective.
\end{Theorem}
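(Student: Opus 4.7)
The plan is to split the verification into three standard parts: well-definedness, surjectivity, and injectivity. The principal ingredient, to be pulled from Section~\ref{subsection:harmonic}, is the Mackey-type statement that for a normal subgroup $\H\triangleleft\G$ of finite index and $\tau,\tau'\in\dual\H$, the induced representations $\Ind_\H^\G(\tau)$ and $\Ind_\H^\G(\tau')$ are equivalent if and only if $\tau$ and $\tau'$ lie in the same $\G$-orbit on $\dual\H$. In the present setting $\T\F\triangleleft\G$ has finite index by Lemma~\ref{Lemma:TFP}, and $R$ is finite by Corollary~\ref{Corollary:finite-rep-set}.

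For well-definedness I would fix $\rho\in R$ and $k,k'\in\R^{d_2}$ with $k'\in\G_\rho\cdot k$, unwind Definition~\ref{Definition:Grho} to produce $h\in\G$ and $l\in\R^{d_2}$ with $h\cdot\rho=\chi_l\rho$ and $k'=\rot(\pi(h))k+l$, and then invoke Lemma~\ref{Lemma:characters} for the short computation $\chi_{k'}\rho=(h\cdot\chi_k)(h\cdot\rho)=h\cdot(\chi_k\rho)$. Since $\G$-conjugate representations of $\T\F$ have isomorphic induced representations, the image of $(\G_\rho\cdot k,\rho)$ is independent of the choice of $k$ within the orbit, so the map is well-defined.

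For surjectivity I would take any $\Ind_{\T\F}^\G(\sigma)\in\Ind_{\T\F}^\G(\dual{\T\F})$ and use the hypothesis that $R$ is a representation set of $\dual{\T\F}/{\simrg}$ to extract $\rho\in R$, $g\in\G$, and $k\in\R^{d_2}$ with $g\cdot\sigma=\chi_k\rho$; conjugation-invariance of induction then gives $\Ind_{\T\F}^\G(\sigma)\cong\Ind_{\T\F}^\G(\chi_k\rho)$, exhibiting $(\G_\rho\cdot k,\rho)$ as a preimage.

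Injectivity is the step I expect to be the main obstacle, as it is where the Mackey-type equivalence is actually used. Assuming $\Ind_{\T\F}^\G(\chi_k\rho)\cong\Ind_{\T\F}^\G(\chi_{k'}\rho')$ for $\rho,\rho'\in R$, the cited fact yields $g\in\G$ with $g\cdot(\chi_k\rho)=\chi_{k'}\rho'$; expanding with Lemma~\ref{Lemma:characters} gives $g\cdot\rho=\chi_{k'-\rot(\pi(g))k}\rho'$, whence $\rho\simrg\rho'$ and then $\rho=\rho'$ since $R$ contains exactly one representative of each $\simrg$-class. Setting $l:=k'-\rot(\pi(g))k$, the element $\iso{\rot(\pi(g))}{l}$ belongs to $\G_\rho$ by its very definition and sends $k$ to $k'$, so $\G_\rho\cdot k=\G_\rho\cdot k'$. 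Everything beyond the Mackey input is routine bookkeeping with the character product and action formulas of Lemma~\ref{Lemma:characters} and the explicit description of $\G_\rho$.
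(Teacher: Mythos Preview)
Your proposal is correct and follows essentially the same approach as the paper's own proof: both establish well-definedness, injectivity, and surjectivity by invoking Proposition~\ref{Proposition:Induced} (the Mackey-type bijection $\dual{\T\F}/\G\to\Ind_{\T\F}^\G(\dual{\T\F})$) as the key external input, and then unwind the definitions of $\G_\rho$ and $\simrg$ via the character computations of Lemma~\ref{Lemma:characters} in exactly the way you describe. The only cosmetic difference is the order in which injectivity and surjectivity are treated.
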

\begin{proof}
Let $R$ be a representation set of $\dual{\T\F}/{\simrg}$.
We define the map
\begin{align*}
\phi\colon\bigsqcup_{\rho\in R}\R^{d_2}/\G_\rho\to\Ind(\dual{\T\F}), \qquad
(\G_\rho\cdot k,\rho)\mapsto\Ind(\chi_k\rho).
\end{align*}
First we show that $\phi$ is well-defined.
Let $\rho\in R$, $k,k'\in\R^{d_2}$ and $g\in\G_\rho$ such that $k'=g\cdot k$.
Let $\pi\colon\G\to\SG$ be the natural surjective homomorphism.
There exist some $h\in\G$ and $l\in\R^{d_2}$ such that $g=\iso{\rot(\pi(h))}l$ and $h\cdot\rho=\chi_l\rho$.
We have
\[h\cdot(\chi_k\rho)=(h\cdot\chi_k)(h\cdot\rho)=\chi_{\rot(\pi(h))k+l}\rho=\chi_{k'}\rho\]
and thus, $\Ind(\chi_k\rho)=\Ind(\chi_{k'}\rho)$ by Proposition~\ref{Proposition:Induced}.

Now we show that $\phi$ is injective.
Let $\rho,\rho'\in\RR$ and $k,k'\in\R^{d_2}$ such that $\Ind(\chi_k\rho)=\Ind(\chi_{k'}\rho')$.
We have to show that $\rho=\rho'$ and $\G_\rho\cdot k=\G_{\rho'}\cdot k'$.
By Proposition~\ref{Proposition:Induced} there exists some $g\in\G$ such that $g\cdot(\chi_k\rho)=\chi_{k'}\rho'$.
This is equivalent to $g\cdot\rho=\chi_{k'-\rot(\pi(g))k}\rho'$, which implies $\rho\simrg\rho'$ and thus, $\rho=\rho'$.
This implies that $\iso{\rot(\pi(g))}{k'-\rot(\pi(g))k}\in\G_\rho$ and thus,
\[\G_\rho\cdot k=\G_{\rho'}\cdot\parens[\big]{\iso{\rot(\pi(g))}{k'-\rot(\pi(g))k}\cdot k}=\G_{\rho'}\cdot k'.\]
Now we show that $\phi$ is surjective.
Let $\rho\in\dual{\T\F}$.
Let $\rho'\in R$ such that $\rho\simrg\rho'$.
There exist some $g\in\G$ and $k\in\R^{d_2}$ such that $g\cdot\rho=\chi_k\rho'$.
By Proposition~\ref{Proposition:Induced} we have
\[\phi((\G_{\rho'}\cdot k,\rho'))=\Ind(\chi_k\rho')=\Ind(g\cdot\rho)=\Ind\rho.\qedhere\]
\end{proof}
There is also a version of this result for periodic representations. 
\begin{Theorem}\label{Theorem:RepSys}
Let $R$ be a representation set of $\set{\rho\in\dual{\T\F}}{\rho|_{\T^m}=I_{d_\rho}}/{\simrg}$, where $m\in\N$ is such that $\MM=m\N$.
Then the maps
\begin{enumerate}
\item\label{item:22} $\begin{aligned}[t]
&\bigsqcup_{\rho\in R}\set{k/N}{k\in\dualL,N\in \MM}/\G_\rho\to\Ind(\set{\rho\in\dual{\T\F}}{\exists\,N\in \MM:\rho|_{\T^N}=I_{d_\rho}})\\
&(\G_\rho\cdot (k/N),\rho)\mapsto\Ind(\chi_{k/N}\rho)
\end{aligned}$
\item\label{item:33} $\begin{aligned}[t]
&\bigsqcup_{\rho\in R}(\dualL/N)/\G_\rho\to\Ind(\set{\rho\in\dual{\T\F}}{\rho|_{\T^N}=I_{d_\rho}})\\
&(\G_\rho\cdot k,\rho)\mapsto\Ind(\chi_k\rho),
\end{aligned}$
\end{enumerate}
where $\bigsqcup$ is the disjoint union, $\Ind=\Ind_{\T\F}^\G$ and $N\in \MM$ in \ref{item:33} is arbitrary, are bijective.
\end{Theorem}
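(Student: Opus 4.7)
The plan is to mirror the proof of Theorem~\ref{Theorem:RepSet} while carefully tracking the periodicity condition $\rho|_{\T^N} = I_{d_\rho}$ through the $\G$- and character-actions on $\dual{\T\F}$. Throughout I write $\phi_i$ for the map in item \ref{item:22} and \ref{item:33} respectively, and I use repeatedly that $N \in \MM = m\N$ implies $m \mid N$, hence $\T^N \subset \T^m$ (cf.\ Lemma~\ref{Lemma:helptranslationgroup}), so every $\rho \in R$ automatically satisfies $\rho|_{\T^N} = I_{d_\rho}$.

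\textbf{Well-definedness.} For part \ref{item:22}, given $\rho \in R$, $k \in \dualL$ and $N \in \MM$, I would set $N' := \operatorname{lcm}(m,N) \in \MM$ and observe that $\chi_{k/N}|_{\T^{N'}} = 1$ by Lemma~\ref{Lemma:VNK} (since $(N'/N)k \in \dualL$), while $\rho|_{\T^{N'}} = I_{d_\rho}$ because $m \mid N'$; hence $\chi_{k/N}\rho$ lies in the claimed target set. For part \ref{item:33} the same computation with $N' = N$ works, using $m \mid N$ to get $\rho|_{\T^N} = I_{d_\rho}$. The $\G_\rho$-invariance of $\dualL/N$ (and thus of $\bigcup_{N \in \MM}\dualL/N$) follows directly from Lemma~\ref{Lemma:VN}. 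Independence of the orbit representative then reduces to the computation in Theorem~\ref{Theorem:RepSet}: if $k'/N' = g \cdot (k/N)$ for some $g = \iso{\rot(\pi(h))}{l} \in \G_\rho$ with $h \cdot \rho = \chi_l \rho$, then $h \cdot (\chi_{k/N}\rho) = \chi_{k'/N'}\rho$, so Proposition~\ref{Proposition:Induced} yields equality of the induced representations.

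\textbf{Injectivity.} Suppose $\Ind(\chi_{k/N}\rho) = \Ind(\chi_{k'/N'}\rho')$ with $\rho,\rho' \in R$. By Proposition~\ref{Proposition:Induced} there exists $g \in \G$ with $g \cdot (\chi_{k/N}\rho) = \chi_{k'/N'}\rho'$, equivalently
\[ g \cdot \rho = \chi_{k'/N' - \rot(\pi(g))(k/N)}\, \rho'. \]
This forces $\rho \simrg \rho'$ and hence $\rho = \rho'$. Then by Definition~\ref{Definition:Grho} the element $\iso{\rot(\pi(g))}{k'/N' - \rot(\pi(g))(k/N)}$ belongs to $\G_\rho$, and it carries $k/N$ to $k'/N'$; thus the orbits agree.

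\textbf{Surjectivity.} This is the step where the new information enters. Given $\rho'' \in \dual{\T\F}$ with $\rho''|_{\T^N} = I_{d_{\rho''}}$, I would pick $\rho \in R$ with $\rho \simrg \rho''$ (possible since $R$ is in particular a representation set of all of $\dual{\T\F}/\simrg$ by Lemma~\ref{Lemma:RepSys}\ref{item:aaa}), together with $g \in \G$ and $l \in \R^{d_2}$ such that $g \cdot \rho = \chi_l \rho''$. The crucial observation is then: since $\T^N \triangleleft \G$ we have $(g \cdot \rho)|_{\T^N} = I_{d_\rho}$, while $\rho''|_{\T^N} = I_{d_{\rho''}}$, so $\chi_l|_{\T^N} = 1$, which by Lemma~\ref{Lemma:VNK} forces $l \in \dualL/N$. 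Writing $l = k/N$ with $k \in \dualL$, we obtain $\phi_1((\G_\rho \cdot (k/N), \rho)) = \Ind(\chi_{k/N}\rho) = \Ind(g \cdot \rho) = \Ind(\rho'')$ by Proposition~\ref{Proposition:Induced}. For part \ref{item:33} this exact argument, applied to the fixed $N$, produces $l \in \dualL/N$, establishing surjectivity of $\phi_2$.

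The main obstacle is the surjectivity step — specifically, ensuring that the intertwining character $\chi_l$ produced by the equivalence $\rho \simrg \rho''$ actually lies in $\dualL/N$ (and not merely in $\R^{d_2}$). This is not automatic from Lemma~\ref{Lemma:RepSys}, but it follows cleanly from combining the normality of $\T^N$ in $\G$ with Lemma~\ref{Lemma:VNK}. Once this is in place, part \ref{item:33} is just the restriction of part \ref{item:22} to a fixed $N$, and both statements follow with essentially the same bookkeeping as in Theorem~\ref{Theorem:RepSet}.
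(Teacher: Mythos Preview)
Your approach mirrors the paper's proof essentially step for step: reduce well-definedness and injectivity to Theorem~\ref{Theorem:RepSet} via Lemma~\ref{Lemma:VN}, and for surjectivity use Lemma~\ref{Lemma:RepSys}\ref{item:aaa} together with Lemma~\ref{Lemma:VNK} to pin the intertwining character down to $\dualL/N$.

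There is, however, a direction slip in your surjectivity argument. You write $g\cdot\rho=\chi_l\rho''$ with $\rho\in R$ and then claim
\[
\phi_1((\G_\rho\cdot l,\rho))=\Ind(\chi_l\rho)=\Ind(g\cdot\rho)=\Ind(\rho'').
\]
But $g\cdot\rho$ equals $\chi_l\rho''$, not $\chi_l\rho$, so the first equality fails; and Proposition~\ref{Proposition:Induced} gives $\Ind(g\cdot\rho)=\Ind(\rho)$, not $\Ind(\rho'')$. The fix is immediate: choose the relation in the other direction, $g\cdot\rho''=\chi_l\rho$ (which $\simrg$ equally allows). Then $(g\cdot\rho'')|_{\T^N}=I_{d_{\rho''}}$ by normality of $\T^N$, and $\rho|_{\T^N}=I_{d_\rho}$ since $\T^N\subset\T^m$, so $\chi_l|_{\T^N}=1$ and $l\in\dualL/N$ as before; now $\Ind(\chi_l\rho)=\Ind(g\cdot\rho'')=\Ind(\rho'')$ genuinely holds. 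This is exactly how the paper argues (with the roles of your $\rho$ and $\rho''$ swapped in notation). Incidentally, since $N\in\MM=m\N$ already forces $m\mid N$, your $N'=\operatorname{lcm}(m,N)$ is simply $N$, so that detour is unnecessary.
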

\begin{proof}
Let $m\in\N$ such that $\MM=m\N$ and $R$ be a representation set of $\set{\rho\in\dual{\T\F}}{\rho|_{\T^m}=I_{d_\rho}}/{\simrg}$.
By Lemma~\ref{Lemma:RepSys} the set $R$ is a representation set of $\dual{\T\F}/{\simrg}$.

\ref{item:22} We define the map
\begin{align*}
\psi&\colon\bigsqcup_{\rho\in R}\set{k/N}{k\in\dualL,N\in \MM}/\G_\rho\to\Ind(\set{\rho\in\dual{\T\F}}{\exists\,N\in \MM:\rho|_{\T^N}=I_{d_\rho}})\\
&(\G_\rho\cdot(k/N),\rho)\mapsto\Ind(\chi_{k/N}\rho).
\end{align*}
First we show that $\psi$ is well-defined.
Let $\rho\in R$, $k\in\dualL$ and $N\in \MM$.
Since $\T^N\subset\T^m$ and by Lemma~\ref{Lemma:VNK}, we have $(\chi_{k/N}\rho)|_{\T^N}=I_{d_\rho}$.
By Lemma~\ref{Lemma:VN} for all $N\in \MM$ we have $(\dualL/N)/\G_\rho\subset\R^{d_2}/\G_\rho$ and thus, by Theorem~\ref{Theorem:RepSet} the map $\psi$ is well-defined.

Since the map of Theorem~\ref{Theorem:RepSet} is injective, also $\psi$ is injective.

It remains to show that $\psi$ is surjective.
Let $\rho\in\dual{\T\F}$ and $N\in \MM$ such that $\rho|_{\T^N}=I_{d_\rho}$.
There exists some $\rho'\in R$ such that $\rho\simrg\rho'$.
There exist some $g\in\G$ and $k\in\R^{d_2}$ such that $g\cdot\rho=\chi_k\rho'$.
We have $(g\cdot\rho)|_{\T^N}=I_{d_\rho}=\rho'|_{\T^N}$ and thus, $\chi_k|_{\T^N}=1$.
By Lemma~\ref{Lemma:VNK} we have $k\in\dualL/N$ and thus
\[\psi((\G_{\rho'}\cdot k,\rho'))=\Ind(\chi_k\rho')=\Ind(g\cdot\rho)=\Ind\rho,\]
by Proposition~\ref{Proposition:Induced}.
\ref{item:33} The proof is analogous to the proof of \ref{item:22}.
\end{proof}
As a direct consequence of the above theorems we obtain:
\begin{Corollary}\label{Corollary:Subreps-induced}
Let $R$ be as in Theorem~\ref{Theorem:RepSet}. For every $\sigma\in\dual\G$ there exists a $\rho\in R$ and a $k\in\R^{d_2}$ such that $\sigma$ is a subrepresentation of $\Ind_{\T\F}^\G(\chi_k\rho)$. If moreover $R$ is as in Theorem~\ref{Theorem:RepSys} and $\rho|_{\T^N}=I_{d_\rho}$ for an $N \in M_0$, then $k$ can be chosen in $\dualL/N$. 
\end{Corollary}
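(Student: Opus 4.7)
The plan is to combine classical Frobenius reciprocity for the finite-index normal subgroup $\T\F\triangleleft\G$ with the explicit parameterizations of induced representations given by Theorems~\ref{Theorem:RepSet} and~\ref{Theorem:RepSys}.

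First, I would recall that $\T\F\triangleleft\G$ has finite index (Lemma~\ref{Lemma:TFP}(ii),(iii)) and that $\G$ is virtually abelian (Remark~\ref{Remark:tk}(ii)), which ensures that every $\sigma\in\dual\G$ is finite-dimensional. Consequently the restriction $\sigma|_{\T\F}$ decomposes into finitely many irreducible unitary representations of $\T\F$ (Clifford's theorem). I would then pick any irreducible subrepresentation $\tau$ of $\sigma|_{\T\F}$; Frobenius reciprocity gives that the multiplicity of $\sigma$ in $\Ind_{\T\F}^\G(\tau)$ equals the multiplicity of $\tau$ in $\sigma|_{\T\F}$, which is positive. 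Since $\sigma$ is irreducible, a nonzero $\G$-intertwiner yields $\sigma\subset\Ind_{\T\F}^\G(\tau)$. With $R$ as in Theorem~\ref{Theorem:RepSet}, that theorem produces a unique $(\G_\rho\cdot k,\rho)\in\bigsqcup_{\rho\in R}\R^{d_2}/\G_\rho$ with $\Ind_{\T\F}^\G(\tau)=\Ind_{\T\F}^\G(\chi_k\rho)$, so $\sigma\subset\Ind_{\T\F}^\G(\chi_k\rho)$, establishing the first assertion.

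For the second part one switches to $R$ from Theorem~\ref{Theorem:RepSys}, so $\rho|_{\T^m}=I_{d_\rho}$ with $\MM=m\N$; in particular, for any $N\in\MM$ one has automatically $\rho|_{\T^N}=I_{d_\rho}$, since $\T^N=(\T^m)^{N/m}\subset\T^m$ because $\T^m$ is a group. The extra hypothesis $\rho|_{\T^N}=I_{d_\rho}$ is then read as the condition $\tau|_{\T^N}=I_{d_\tau}$ on the chosen irreducible subrepresentation, which together with $\rho|_{\T^N}=I_{d_\rho}$ is, by Lemma~\ref{Lemma:VNK}, equivalent to $\chi_k|_{\T^N}=1$, i.e.\ to $k\in\dualL/N$. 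Applying Theorem~\ref{Theorem:RepSys}(ii) instead of Theorem~\ref{Theorem:RepSet} now provides the parameterization $\Ind_{\T\F}^\G(\tau)=\Ind_{\T\F}^\G(\chi_k\rho)$ with $\rho\in R$ and $k\in\dualL/N$, and the conclusion follows exactly as in the first part.

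The main obstacle is justifying both the decomposition of $\sigma|_{\T\F}$ into irreducibles and the applicability of Frobenius reciprocity in this discrete, possibly non-finite setting. Both points are resolved by the finiteness of $[\G:\T\F]$ (Lemma~\ref{Lemma:TFP}) together with the finite-dimensionality of $\sigma$ guaranteed by the virtually abelian structure of $\G$ (Remark~\ref{Remark:tk}(ii)); beyond these ingredients, the argument is a direct reading of Theorems~\ref{Theorem:RepSet} and~\ref{Theorem:RepSys}.
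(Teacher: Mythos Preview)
Your approach is correct and matches the paper's, which simply cites Proposition~\ref{Proposition:Teildarstellung} (encapsulating your Frobenius reciprocity step) together with Theorems~\ref{Theorem:RepSet} and~\ref{Theorem:RepSys}. Your remark that $\rho|_{\T^N}=I_{d_\rho}$ is automatic once $R$ is as in Theorem~\ref{Theorem:RepSys} is correct---the intended hypothesis is a $\T^N$-periodicity assumption on $\sigma$ (equivalently on the chosen $\tau$), after which your direct appeal to Theorem~\ref{Theorem:RepSys}\ref{item:33} finishes the argument.
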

\begin{proof}
This is a direct consequence of Theorems~\ref{Theorem:RepSet} and~\ref{Theorem:RepSys} and Proposition~\ref{Proposition:Teildarstellung}.  
\end{proof}

We finally address the natural question to what extend the induced representations labeled by $ \bigsqcup_{\rho\in R}\R^{d_2}/\G_\rho$ from Theorem~\ref{Theorem:RepSet} are irreducible and exhaust the total dual space $\dual\G$.  To this end, we equip the orbit space $\R^{d_2}/\mathcal{H}$ of a $d_2$-dimensional space group $\mathcal{H}$ with the pushforward under the action of $\mathcal{H}$ of the $d_2$-dimensional Lebesgue-measure restricted to an associated fundamental domain. The dual space $\dual\G$ of $\G$ shall be endowed with the Plancherel measure, cf.\ Theorem~\ref{Theorem:Plancherel-measure}. Without loss of generality we restrict to groups of infinite order as for $|\G| < \infty$ one has $d_2=0$, $\T = \{\id\}$ and $\T\F = \G$. 
\begin{Theorem}\label{Theorem:Bijektion}
Suppose $\G$ is of infinite order.
Let $R$ be a representation set of $\set{\rho\in\dual{\T\F}}{\rho|_{\T^m}=I_{d_\rho}}/{\simrg}$, where $m\in\N$ is such that $\MM=m\N$. There are a null-set $N\subset\dual\G$ and null-sets $N_\rho\subset\R^{d_2}/\G_\rho$ for each $\rho\in R$ such that the mapping 
\begin{align*}
&\bigsqcup_{\rho\in R}(\R^{d_2}/\G_\rho)\setminus N_\rho\to\dual\G\setminus N\\
&(\G_\rho\cdot k,\rho)\mapsto\Ind_{\T\F}^\G(\chi_k\rho),
\end{align*}
is bijective. 
\end{Theorem}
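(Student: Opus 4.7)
The strategy is to combine Mackey's irreducibility criterion for induced representations from the finite-index normal subgroup $\T\F\triangleleft\G$ with the fact that $\G$ is virtually abelian (cf.\ Remark~\ref{Remark:tk}\ref{item:tk}) and hence of type I. The resulting Plancherel measure on $\dual\G$ can then be compared, via the parametrization of Theorem~\ref{Theorem:RepSet}, with the Lebesgue-based measures on $\bigsqcup_{\rho\in R}\R^{d_2}/\G_\rho$.

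First I would characterize irreducibility of $\Ind_{\T\F}^\G(\chi_k\rho)$: by Mackey's theorem for finite-index normal subgroups, this is equivalent to the isotropy
\[\mathcal{I}_{k,\rho}:=\set{g\T\F\in\G/\T\F}{g\cdot(\chi_k\rho)\simeq\chi_k\rho}\]
being trivial. Using $g\cdot(\chi_k\rho)=\chi_{\rot(\pi(g))k}(g\cdot\rho)$ from Lemma~\ref{Lemma:characters}, $g\T\F\in\mathcal{I}_{k,\rho}$ if and only if $g\cdot\rho=\chi_{k'}\rho$ in $\dual{\T\F}$ for some $k'$ (i.e.\ $\iso{\rot(\pi(g))}{k'}\in\G_\rho$) together with the condition $(\rot(\pi(g))-I_{d_2})k+k'\in\trans(\H)$, where $\H:=\G_\rho\cap\Trans(d_2)$ denotes the translation lattice identified in the proof of Proposition~\ref{Proposition:SpaceGroupLattice}. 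For every $g\notin\T\F$ one has $\rot(\pi(g))\neq I_{d_2}$, whence the set of such $k$ is a countable union of proper affine subspaces of $\R^{d_2}$ and thus Lebesgue-null. Forming the union over the finitely many non-trivial cosets $g\T\F$ yields a $\G_\rho$-invariant null set $N_\rho^0\subset\R^{d_2}$ whose image $N_\rho$ in $\R^{d_2}/\G_\rho$ is likewise null. Outside $N_\rho$ Mackey gives $\Ind_{\T\F}^\G(\chi_k\rho)\in\dual\G$, and injectivity on the restricted domain is then inherited from Theorem~\ref{Theorem:RepSet}.

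For surjectivity up to a Plancherel null set, by Corollary~\ref{Corollary:Subreps-induced} every $\sigma\in\dual\G$ is a subrepresentation of some $\Ind_{\T\F}^\G(\chi_k\rho)$; when $\G_\rho\cdot k\notin N_\rho$ irreducibility forces $\sigma=\Ind_{\T\F}^\G(\chi_k\rho)$, placing $\sigma$ in the image. The residual set $N\subset\dual\G$ therefore consists only of irreducible summands of $\Ind_{\T\F}^\G(\chi_k\rho)$ for exceptional $k\in N_\rho^0$, each such induced representation splitting into a finite direct sum indexed by $\dual{\mathcal{I}_{k,\rho}}$.

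The main obstacle will be to verify that this residual set $N$ is a Plancherel null set. My plan is to invoke the Mackey--Fell direct integral decomposition for type I extensions with finite quotient (cf.\ \cite{Mackey:58,KleppnerLipsman:72} and Theorem~\ref{Theorem:Mackey-machine}): the Plancherel measure on $\dual\G$ is realized as a direct integral whose base is the orbit space $\dual{\T\F}/(\G/\T\F)$ and whose base measure is, on each component $\R^{d_2}/\G_\rho$, absolutely continuous with respect to the Lebesgue-quotient measure equipping $\R^{d_2}/\G_\rho$. Since each $N_\rho$ is Lebesgue-null, its contribution to the Plancherel mass vanishes, and summing over the finite representation set $R$ yields that $N$ itself is Plancherel-null, completing the argument.
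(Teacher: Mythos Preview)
Your strategy is essentially the paper's: Mackey's irreducibility criterion for $\T\F\triangleleft\G$ to isolate the Lebesgue-null exceptional locus, injectivity from Theorem~\ref{Theorem:RepSet}, and then the Mackey machine to show the residual set $N\subset\dual\G$ is Plancherel-null.

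The one gap is in this last step. You apply the Mackey machine with base $\dual{\T\F}/(\G/\T\F)$ and assert that the resulting base measure is, in the parametrization by $\bigsqcup_\rho\R^{d_2}/\G_\rho$, absolutely continuous with respect to Lebesgue. But $\T\F$ is in general non-abelian (cf.\ Example~\ref{Example:choiceofD4}), so Theorem~\ref{Theorem:Mackey-machine} as stated does not apply, and the absolute-continuity claim amounts to first identifying the Plancherel measure on $\dual{\T\F}$---which is essentially what you are trying to establish one level up. The paper avoids this circularity by applying Theorem~\ref{Theorem:Mackey-machine} with the \emph{abelian} normal subgroup $\T^m\triangleleft\G$ instead: its dual is the torus $\R^{d_2}/(\dualL/m)$ with Haar measure Lebesgue, and one then checks (via the explicit null set $K=\set{k}{\exists A\in\rot(\SG)\setminus\{I_{d_2}\}:(A-I_{d_2})k\in\dualL/m}$ and Lemmas~\ref{Lemma:Lattice-Diff-k-k}--\ref{Lemma:K-Invariance}) that for $k\notin K$ the stabilizer $\G^{\chi_k|_{\T^m}}$ is exactly $\T\F$, so the Mackey machine outputs precisely the representations $\Ind_{\T\F}^\G(\chi_k\rho)$ with $\rho|_{\T^m}=I_{d_\rho}$. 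That reduction to an abelian normal subgroup is the step your argument is missing. A minor further point: the $\G_\rho$-invariance of your $N_\rho^0$ is correct but not automatic and needs a short verification analogous to Lemma~\ref{Lemma:K-Invariance}.
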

\begin{Remark}\label{Remark:Null-Set}
The proof shows that the $N_\rho$ can be chosen as $N_\rho= \G_\rho \cdot K$, where $K$ is the Lebesgue null-set 
\[K = \set[\Big]{k\in\R^{d_2}}{\exists A\in\rot(\SG)\setminus\{I_{d_2}\}:(A-I_{d_2})k\in\dualL/m}.\]
Also note that, by Theorem~\ref{Theorem:RepSet} and Corollary~\ref{Corollary:Subreps-induced}, for any $\sigma\in N$ there is a $\rho\in R$ and a $\G_\rho\cdot k\in N_\rho$ such that $\sigma$ is a subrepresentation of $\Ind_{\T\F}^\G(\chi_k\rho)$. 
\end{Remark}

\begin{Lemma}\label{Lemma:Lattice-Diff-k-k}
Let $K$ be as in Remark~\ref{Remark:Null-Set}. Suppose $g\in\G$ and $k,k'\in\R^{d_2}$ are such that $\rot(\pi(g))k-k' \in\dualL/m$, where $m\in\N$ is such that $\MM=m\N$. 
Then we have $k\in K$ if and only if $k'\in K$.
\end{Lemma}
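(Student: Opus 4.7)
The plan is to show that if $A \in \rot(\SG) \setminus \{I_{d_2}\}$ witnesses $k \in K$, then $A' := B A B^{-1}$ witnesses $k' \in K$, where $B := \rot(\pi(g)) \in \rot(\SG)$. Clearly $A'$ lies in the finite group $\rot(\SG)$ and is nontrivial iff $A$ is, so the only real content is to verify that $(A' - I_{d_2})k' \in \dualL/m$.

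First I would verify the following invariance: the orthogonal operators in $\rot(\SG)$ preserve $\dualL$, and hence preserve $\dualL/m$. Indeed, since $\rot(\SG)$ is the point group of the space group $\SG$, each element permutes $\LL = \trans(\T_\SG)$ as a lattice; orthogonality then gives $\scalar{Bx}{y} = \scalar{x}{B^{-1}y} \in \Z$ for $x \in \dualL$ and $y \in \LL$, so $B\dualL = \dualL$, and the same holds after dividing by $m$. This is the step I would want to be careful about but it is not really an obstacle.

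Next I would carry out the bookkeeping. Set $w := Bk - k' \in \dualL/m$ by hypothesis, so that $k' = Bk - w$. Then
\[
(A' - I_{d_2})k' \;=\; BAB^{-1}k' - k' \;=\; BA(k - B^{-1}w) - (Bk - w) \;=\; B(A - I_{d_2})k \;-\; (BAB^{-1} - I_{d_2})w.
\]
The first summand lies in $B \cdot (\dualL/m) \subseteq \dualL/m$ by the hypothesis $k \in K$ and the invariance step. The second summand lies in $\dualL/m$ because $w \in \dualL/m$ and $BAB^{-1} \in \rot(\SG)$ preserves $\dualL/m$. Hence $k' \in K$.

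For the converse I would simply apply the forward direction to $g^{-1}$ with the roles of $k$ and $k'$ swapped, using that the hypothesis then reads $\rot(\pi(g^{-1}))k' - k = B^{-1}k' - k = -B^{-1}w \in \dualL/m$, which holds by the invariance of $\dualL/m$ under $B^{-1} \in \rot(\SG)$. Thus the equivalence $k \in K \Leftrightarrow k' \in K$ follows. The main conceptual point — and not really an obstacle — is the conjugation trick $A \mapsto BAB^{-1}$ together with the lattice-invariance of the point group; everything else is bookkeeping.
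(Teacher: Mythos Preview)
Your proof is correct and essentially identical to the paper's: both use the conjugate $BAB^{-1}$ (the paper writes $BAB^T$, which is the same since $B$ is orthogonal), derive the same identity $(BAB^{-1}-I_{d_2})k' = B(A-I_{d_2})k - (BAB^{-1}-I_{d_2})w$, and handle the converse by applying the forward direction to $g^{-1}$. The only cosmetic difference is that you justify the $\rot(\SG)$-invariance of $\dualL/m$ directly via duality, whereas the paper invokes Lemma~\ref{Lemma:VN} with $\rho=\chi_0$.
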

\begin{proof}
Let $g\in\G$ and $k,k'\in\R^{d_2}$ such that $s\in\dualL/m$, where $B=\rot(\pi(g))$ and $s=Bk-k'$.
Assume $k \in K$, say $(A-I_{d_2})k\in\dualL/m$ with $A\in\rot(\SG)\setminus\{I_{d_2}\}$. Observe that also  $BAB^T \in \rot(\SG)\setminus\{I_{d_2}\}$ and that $\dualL/m$ is invariant under $\rot(\SG)$, since $\rot(\SG)\ltimes\{0_{d_2}\}<\G_{\chi_0}$ and by Lemma~\ref{Lemma:VN} with $\rho = \chi_0$. We thus get 
\[ (BAB^T-I_{d_2})k' 
%   = (BAB^T-I_{d_2})(Bk-s) 
   = B(A-I_{d_2}) k - BAB^T s + s \in\dualL/m, 
\]
\ie, $k' \in K$. If, conversely, $k' \in K$, then the same argument with $g$ replaced by $g^{-1}$ and $s$ replaced by $- B^T s$ yields $k \in K$.  
\end{proof}
As a consequence of this we have the following lemma. 
\begin{Lemma}\label{Lemma:K-Invariance}
The set $K$ defined in Remark~\ref{Remark:Null-Set} is invariant under $\G_\rho$ for any $\rho\in\dual{\T\F}$ with $\rho|_{\T^m}=I_{d_\rho}$, where $m\in\N$ is such that $\MM=m\N$.
\end{Lemma}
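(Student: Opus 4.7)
The plan is to take an arbitrary element $h\in\G_\rho$, write it as $h=\iso{B}{l}$ with $B=\rot(\pi(g))$ for some $g\in\G$ with $g\cdot\rho=\chi_l\rho$, and to reduce the claim $h\cdot k\in K$ (whenever $k\in K$) to the hypothesis of Lemma~\ref{Lemma:Lattice-Diff-k-k}.

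The key observation needed is that $l\in \dualL/m$. This follows from the assumption $\rho|_{\T^m}=I_{d_\rho}$ together with normality of $\T^m$ in $\G$: for $t\in\T^m$ we have $g^{-1}tg\in\T^m$, so $(g\cdot\rho)(t)=\rho(g^{-1}tg)=I_{d_\rho}$. On the other hand $g\cdot\rho=\chi_l\rho$, and restricting to $\T^m$ gives $\chi_l(t)I_{d_\rho}=I_{d_\rho}$ for all $t\in\T^m$, i.e.\ $\chi_l|_{\T^m}=1$. By Lemma~\ref{Lemma:VNK} this yields $l\in\dualL/m$.

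With this in hand, set $k' := h\cdot k = Bk+l$, so that $\rot(\pi(g))k-k'=-l\in\dualL/m$ (using that $\dualL/m$ is a subgroup of $\R^{d_2}$). Lemma~\ref{Lemma:Lattice-Diff-k-k} with this $g$ then directly shows that $k\in K$ implies $k'\in K$, which is exactly the desired invariance. No step here should pose a substantive obstacle; the only mild subtlety is noticing that normality of $\T^m$ in $\G$ is what makes $(g\cdot\rho)|_{\T^m}$ well-defined and trivial, which is precisely the content of $m\in\MM$ guaranteed by Theorem~\ref{Theorem:Ring}.
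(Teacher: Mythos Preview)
Your proposal is correct and essentially identical to the paper's proof: both write an element of $\G_\rho$ as $\iso{\rot(\pi(g))}{l}$ for some $g\in\G$ with $g\cdot\rho=\chi_l\rho$, use $\rho|_{\T^m}=I_{d_\rho}$ together with $\T^m\triangleleft\G$ to conclude $\chi_l|_{\T^m}=1$ and hence $l\in\dualL/m$ via Lemma~\ref{Lemma:VNK}, and then invoke Lemma~\ref{Lemma:Lattice-Diff-k-k}. Your write-up is slightly more explicit about why $(g\cdot\rho)|_{\T^m}=I_{d_\rho}$ and about checking the hypothesis of Lemma~\ref{Lemma:Lattice-Diff-k-k}, but the argument is the same.
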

\begin{proof}
Let $k \in K$ and $g \in \G_\rho$, where $\rho\in\dual{\T\F}$ with $\rho|_{\T^m}=I_{d_\rho}$. 
There exist some $h\in\G$ and $l\in\R^{d_2}$ such that $g=\iso{\rot(\pi(h))}l$ and $h\cdot\rho=\chi_l\rho$.
Since $\rho|_{\T^m}=I_{d_\rho}=(h\cdot\rho)|_{\T^m}$, we have $\chi_l|_{\T^m}=1$ and so $l \in \dualL/m$ due to Lemma~\ref{Lemma:VNK}. Now Lemma~\ref{Lemma:Lattice-Diff-k-k} implies $g \cdot k = \rot(\pi(h)) k + l \in K$. 
\end{proof}
We now prove Theorem~\ref{Theorem:Bijektion}. 
\begin{proof}
As in the proof of Theorem~\ref{Theorem:RepSys} we let $m\in\N$ such that $\MM=m\N$ and $R$ be a representation set of $\set{\rho\in\dual{\T\F}}{\rho|_{\T^m}=I_{d_\rho}}/{\simrg}$ so that $R$ is a also representation set of $\dual{\T\F}/{\simrg}$ due to Lemma~\ref{Lemma:RepSys}. Choose $K$ and, for each $\rho\in R$, $N_\rho=\G_\rho\cdot K$ as in Remark~\ref{Remark:Null-Set}. We then claim that the following implication holds true for any $\rho\in\dual{\T\F}$ with $\rho|_{\T^m}=I_{d_\rho}$, $k\in\R^{d_2}$ and $g\in\G\setminus\T\F$: 
\[ g\cdot(\chi_k\rho)=\chi_k\rho \implies k\in K. \] 
To see this fix such $\rho$, $k$ and $g$ with $g\cdot(\chi_k\rho)=\chi_k\rho$. Since $\rho|_{\T^m}=I_{d_\rho}$ we also have $g\cdot\chi_k|_{\T^m}=\chi_k|_{\T^m}$. Thus Lemmas~\ref{Lemma:characters} and~\ref{Lemma:VNK} yield $(\rot(\pi(g))-I_{d_2})k\in\dualL/m$ and hence $k\in K$.

Since the point group $\rot(\SG)$ is finite and the lattice $\dualL$ is countable, the Lebesgue measure of $K\subset\R^{d_2}$ is $0$. As also $\G_\rho$ is countable for any $\rho\in R$, the sets $N_\rho\subset\R^{d_2}/\G_\rho$ are null-sets as well.

By Theorem~\ref{Theorem:RepSet} for each $\rho\in R$ and $\G_\rho\cdot k\in\R^{d_2}/\G_\rho$ there is a well-defined representation $\Ind_{\T\F}^\G(\chi_k\rho)$. Combining Mackey's irreducibility criterion stated in Theorem~\ref{Theorem:Mackey} and the above claim we see that $\Ind_{\T\F}^\G(\chi_k\rho)$ is irreducible for every $\rho\in R$ and $\G_\rho\cdot k\in(\R^{d_2}/\G_\rho)\setminus N_\rho$. We thus obtain that the mapping in the assertion of the theorem is well-defined and, upon choosing $N\subset\dual\G$ suitably, also surjective. By Theorem~\ref{Theorem:RepSet} it is injective as well since, as noted above, $R$ is a also representation set of $\dual{\T\F}/{\simrg}$.

We proceed to show that $N$ is a null-set with respect to the Plancherel measure on $\dual\G$. First note that, as $\T^m \cong \Z^{d_2}$, the dual space of $\T^m$ is $\dual{\T^m} = \set{\chi_k|_{\T^m}}{k\in \R^{d_2}}$, which by Lemma~\ref{Lemma:VNK} can be identified with the flat torus $\R^{d_2}/(\dualL/m)$ whose Haar measure is the pushforward under the action of $\dualL/m$ of the $d_2$-dimensional Lebesgue-measure restricted to a unit cell of $\dualL/m$.

Let $\Omega$ be the $\mu_{\T^m}$-conull set $\set{\chi_k|_{\T^m}}{k\in \R^{d_2} \setminus K} \subset \dual{\T^m}$. Note that $\Omega$ is $\G$-invariant: For $g \in \G$ and $k \in \R^{d_2}$ Lemma~\ref{Lemma:characters} gives $g \cdot \chi_k|_{\T^m} = \chi_{k'}|_{\T^m}$ with $k' = \rot(\pi(g)) k$ and then Lemma~\ref{Lemma:Lattice-Diff-k-k} shows that $k \in K$ if and only if $k' \in K$. Trivially, each $\chi_k|_{\T^m}$ extends to $\chi_k$ on $\T\F$ and $g \cdot \chi_k|_{\T^m} = \chi_k|_{\T^m}$ for all $g \in \T\F$. Conversely, if $g \in \G \setminus \T\F$ and $k \in \R^{d_2} \setminus K$, then the above claim yields  $g \cdot \chi_k|_{\T^m} \neq \chi_k|_{\T^m}$. As a consequence, $\G^{\chi_k|_{\T^m}} = \T\F$ and $\G_m^{\chi_k|_{\T^m}} = (\T\F)_m$ whenever $k \in \R^{d_2} \setminus K$. Invoking Theorem~\ref{Theorem:Mackey-machine} we thus find that 
\[\bigcup_{\G \cdot k \notin K/\G}\set[\Big]{\Ind_{\T\F}^\G(\chi_k \rho)}{\rho\in\dual{\T\F} : \rho|_{\T^m}=I_{d_\rho}}, \]
is a $\mu_\G$-conull subset of $\dual\G$, where we also have used Lemma~\ref{Lemma:RepSys}\ref{item:bbb} in order to pass from representations in $\dual{(\T\F)_m}$ to representations in $\dual{\T\F}$ that annihilate $\T^m$. To finish the proof it now suffices to show that 
\[\bigcup_{\G\cdot k\notin K/\G} \set[\Big]{\Ind_{\T\F}^\G(\chi_k\rho)}{\rho\in\dual{\T\F} : \rho|_{\T^m}=I_{d_\rho}}\subset\bigsqcup_{\rho\in R}\set[\big]{\Ind_{\T\F}^\G(\chi_k\rho)}{\G_\rho\cdot k\in(\R^{d_2}/\G_\rho) \setminus N_\rho}.\]
Let $k\in \R^{d_2} \setminus K$ and $\rho\in\dual{\T\F} : \rho|_{\T^m}=I_{d_\rho}$.
Let $\rho'\in R$ such that $\rho'\simrg\chi_k\rho$.
There exists some $g\in\G$ and $k'\in\R^{d_2}$ such that $g\cdot (\chi_k \rho)=\chi_{k'}\rho'$.
We have $\rho|_{\T^m}=I_{d_\rho}=\rho'|_{\T^m}$ and thus we have $\chi_{\rot(\pi(g))k}|_{\T^m}=\chi_{k'}|_{\T^m}$. So with Lemma~\ref{Lemma:VNK} we get $\rot(\pi(g))k-k'\in\dualL/m$ and thus $k'\in \R^{d_2} \setminus K$ due to Lemma~\ref{Lemma:Lattice-Diff-k-k}.  By Lemma~\ref{Lemma:K-Invariance} we then  have $\G_\rho\cdot k' \notin N_\rho$. The claim now follows from 
\[\Ind(\chi_k\rho)=\Ind(g\cdot(\chi_k\rho))=\Ind(\chi_{k'}\rho').\qedhere\]
\end{proof}
%

%-------------------------------------------------------------------
%-------------------------------------------------------------------
\section{Harmonic analysis of periodic functions}\label{subsection:fourier}
In this section we develop methods from Fourier analysis for periodic mappings on $\G$. We begin by defining a suitable notion of periodicity.
\begin{Definition}\label{Definition:periodic}
Let $S$ be a set and $N\in \MM$.
A function $u\colon\G\to S$ is called \emph{$\T^N$-periodic} if
\[u(g)=u(gt)\qquad\text{for all }g\in\G\text{ and }t\in\T^N.\]
A function $u\colon\G\to S$ is called \emph{periodic} if there exists some $N\in \MM$ such that $u$ is $\T^N$-periodic.

We equip $\C^{m\times n}$ with the Frobenius inner product $\scalar\fdot\fdot$ defined by
\[\scalar AB:=\sum_{i=1}^m\sum_{j=1}^n a_{ij}\conj{b_{ij}}\qquad\text{for all }A,B\in\C^{m\times n}\]
and let $\norm\fdot$ denote the induced norm.
We define the set
\[\Per(\G,\C^{m\times n}):=\set{u\colon\G\to\C^{m\times n}}{u\text{ is periodic}}.\]
\end{Definition}
\begin{Remark}
If $\G$ is finite and $S$ a set, then every function from $\G$ to $S$ is periodic and in particular, we have $\Per(\G,\C^{m\times n})=\{u\colon\G\to\C^{m\times n}\}$.
\end{Remark}
The following Lemma shows that the above definition of periodicity is independent of the choice of $\T$.
\begin{Lemma}
Let $S$ be a set.
A function $u\colon\G\to S$ is periodic if and only if there exists some $N\in\N$ such that
\[u(g)=u(gh)\qquad\text{for all }g\in\G\text{ and }h\in\G^N.\]
\end{Lemma}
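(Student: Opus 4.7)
The plan is to verify both directions of the equivalence using that $\T^m$ is of finite index in $\G$ for every $m \in \MM$, together with the description $\MM = m_0\N$ from Theorem~\ref{Theorem:Ring}. The argument is elementary and proceeds in two short steps.

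For the forward implication, assume $u$ is $\T^{N_0}$-periodic for some $N_0 \in \MM$. By Remark~\ref{Remark:tk}\ref{item:tk} the normal subgroup $\T^{N_0}$ has finite index $k := N_0^{d_2}\abs\F\,\abs{\G/(\T\F)}$ in $\G$, so the quotient $\G/\T^{N_0}$ is a finite group of order $k$. Hence $g^k \in \T^{N_0}$ for every $g \in \G$, which means $\G^k \subset \T^{N_0}$. The $\T^{N_0}$-periodicity of $u$ then yields $u(g) = u(gh)$ for all $g \in \G$ and $h \in \G^k$, so $N := k$ does the job.

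For the converse, suppose an $N \in \N$ exists with $u(g) = u(gh)$ for all $g \in \G$ and $h \in \G^N$. Let $m_0 \in \N$ be such that $\MM = m_0\N$ (Theorem~\ref{Theorem:Ring}) and set $N' := Nm_0$, so that $N' \in \MM$. For any $t \in \T$ we have $t^{m_0} \in \G$ and therefore $t^{N'} = (t^{m_0})^N \in \G^N$. Consequently $u(g) = u(g t^{N'})$ for all $g \in \G$ and all $t \in \T$, i.e.\ $u$ is $\T^{N'}$-periodic, which is precisely the definition of periodicity.

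The only delicate point is that the $N$ produced by the hypothesis need not itself lie in $\MM$, but multiplying by $m_0$ fixes this without changing any invariance property; no genuine obstacle arises.
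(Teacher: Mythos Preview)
Your proof is correct. The converse direction matches the paper's argument verbatim: given $N\in\N$, one picks a multiple $N'=m_0N\in\MM$ and observes $\T^{N'}\subset\G^N$.

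In the forward direction your route differs from the paper's. The paper establishes the explicit chain
\[
\G^{\abs{\G/(\T\F)}\abs\F N}\subset(\T\F)^{\abs\F N}\subset(\T^N\F)^{\abs\F}=\T^{\abs\F N}\F^{\abs\F}=\T^{\abs\F N}\subset\T^N,
\]
which uses the commutativity of $\T\F/\F$ and Proposition~\ref{Proposition:center2} (centrality of $\T^N$ in $\T\F$). You instead invoke only that $\T^{N_0}\triangleleft\G$ has finite index $k$, so Lagrange gives $\G^k\subset\T^{N_0}$ immediately. Your argument is shorter and more conceptual; the paper's yields a smaller explicit exponent ($\abs{\G/(\T\F)}\abs\F N_0$ versus your $N_0^{d_2}\abs\F\,\abs{\G/(\T\F)}$), but that plays no role in the lemma.
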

\begin{proof}
Let $S$ be a set and $u\colon\G\to S$ be $\T^N$-periodic for some $N\in \MM$.
By Theorem~\ref{Theorem:Ring} the function $u$ is $\T^{\abs\F N}$-periodic.
By Proposition~\ref{Proposition:center2} it holds
\[\G^{\abs{\G/(\T\F)}\abs\F N}\subset(\T\F)^{\abs\F N}\subset(\T^N\F)^{\abs\F}=\T^{\abs\F N}\F^{\abs\F}=\T^{\abs\F N}\subset\T^N.\]
and thus, we have
\[u(g)=u(gh)\qquad\text{for all }g\in\G\text{ and }h\in\G^{\abs{\G/(\T\F)}\abs\F N}.\]
The other direction is trivial since by Theorem~\ref{Theorem:Ring} for all $N\in\N$ there exists some $n\in\N$ such that $nN\in \MM$.
\end{proof}
The following lemma characterizes the periodic functions on $\G$ with the aid of the quotient groups $\G/\T^N$.
\begin{Lemma}
If $N\in \MM$ and $u\colon\G\to S$ is $\T^N$-periodic, then the function
\begin{align*}
\G_N\to S, \qquad 
g\T^N\mapsto u(g)
\end{align*}
is well-defined.
Moreover, we have
\[\Per(\G,\C^{m\times n})=\set[\Big]{\G\to\C^{m\times n},g\mapsto u(g\T^N)}{N\in \MM,u\colon\G_N\to\C^{m\times n}},\]
and this space is a vector space. 
\end{Lemma}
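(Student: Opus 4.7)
The plan is to verify the three assertions in sequence, each of which reduces to unwinding the definition of $\T^N$-periodicity and invoking the structural results on $\MM$ already established in Section~\ref{section:sections}. The main substantive input is Theorem~\ref{Theorem:Ring} (that $\MM=m_0\N$) together with Lemma~\ref{Lemma:helptranslationgroup}; everything else is bookkeeping.

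First I would verify well-definedness of $g\T^N\mapsto u(g)$. If $g\T^N=g'\T^N$ with $N\in\MM$, then $g'=gt$ for some $t\in\T^N$, and $\T^N$-periodicity of $u$ yields $u(g')=u(gt)=u(g)$, so the value depends only on the coset.

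Next I would prove the set equality. The inclusion ``$\subset$'' is immediate from the previous step: if $u$ is $\T^N$-periodic for some $N\in\MM$, set $\tilde u\colon\G_N\to\C^{m\times n}$, $g\T^N\mapsto u(g)$; then $u(g)=\tilde u(g\T^N)$ exhibits $u$ as an element of the right-hand side. For ``$\supset$'', any $u$ of the form $u(g)=\tilde u(g\T^N)$ with $N\in\MM$ and $\tilde u\colon\G_N\to\C^{m\times n}$ satisfies, for all $g\in\G$ and $t\in\T^N$, the identity $u(gt)=\tilde u(gt\T^N)=\tilde u(g\T^N)=u(g)$, so $u$ is $\T^N$-periodic, hence periodic.

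For the vector-space claim, closure under scalar multiplication is trivial since any period of $u$ is also a period of $\lambda u$. For closure under addition, let $u_1,u_2\in\Per(\G,\C^{m\times n})$ with periods $\T^{N_1}$ and $\T^{N_2}$, where $N_1,N_2\in\MM$. By Theorem~\ref{Theorem:Ring}, $\MM=m_0\N$, so $m_0\mid N_1$ forces $m_0\mid N_1N_2$, i.e.\ $N_1N_2\in\MM$. Since $\T^{N_i}$ is an abelian group by Lemma~\ref{Lemma:helptranslationgroup}, for any $t\in\T$ we have $t^{N_1N_2}=(t^{N_1})^{N_2}\in\T^{N_1}$ and $t^{N_1N_2}=(t^{N_2})^{N_1}\in\T^{N_2}$, giving $\T^{N_1N_2}\subset\T^{N_1}\cap\T^{N_2}$. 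Consequently both $u_1$ and $u_2$ are $\T^{N_1N_2}$-periodic, and therefore so is $u_1+u_2$. The zero function is $\T^N$-periodic for any $N\in\MM$, so we obtain a linear subspace of $\{u\colon\G\to\C^{m\times n}\}$.

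There is no genuine obstacle: the content of the lemma is precisely that $\MM$ being closed under multiplication (indeed, a principal ideal $m_0\N$ of $\N$) provides a common refinement of any two given periods, which is exactly what is needed to make $\Per(\G,\C^{m\times n})$ into a vector space.
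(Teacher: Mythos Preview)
Your proof is correct and follows the same approach as the paper, which dispatches the lemma in a single sentence (``This follows immediately from the definition of $\Per(\G,\C^{m\times n})$'') together with the parenthetical observation that $u_1+u_2$ is $\T^{N_1N_2}$-periodic. Your write-up simply unpacks this; one minor remark is that for $\T^{N_1N_2}\subset\T^{N_i}$ you only need that $\T^{N_i}$ is a group (which is the very definition of $N_i\in\MM$), not that it is abelian, so the appeal to Lemma~\ref{Lemma:helptranslationgroup} is unnecessary.
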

\begin{proof}
This follows immediately from the definition of $\Per(\G,\C^{m\times n})$. (Note that, if $u_i\in\Per(\G,\C^{m\times n})$ is $\T^{N_i}$-periodic, $N_i\in \MM$, $i=1,2$, then $u_1+u_2$ is $\T^{N_1N_2}$-periodic.) 
\end{proof}
\begin{Definition}\label{Definition:CCN}
For all $N\in \MM$ let $\CC_N$ be a representation set of $\G/\T^N$.
\end{Definition}
\begin{Remark}
\begin{enumerate}
\item If $\G$ is finite, we have $\CC_N=\G$ for all $N\in \MM$.
\item\label{item:Cube} Let $\G$ be infinite.
There exists some $m\in\N$ such that $\MM=m\N$ and there exist $t_1,\dots,t_{d_2}\in\T^m$ such that  $\{t_1,\dots,t_{d_2}\}$ generates $\T^m$.
Let $\CC$ be a representation set of $\G/\T^m$.
Then for all $N\in \MM$ a feasible choice for $\CC_N$ is
\[\CC_N=\set[\Big]{t_1^{n_1}\dots t_{d_2}^{n_{d_2}}g}{n_1,\dots,n_{d_2}\in\{0,\dots,N/m-1\},g\in\CC}.\]
For this choice, for all $x\in\R^d$ and large $N\in \MM$ the set $\CC_N\cdot x$ is similar to a cube which explains the nomenclature.
\end{enumerate}
\label{Remark:Cube}\end{Remark}
We equip the vector space $\Per(\G,\C^{m\times n})$ with an inner product.
\begin{Definition}\label{Definition:InnerProduct}
We define the inner product $\angles{\fdot,\fdot}$ on $\Per(\G,\C^{m\times n})$ by
\[\angles{u,v}:=\frac1{\abs{\CC_N}}\sum_{g\in\CC_N}\angles{u(g),v(g)}\qquad\text{if $u$ and $v$ are $\T^N$-periodic}\]
for all $u,v\in\Per(\G,\C^{m\times n})$.
We denote the induced norm by $\norm\fdot_2$.
\end{Definition}
In order to define the Fourier transform we must choose a set of representatives for the periodic elements of $\dual\G$.
\begin{Definition}\label{Definition:EE}
Let $\EE$ be a representation set of $\set{\rho\in\dual\G}{\rho\text{ is periodic}}$.
\end{Definition}
\begin{Remark}
\begin{enumerate}
\item All representations of $\EE$ are unitary by Definition~\ref{Definition:Representation} which is necessary for the Plancherel formula in Proposition~\ref{Proposition:TFplancherelmatrix}.
\item For all $N\in \MM$ a representation of $\G$ is $\T^N$-periodic if and only if $\rho|_{\T^N}=I_{d_\rho}$.
\item Proposition~\ref{Proposition:homeomorphism} shows that
\[\set[\big]{\rho\in\dual\G}{\rho\text{ is periodic}}=\set[\big]{\rho\circ\pi_N}{N\in \MM,\rho\in\dual{\G_N}},\]
where $\pi_N$ is the natural surjective homomorphism from $\G$ to $\G_N$ for all $N\in \MM$.
\end{enumerate}
\end{Remark}
\begin{Definition}\label{Definition:FourierPeriodic}
For all $u\in\Per(\G,\C^{m\times n})$ and for all periodic representations $\rho$ of $\G$ we define
\[\fourier u(\rho):=\frac1{\abs{\CC_N}}\sum_{g\in\CC_N}u(g)\otimes\rho(g)\in\C^{(md_\rho)\times(nd_\rho)},\]
where $N\in\MM$ is such that $u$ and $\rho$ are $\T^N$-periodic and $\otimes$ denotes the Kronecker product.
\end{Definition}
\begin{Proposition}[The Plancherel formula]\label{Proposition:TFplancherelmatrix}
The Fourier transformation
\[\fourier\fdot\colon\Per(\G,\C^{m\times n})\to\bigoplus_{\rho\in\EE}\C^{(md_\rho)\times(nd_\rho)},\quad u\mapsto(\fourier u(\rho))_{\rho\in\EE}\]
is well-defined and bijective.
Moreover, we have the Plancherel formula
\[\angles{u,v}=\sum_{\rho\in\EE}d_\rho\angles{\fourier u(\rho),\fourier v(\rho)}\qquad\text{for all }u,v\in\Per(\G,\C^{m\times n}).\]
\end{Proposition}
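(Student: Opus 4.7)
The plan is to reduce the statement to the classical Plancherel formula for the finite quotient groups $\G_N=\G/\T^N$, $N\in\MM$, using that every periodic function and every periodic irreducible representation factors through some such $\G_N$. As a first step I would rewrite the Fourier coefficient and the inner product in terms of the finite quotient: if $u,v$ are $\T^N$-periodic and $\rho$ is $\T^N$-periodic, they descend to functions $\tilde u,\tilde v$ on $\G_N$ and a representation $\tilde\rho$ of $\G_N$, and

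\[\fourier u(\rho)=\frac{1}{\abs{\G_N}}\sum_{\bar g\in\G_N}\tilde u(\bar g)\otimes\tilde\rho(\bar g),\qquad\angles{u,v}=\frac{1}{\abs{\G_N}}\sum_{\bar g\in\G_N}\angles{\tilde u(\bar g),\tilde v(\bar g)},\]

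which makes independence of the representation set $\CC_N$ manifest. Independence of $N\in\MM$ follows because any two valid choices are dominated by their product (which lies in $\MM=m_0\N$), and passing from $N$ to $kN$ replaces each summand by $\abs{\T^N/\T^{kN}}=\abs{\G_{kN}}/\abs{\G_N}$ identical copies, exactly compensated by the change of normalization.

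At each fixed level $N$ the rest is classical. The subspace $\Per_N\subset\Per(\G,\C^{m\times n})$ of $\T^N$-periodic elements is canonically identified with the space of $\C^{m\times n}$-valued functions on $\G_N$, and by Proposition~\ref{Proposition:homeomorphism} the subset $\EE_N\subset\EE$ of $\T^N$-periodic representatives is a representation set of $\dual{\G_N}$. Under these identifications $u\mapsto(\fourier u(\rho))_{\rho\in\EE_N}$ is precisely the matrix-valued Fourier transform on the finite group $\G_N$ with its normalized counting inner product, and the Peter-Weyl/Plancherel theorem for finite groups delivers both the linear isomorphism $\Per_N\to\bigoplus_{\rho\in\EE_N}\C^{(md_\rho)\times(nd_\rho)}$ and the Plancherel formula $\angles{u,v}=\sum_{\rho\in\EE_N}d_\rho\angles{\fourier u(\rho),\fourier v(\rho)}$ for $u,v\in\Per_N$.

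The main obstacle is to splice these level-$N$ statements into the full assertion over $\Per(\G,\C^{m\times n})=\bigcup_{N\in\MM}\Per_N$ and $\EE=\bigcup_{N\in\MM}\EE_N$. For this I would show that $\fourier u(\rho)=0$ whenever $u\in\Per_N$ but $\rho\in\EE\setminus\EE_N$: choosing a common period $NN'$ and splitting the sum over $\G_{NN'}$ along the right $\T^N/\T^{NN'}$-cosets produces a factor $\sum_{t\in\T^N/\T^{NN'}}\rho(t)$, which equals $\abs{\T^N/\T^{NN'}}$ times the projection onto the $\T^N$-invariants in the representation space of $\rho$. Since $\T^N\triangleleft\G$, Clifford's theorem forces $\rho|_{\T^N}$ to be a single $\G$-orbit of irreducibles of $\T^N$; as the trivial representation is a fixed point of the conjugation action, its appearance would give $\rho|_{\T^N}=I_{d_\rho}$ and hence $\rho\in\EE_N$, a contradiction. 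Combining this vanishing with the finite-level Plancherel for arbitrarily large $N$, and noting that every finite subset of $\EE$ lies in some $\EE_N$, yields both the bijectivity onto the external direct sum and the full Plancherel formula.
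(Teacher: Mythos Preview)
Your proof is correct and follows the same overall strategy as the paper: reduce to the finite quotients $\G_N$, invoke the classical Plancherel theorem there, and then splice the level-$N$ statements together over $\Per(\G,\C^{m\times n})=\bigcup_{N\in\MM}\Per_N$ and $\EE=\bigcup_{N\in\MM}\EE_N$.

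The one point where you diverge is the proof of the key vanishing $\fourier u(\rho)=0$ for $u\in\Per_N$ and $\rho\in\EE\setminus\EE_N$. You argue directly by splitting the sum along $\T^N/\T^{NN'}$-cosets and invoking Clifford's theorem to see that the projection onto $\T^N$-invariants is zero. The paper instead obtains the vanishing as an immediate consequence of the Plancherel identity itself: since $\norm u_2^2$ can be computed either as $\sum_{\rho\in\EE_N}d_\rho\norm{\fourier u(\rho)}^2$ or as $\sum_{\rho\in\EE_{nN}}d_\rho\norm{\fourier u(\rho)}^2$, and the first sum is a subsum of the second, the extra terms must vanish. The paper's argument is shorter and avoids Clifford theory; yours gives a structural reason for the vanishing and makes the role of the normality $\T^N\triangleleft\G$ more transparent. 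Both are fine.
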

\begin{proof}
We show that the well-known Plancherel formula for finite groups, see, \eg, \cite[Exercise~5.7]{Steinberg2012}, implies the Plancherel formula of the proposition.
Let $N\in \MM$ and $\pi_N\colon\G\to\G_N$ be the natural surjective homomorphism.
The map
\begin{align*}
f_1\colon\{u\colon\G_N\to\C^{m\times n}\}\to\set{u\in\Per(\G,\C^{m\times n})}{u\text{ is $\T^N$-periodic}}, \qquad 
u\mapsto u\circ\pi_N
\end{align*}
is bijective.
Let $\EE_N=\set{\rho}{\rho\text{ is a representation of }\G_N, \rho\circ\pi_N\in\EE}$.
We have $\set{\rho\circ\pi_N}{\rho\in\EE_N}=\set{\rho\in\EE}{\rho\text{ is $\T^N$-periodic}}$.
Thus the map
\begin{align*}
f_2\colon&\bigoplus_{\rho\in\EE,\ \rho\text{ is $\T^N$-periodic}}\C^{(md_\rho)\times(nd_\rho)}\to\bigoplus_{\rho\in\EE_N}\C^{(md_\rho)\times(nd_\rho)},\\
&\parens{A_\rho}_{\rho\in\EE,\ \rho\text{ is $\T^N$-periodic}}\mapsto\parens{A_{\rho\circ\pi_N}}_{\rho\in\EE_N}
\end{align*}
is bijective.
By Proposition~\ref{Proposition:homeomorphism} the set $\EE_N$ is a representation set of $\dual{\G_N}$.
For all $u\colon\G_N\to\C^{m\times n}$ and $\rho\in\EE_N$ we define $\fourier u(\rho)=\frac1{\abs{\G_N}}\sum_{g\in\G_N}u(g)\otimes\rho(g)$.
By the Plancherel formula for finite groups, the Fourier transformation
\[\fourier\fdot\colon\{u\colon\G_N\to\C^{m\times n}\}\to\bigoplus_{\rho\in\EE_N}\C^{(md_\rho)\times(nd_\rho)}, \qquad u\mapsto (\fourier u(\rho))_{\rho\in\EE_N}\]
is bijective and it holds $\frac1{\abs{\G_N}}\sum_{g\in\G_N}\angles{u(g),v(g)}=\sum_{\rho\in\EE_N}d_\rho\angles{\fourier u(\rho),\fourier v(\rho)}$ for all $u,v\colon\G_N\to\C^{m\times n}$.
The diagram
\[ \begin{tikzcd}
\set{u\in\Per(\G,\C^{m\times n})}{u\text{ is $\T^N$-periodic}} \arrow{r}{\fourier{\,\fdot\,}}  & \bigoplus_{\rho\in\EE,\ \rho\text{ is $\T^N$-periodic}}\C^{(md_\rho)\times(nd_\rho)}  \arrow{d}{f_2} \\
\{u\colon\G_N\to\C^{m\times n}\} \arrow[swap]{u}{f_1} \arrow{r}{\fourier{\,\fdot\,}}& \bigoplus_{\rho\in\EE_N}\C^{(md_\rho)\times(nd_\rho)}
\end{tikzcd}
\]
commutes, where the upper map is defined by $u\mapsto(\fourier u(\rho))_{\rho\in\EE,\ \rho\text{ is $\T^N$-periodic}}$.
Thus, the map
\begin{equation}\label{eq:rsl}
\fourier\fdot\colon\set{u\in\Per(\G,\C^{m\times n})}{u\text{ is $\T^N$-periodic}}\to\bigoplus_{\rho\in\EE,\ \rho\text{ is $\T^N$-periodic}}\C^{(md_\rho)\times(nd_\rho)}
\end{equation}
is bijective and we have
\[\angles{u,v}=\sum_{\rho\in\EE,\ \rho\text{ is $\T^N$-periodic}}d_\rho\angles{\fourier u(\rho),\fourier v(\rho)}\]
for all $\T^N$-periodic functions $u,v\in\Per(\G,\C^{m\times n})$.

Since $N\in \MM$ was arbitrary, for all $u\in\Per(\G,\C^{m\times n})$, for all $N\in \MM$ such that $u$ is $\T^N$-periodic and $n\in\N$ it holds
\begin{equation}\label{eq:onex}
\sum_{\rho\in\EE,\ \rho\text{ is $\T^N$-periodic}}d_\rho\norm{\fourier u(\rho)}^2=\norm u_2^2=\sum_{\rho\in\EE,\ \rho\text{ is $\T^{nN}$-periodic}}d_\rho\norm{\fourier u(\rho)}^2.
\end{equation}
By \eqref{eq:onex} for all $u\in\Per(\G,\C^{m\times n})$ and $N\in \MM$ such that $u$ is $\T^N$-periodic, we have
\begin{equation}\label{eq:oney}
\set{\rho\in\EE}{\fourier u(\rho)\neq 0}\subset\set{\rho\in\EE}{\rho\text{ is $\T^N$-periodic}}.
\end{equation}

By \eqref{eq:onex} and \eqref{eq:oney} the Fourier transformation $\fourier\fdot\colon\Per(\G,\C^{m\times n})\to\bigoplus_{\rho\in\EE}\C^{(md_\rho)\times(nd_\rho)}$ is well-defined and we have
\[\angles{u,v}=\sum_{\rho\in\EE}d_\rho\angles{\fourier u(\rho),\fourier v(\rho)}\]
for all $u,v\in\Per(\G,\C^{m\times n})$.
Moreover, since the map defined in \eqref{eq:rsl} is injective and $\Per(\G,\C^{m\times n})=\bigcup_{N\in\MM}\set{u\in\Per(\G,\C^{m\times n})}{u\text{ is $\T^N$-periodic}}$, the Fourier transformation is injective.
Analogously, the Fourier transformation is surjective.
\end{proof}
\begin{Remark}
\begin{enumerate}
\item The above proof also shows that for all $u\colon\G\to\C^{m\times n}$ and $N\in \MM$ such that $u$ is $\T^N$-periodic, we have
\[\set{\rho\in\EE}{\fourier u(\rho)\neq 0}\subset\set{\rho\in\EE}{\rho\text{ is $\T^N$-periodic}}.\]
Moreover, for all $N\in \MM$ the map
\begin{align*}
&\set[\big]{u\colon\G\to\C^{m\times n}}{u\text{ is $\T^N$-periodic}}\to\bigoplus_{\rho\in\EE,\ \rho\text{ is $\T^N$-periodic}}\C^{(md_\rho)\times (nd_\rho)},\\
&u\mapsto \parens[\big]{\fourier u(\rho)}
\end{align*}
is bijective.
\item It is easy to see that by the above proposition we have also a description of the completion of $\Per(\G,\C^{m\times n})$ with respect to the norm $\norm\fdot_2$.
We have
\[\overline{\Per(\G,\C^{m\times n})}^{\norm\fdot_2}=\set[\bigg]{u\colon\G\to\C^{m\times n}}{\sum_{\rho\in\EE}d_\rho\norm{\fourier u(\rho)}^2<\infty}\]
and the map

\begin{align*}
&\overline{\Per(\G,\C^{m\times n})}^{\norm\fdot_2}\to\set[\bigg]{a\in\prod_{\rho\in\EE}\C^{(md_\rho)\times(nd_\rho)}}{\sum_{\rho\in\EE}d_\rho \norm{a(\rho)}^2<\infty},\\
&u\mapsto(\fourier u(\rho))_{\rho\in\EE}
\end{align*}
is bijective.
\end{enumerate}
\end{Remark}
\begin{Lemma}\label{Lemma:PeriodicTranslation}
Let $f\in\Per(\G,\C^{m\times n})$, $g\in\G$ and $\tau_gf$ denote the translated function $f(\fdot g)$.
Then we have $\tau_gf\in\Per(\G,\C^{m\times n})$ and
\[\fourier{\tau_g f}(\rho)=\fourier f(\rho)(I_n\otimes\rho(g^{-1}))\]
for all periodic representations $\rho$ of $\G$.
\end{Lemma}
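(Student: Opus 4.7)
The plan is to proceed in two steps: first establish that $\tau_g f$ lies in $\Per(\G,\C^{m\times n})$, then compute its Fourier transform by a change of summation variable and the Kronecker identity.

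For periodicity, pick $N\in\MM$ with $f$ being $\T^N$-periodic. Since $N\in\MM$ means $\T^N\triangleleft\G$, we have $g^{-1}\T^Ng\subset\T^N$. Hence for any $h\in\G$ and $t\in\T^N$, setting $s=g^{-1}tg\in\T^N$, we get
\[(\tau_gf)(ht)=f(htg)=f(hg\cdot g^{-1}tg)=f(hgs)=f(hg)=(\tau_gf)(h),\]
so $\tau_gf$ is $\T^N$-periodic as well. In particular, both $f$ and $\tau_gf$ (and, for any periodic representation $\rho$ of $\G$, also $\rho$) can be assumed $\T^N$-periodic for a common $N\in\MM$.

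For the Fourier transform, by definition
\[\fourier{\tau_g f}(\rho)=\frac{1}{\abs{\CC_N}}\sum_{h\in\CC_N}f(hg)\otimes\rho(h).\]
Since $\T^N\triangleleft\G$, the set $\CC_N g$ is again a representation set of $\G/\T^N$: the map $\bar h\mapsto\overline{hg}$ is a bijection of $\G/\T^N$ and $\CC_N$ meets each coset exactly once. Substituting $h'=hg$, so $h=h'g^{-1}$ and $\abs{\CC_Ng}=\abs{\CC_N}$, we obtain
\[\fourier{\tau_g f}(\rho)=\frac{1}{\abs{\CC_N}}\sum_{h'\in\CC_N g}f(h')\otimes\rho(h'g^{-1})=\frac{1}{\abs{\CC_N}}\sum_{h'\in\CC_N g}f(h')\otimes\bigl(\rho(h')\rho(g^{-1})\bigr).\]
Using the Kronecker identity $A\otimes(BC)=(A\otimes B)(I\otimes C)$, which is valid for matrices of compatible sizes, and the fact that $I_n\otimes\rho(g^{-1})$ does not depend on $h'$ and can therefore be pulled out of the sum, this becomes
\[\fourier{\tau_g f}(\rho)=\Bigl(\frac{1}{\abs{\CC_N}}\sum_{h'\in\CC_N g}f(h')\otimes\rho(h')\Bigr)\bigl(I_n\otimes\rho(g^{-1})\bigr).\]

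To conclude, it remains to observe that the sum inside the brackets does not depend on the choice of representation set of $\G/\T^N$: if $h'=ht$ with $t\in\T^N$, then $f(h')=f(h)$ by $\T^N$-periodicity of $f$ and $\rho(h')=\rho(h)\rho(t)=\rho(h)$ since $\rho|_{\T^N}=I_{d_\rho}$. Thus the sum over $\CC_Ng$ equals the sum over $\CC_N$, and the bracketed expression is exactly $\fourier f(\rho)$. This yields the claimed identity. There is no substantial obstacle here; the only point requiring care is the bookkeeping of representation sets, which is handled by the normality of $\T^N$ in $\G$.
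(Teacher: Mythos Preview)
Your proof is correct and follows essentially the same approach as the paper: establish $\T^N$-periodicity of $\tau_gf$ via normality of $\T^N$, then compute the Fourier transform by the substitution $h\mapsto hg$ and factor out $I_n\otimes\rho(g^{-1})$ using the mixed-product property of the Kronecker product. Your write-up is in fact a bit more explicit than the paper's in justifying the periodicity step and the independence of the sum from the choice of representation set.
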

\begin{proof}
Let $f\in\Per(\G,\C^{m\times n})$, $g\in\G$ and $\rho$ be a periodic representation.
Let $N\in \MM$ such that $f$ and $\rho$ are $\T^N$-periodic.
The function $\tau_gf$ is $\T^N$-periodic and we have 
\begin{align*}
\fourier{\tau_gf}(\rho)&=\frac1{\abs{\CC_N}}\sum_{h\in\CC_N}\tau_gf(h)\otimes \rho(h)\\
&=\frac1{\abs{\CC_N}}\sum_{h\in\CC_N}f(hg)\otimes\rho(h)\\
&=\frac1{\abs{\CC_N}}\sum_{h\in\CC_N}f(h)\otimes \rho(hg^{-1})\\
&=\frac1{\abs{\CC_N}}\sum_{h\in\CC_N}f(h)\otimes(\rho(h)\rho(g^{-1}))\\
&=\parens[\Big]{\frac1{\abs{\CC_N}}\sum_{h\in\CC_N}f(h)\otimes \rho(h)}(I_n\otimes\rho(g^{-1}))\\
&=\fourier f(\rho)(I_n\otimes\rho(g^{-1})),
\end{align*}
where in the third step we made a substitution and used that $\CC_N$ and $\CC_Ng$ are representation sets of $\G/\T^N$ and that the function $h\mapsto f(h)\otimes\rho(hg^{-1})$ is $\T^N$-periodic.
\end{proof}
\begin{Definition}\label{Definition:FourierLOne}
For all $u\in L^1(\G,\C^{m\times n})$ and all representations $\rho$ of $\G$ we define
\[\fourier u(\rho):=\sum_{g\in\G}u(g)\otimes \rho(g).\]
\end{Definition}
\begin{Remark}
If the group $\G$ is finite, $\rho$ is a representation of $\G$ and $u\in L^1(\G,\C^{m\times n})=\Per(\G,\C^{m\times n})$, then the Definitions~\ref{Definition:FourierPeriodic} and \ref{Definition:FourierLOne} for $\fourier u(\rho)$ differ by the multiplicative constant $\abs\G$, but it will always be clear from the context which of the both definitions is meant.
If $\G$ is infinite, then $L^1(\G,\C^{m\times n})\cap\Per(\G,\C^{m\times n})=\{0\}$ and thus, there is no ambiguity.
\end{Remark}
\begin{Definition}\label{Definition:Convolution}
For all $u\in L^1(\G,\C^{l\times m})$ and $v\in\Per(\G,\C^{m\times n})$ we define the convolution $u*v\in\Per(\G,\C^{l\times n})$ by
\[u*v(g):=\sum_{h\in\G}u(h)v(h^{-1}g)\qquad\text{for all }g\in\G.\]
\end{Definition}
\begin{Lemma}\label{Lemma:Convolution}
Let $u\in L^1(\G,\C^{l\times m})$, $v\in\Per(\G,\C^{m\times n})$ and $\rho$ be a periodic representation of $\G$.
Then
\begin{enumerate}
\item the convolution $u*v$ is $\T^N$-periodic if $v$ is $\T^N$-periodic and
\item we have
\[\fourier{u*v}(\rho)=\fourier u(\rho)\fourier v(\rho).\]
\end{enumerate}
\end{Lemma}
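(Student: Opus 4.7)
The plan is to verify each assertion by a direct computation, mirroring the classical convolution theorem and relying on the fact that $\T^N$ is a normal subgroup of $\G$ (so that a left-translate of $\CC_N$ is again a representation set of $\G/\T^N$) together with the compatibility $\rho(g) = \rho(h)\rho(h^{-1}g)$ in Kronecker-product form.

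For \textbf{(i)}, fix $N\in\MM$ such that $v$ is $\T^N$-periodic and let $t\in\T^N$. For any $g\in\G$, I use absolute convergence (guaranteed by $u\in L^1$ and boundedness of $v$, since $v$ takes only finitely many values) to compute
\[ (u*v)(gt)
   =\sum_{h\in\G}u(h)\,v(h^{-1}gt)
   =\sum_{h\in\G}u(h)\,v(h^{-1}g)
   =(u*v)(g), \]
where the middle equality uses $\T^N$-periodicity of $v$ applied to the argument $(h^{-1}g)\,t$.

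For \textbf{(ii)}, choose $N\in\MM$ so that $v$ and $\rho$ are both $\T^N$-periodic; by (i), $u*v$ is then $\T^N$-periodic as well, so Definition~\ref{Definition:FourierPeriodic} applies. I expand
\[ \fourier{u*v}(\rho)
   = \frac{1}{\abs{\CC_N}}\sum_{g\in\CC_N}\sum_{h\in\G}\bigl(u(h)v(h^{-1}g)\bigr)\otimes\rho(g), \]
interchange the two sums (justified again by absolute convergence since $u\in L^1$ and $v,\rho$ are bounded), and write $\rho(g)=\rho(h)\rho(h^{-1}g)$. The mixed-product property of the Kronecker product then gives
\[ u(h)v(h^{-1}g)\otimes\rho(h)\rho(h^{-1}g)
   = \bigl(u(h)\otimes\rho(h)\bigr)\bigl(v(h^{-1}g)\otimes\rho(h^{-1}g)\bigr). \]
I now substitute $g'=h^{-1}g$; since $\T^N\triangleleft\G$, the set $h^{-1}\CC_N$ is again a representation set of $\G/\T^N$, and since $g'\mapsto v(g')\otimes\rho(g')$ is $\T^N$-periodic, the inner sum over $h^{-1}\CC_N$ equals the sum over $\CC_N$. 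This yields
\[ \fourier{u*v}(\rho)
   = \sum_{h\in\G}\bigl(u(h)\otimes\rho(h)\bigr)\cdot\frac{1}{\abs{\CC_N}}\sum_{g'\in\CC_N}\bigl(v(g')\otimes\rho(g')\bigr)
   = \fourier u(\rho)\,\fourier v(\rho), \]
using Definitions~\ref{Definition:FourierLOne} and~\ref{Definition:FourierPeriodic}.

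The only subtle point is the substitution step: one must remember that $u$ is summed over all of $\G$ while $v$ is summed over the representation set $\CC_N$, so the freedom to replace $h^{-1}\CC_N$ by $\CC_N$ is exactly what makes the Kronecker factorization collapse into the product $\fourier u(\rho)\fourier v(\rho)$. The normality of $\T^N$ from Theorem~\ref{Theorem:Ring} is what guarantees this replacement is legitimate.
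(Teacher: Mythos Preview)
Your proof is correct and follows essentially the same route as the paper's: verify periodicity directly from the definition, then expand $\fourier{u*v}(\rho)$, split $\rho(g)=\rho(h)\rho(h^{-1}g)$, use the mixed-product property of the Kronecker product, and substitute $g'=h^{-1}g$ using that $h^{-1}\CC_N$ is again a set of coset representatives. The only (harmless) over-attribution is that normality of $\T^N$ is not actually needed for $h^{-1}\CC_N$ to be a representation set of the \emph{left} cosets---left translation always preserves this---though of course $N\in\MM$ is what makes $\G/\T^N$ a group in the first place.
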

\begin{proof}
Let $u\in L^1(\G,\C^{l\times m})$, $v\in\Per(\G,\C^{m\times n})$ and $\rho$ be a periodic representation of $\G$.
Let $N\in \MM$ such that $v$ and $\rho$ are $\T^N$-periodic.
By Definition~\ref{Definition:Convolution} it is clear that $u*v$ is $\T^N$-periodic and thus we have $u*v\in\Per(\G,\C^{m\times n})$ as claimed in Definition~\ref{Definition:Convolution}.
We have
\begin{align*}
\fourier{u*v}(\rho)&=\frac1{\abs{\CC_N}}\sum_{g\in\CC_N}u*v(g)\otimes\rho(g)\\
&=\frac1{\abs{\CC_N}}\sum_{g\in\CC_N}\sum_{h\in\G}\parens[\big]{u(h)v(h^{-1}g)}\otimes\rho(g)\\
&=\frac1{\abs{\CC_N}}\sum_{g\in\CC_N}\sum_{h\in\G}\parens[\big]{u(h)\otimes\rho(h)}\parens[\big]{v(h^{-1}g)\otimes\rho(h^{-1}g)}\\
&=\parens[\bigg]{\sum_{h\in\G}u(h)\otimes\rho(h)}\parens[\bigg]{\frac1{\abs{\CC_N}}\sum_{g\in\CC_N}v(g)\otimes\rho(g)}\\
&=\fourier u(\rho)\fourier v(\rho).\qedhere
\end{align*}
\end{proof}
%

%-------------------------------------------------------------------
%-------------------------------------------------------------------
\section{Quotient groups as semidirect products}\label{Subsection:SemidirectProduct}
By Definition~\ref{Definition:SubsetN} for all $m\in \MM$ the group $\T^m$ is a normal subgroup of $\G$, but in general there does not exist any group $\H<\G$ such that $\G=\T^m\rtimes\H$, see Example~\ref{Example:symmorphicnonspaceNEW}.
In this section we determine for $m\in \MM$ and appropriate $N\in m\N$ a group $\H\le\G/\T^N$ such that
\[\G/\T^N=\T^m/\T^N\rtimes \H,\]
see Theorem~\ref{Theorem:semidirectproductG_NNEW}.
The proof is similar to the proof of the Schur-Zassenhaus theorem, see, \eg, \cite{Alperin1995}.
If $\G$ is a space group, for appropriate $N\in\N$ the existence of a group $\H$ such that $\G/\T^N=\T/T^N\rtimes\H$ is mentioned in \cite[p.\,299]{Bacry1988} and in \cite[p.\,376]{Florek1993}.
%in the article 'Finite Space Groups Revisited' from R. Dirl and B.L. Davies on page 376.
\begin{Example}[Symmorphic and nonsymmorphic space groups]\label{Example:symmorphicnonspaceNEW}
Let $\G$ be a space group and $\T$  its subgroup of translations.
If there exists a group $\H<\G$ such that $\G=\T\rtimes\H$, then $\G$ is said to be a \emph{symmorphic} space group, see \eg, \cite[Section 9.1]{Opechowski1986}.
Otherwise, $\G$ is a \emph{nonsymmorphic} space group.

Let $d=2$, $t_1=(I_2,e_1)$, $t_2=(I_2,e_2)$, $\id=\iso{I_2}0$, $p_1=\iso[\big]{\parens[\big]{\begin{smallmatrix}1& 0 \\ 0& -1\end{smallmatrix}}}0$ and $p_2=\iso[\big]{\parens[\big]{\begin{smallmatrix}1& 0 \\ 0& -1\end{smallmatrix}}}{(\begin{smallmatrix}0.5 \\ 0 \end{smallmatrix})}$.
The space group
\[\set[\big]{tp}{t\in\angles{t_1,t_2},p\in\braces{\id,p_1}}<\E(2)\]
is symmorphic and equal to $\T\rtimes\H$ with $\T=\angles{t_1,t_2}$ and $\H=\angles{p_1}$.
The space group
\[\set[\big]{tp}{t\in\angles{t_1,t_2},p\in\braces{\id,p_2}}<\E(2)\]
is nonsymmorphic, since it does not contain any element of order 2, but the order of the quotient group of the space group by its subgroup of all translations is 2.
\end{Example}
\begin{Definition}
Let $\tildetau\colon\rot(\SG)\to\trans(\SG)$ be a map such that $\iso P{\tildetau(P)}\in\SG$ for all $P\in\rot(\SG)$.
We define the map
\begin{align*}
\bartau\colon\rot(\SG)\times\rot(\SG)\to\trans(\T_\SG), \qquad
(P,Q)\mapsto \tildetau(P)+P\tildetau(Q)-\tildetau(PQ).
\end{align*}
Furthermore, for all $n\in\N$ coprime to $\abs{\rot(\SG)}$ we define the set
\[\P_\SG^{(n)}:=\set[\bigg]{\iso[\bigg]{P}{\tildetau(P)-a(n)\sum_{Q\in\rot(\SG)} \bartau(P,Q)}}{P\in\rot(\SG)}\subset\SG,\]
where $a(n)=\max\set[\big]{\tilde a\in\braces{0,-1,\dots}}{\exists\,b\in\Z\text{ such that }\tilde a\abs{\rot(\SG)}+bn=1}$.

For all $n\in\N$ coprime to $\abs{\rot(\SG)}$ let $\P^{(n)}\subset\G$ be such that the map
\begin{align*}
\P^{(n)}\to\P_\SG^{(n)}, \qquad
g\mapsto\pi(g)
\end{align*}
is bijective, where $\pi\colon\G\to\SG$ is the natural surjective homomorphism.
\end{Definition}
\begin{Remark}
For all $P,Q\in\rot(\SG)$ it holds
\[\iso{P}{\tildetau(P)}\iso{Q}{\tildetau(Q)}=\iso{I_{d_2}}{\bartau(P,Q)}\iso{PQ}{\tildetau(PQ)}\]
and thus, the map $\bartau$ is well-defined.
\end{Remark}
If $n=1$, then $a(n)=0$ and $\P_\SG^{(n)}=\set{\iso P{\tildetau(P)}}{P\in\rot(\SG)}$.
\begin{Lemma}\label{Lemma:coprime}
For all $n\in\N$ coprime to $\abs{\rot(\SG)}$ and for all $N\in(n\N)\cap \MM$ it holds
\[\T^n\F\P^{(n)}\le\G\quad\text{and}\quad\T^N\triangleleft\T^n\F\P^{(n)}.\]
\end{Lemma}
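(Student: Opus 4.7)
The plan is to transfer the question from $\G$ to its homomorphic image $\SG=\pi(\G)$, where $\pi\colon\G\to\SG$ denotes the natural surjective homomorphism with kernel $\F$.

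First I would verify the identity
\[ \T^n\F\P^{(n)} = \pi^{-1}(\T_\SG^n\P_\SG^{(n)}). \]
The inclusion ``$\subset$'' is immediate. For ``$\supset$'', take $g\in\G$ with $\pi(g)=ts$, $t\in\T_\SG^n$, $s\in\P_\SG^{(n)}$; lift $s$ to $p\in\P^{(n)}$ and, writing $t=t_0^n$ using that $\T_\SG\cong\Z^{d_2}$, lift $t_0$ to $\T$ via the canonical section and raise to the $n$-th power to obtain $t'\in\T^n$ with $\pi(t')=t$. Then $gp^{-1}t'^{-1}\in\F$, so $g=ft'p$ for some $f\in\F$, and normality $\F\triangleleft\G$ lets us rewrite $ft'=t'f''$ with $f''\in\F$. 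With this identity in hand, showing $\T^n\F\P^{(n)}\le\G$ reduces to showing $\T_\SG^n\P_\SG^{(n)}\le\SG$.

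For the latter I would perform the Schur--Zassenhaus style cocycle calculation. Writing $p_P:=\iso{P}{\tildetau(P)-a(n)\sum_R\bartau(P,R)}$ for $P\in\rot(\SG)$, I would compute $p_P\cdot p_Q\cdot p_{PQ}^{-1}$ using the 2-cocycle identity
\[ \bartau(P,Q)+\bartau(PQ,R)=P\bartau(Q,R)+\bartau(P,QR) \]
(a direct consequence of associativity in $\SG$, comparing $(p_P p_Q)p_R$ with $p_P(p_Q p_R)$) together with the fact that $R\mapsto QR$ permutes $\rot(\SG)$. After cancellation this leaves
\[ p_P\cdot p_Q\cdot p_{PQ}^{-1} = \iso{I_{d_2}}{(1-a(n)\abs{\rot(\SG)})\bartau(P,Q)}, \]
and by the defining property of $a(n)$ the coefficient $1-a(n)\abs{\rot(\SG)}$ lies in $n\Z$, so the right-hand side lies in $\T_\SG^n$. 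Hence $\P_\SG^{(n)}\cdot\P_\SG^{(n)}\subset\T_\SG^n\P_\SG^{(n)}$; since $\T_\SG^n\triangleleft\SG$, the product $\T_\SG^n\P_\SG^{(n)}$ is closed under multiplication. Closure under inverses follows because specialising $P=I_{d_2}$ in the definition of $p_P$ places $p_{I_{d_2}}$ into $\T_\SG^n$ (so $\id\in\T_\SG^n\P_\SG^{(n)}$), and the product calculation applied to $P$ and $P^{-1}$ then yields $p_P^{-1}\in\T_\SG^n\P_\SG^{(n)}$.

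The normality $\T^N\triangleleft\T^n\F\P^{(n)}$ follows at once from $\T^N\triangleleft\G$ (guaranteed by $N\in\MM$ via Theorem~\ref{Theorem:Ring}) together with the inclusion $\T^N\subset\T^n\F\P^{(n)}$: for $t\in\T$ and $N\in n\N$, one has $\pi(t^N)=\pi(t)^N\in\T_\SG^n\subset\T_\SG^n\P_\SG^{(n)}$, hence $t^N\in\pi^{-1}(\T_\SG^n\P_\SG^{(n)})=\T^n\F\P^{(n)}$. The main obstacle is thus the associativity bookkeeping that extracts the factor $(1-a(n)\abs{\rot(\SG)})\bartau(P,Q)$; the integer $a(n)$ has been engineered precisely so that the coboundary-like correction $-a(n)\sum_R\bartau(\cdot,R)$ absorbs the cocycle obstruction $\bartau$ into the subgroup $\T_\SG^n$.
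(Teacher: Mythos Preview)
Your proof is correct and follows the same overall route as the paper: reduce to $\SG$ via the identity $\T^n\F\P^{(n)}=\pi^{-1}(\T_\SG^n\P_\SG^{(n)})$, show $\T_\SG^n\P_\SG^{(n)}\le\SG$ by a cocycle computation, and deduce $\T^N\triangleleft\T^n\F\P^{(n)}$ from $N\in\MM$ and $n\mid N$. The difference lies in the core computation: the paper verifies the subgroup property by a direct (and fairly lengthy) expansion of $pq^{-1}$ for $p,q\in\P_\SG^{(n)}$, whereas you compute $p_P\,p_Q\,p_{PQ}^{-1}$ and invoke the $2$-cocycle identity $\bartau(P,Q)+\bartau(PQ,R)=P\bartau(Q,R)+\bartau(P,QR)$ together with the substitution $R\mapsto QR$ to obtain the clean factor $(1-a(n)\abs{\rot(\SG)})\bartau(P,Q)\in n\,\trans(\T_\SG)$. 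Your route is the standard Schur--Zassenhaus cohomological argument made explicit and is shorter and more transparent; the paper's version arrives at the same endpoint by brute-force manipulation of the sums $\sum_S\bartau(\cdot,S)$ without naming the cocycle identity.
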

\begin{proof}
Let $n\in\N$ be coprime to $\abs{\rot(\SG)}$.

First, we prove that $\T_\SG^n\P_\SG^{(n)}$ is a subgroup of $\SG$.
Let $t,s\in\T_\SG^n$ and $p,q\in\P_\SG^{(n)}$.
We have to show that $tp(sq)^{-1}\in\T_\SG^n\P_\SG^{(n)}$.
Clearly, it holds $tp(sq)^{-1}=tpq^{-1}s^{-1}(pq^{-1})^{-1}pq^{-1}$.
Since $\T_\SG^n\triangleleft\SG$, we have $(pq^{-1})s^{-1}(pq^{-1})^{-1}\in\T_\SG^n$, and hence, it suffices to show that $pq^{-1}\in\T_\SG^n\P_\SG^{(n)}$.
Let $P=\rot(p)$, $Q=\rot(q)$ and $R=PQ^{-1}\in\rot(\SG)$.
Let $a=\max\set[\big]{\tilde a\in\braces{0,-1,\dots}}{\exists\,b\in\Z\text{ such that }a\abs{\rot(\SG)}+bn=1}$ and $b\in\Z$ such that $a\abs{\rot(\SG)}+bn=1$.
We compute
\begin{align*}
pq^{-1}&=\iso[\bigg]{P}{\tildetau(P)-a\sum_{S\in\rot(\SG)}\bartau(P,S)}\iso[\bigg]{Q^{-1}}{-Q^{-1}\tildetau(Q)+a\sum_{S\in\rot(\SG)}Q^{-1}\bartau(Q,S)}\\
&=\iso[\bigg]{R}{\tildetau(P)-PQ^{-1}\tildetau(Q)-a\sum_{S\in\rot(\SG)}(\bartau(P,S)-PQ^{-1}\bartau(Q,S))}\\
&=\iso[\bigg]{R}{\tildetau(R)-\bartau(PQ^{-1},Q)-a\sum_{S\in\rot(\SG)}(\bartau(P,S)-PQ^{-1}\bartau(Q,S))}\\
&=\iso[\bigg]{R}{\tildetau(R)-(a\abs{\rot(\SG)}+bn)\bartau(PQ^{-1},Q)-a\sum_{S\in\rot(\SG)}\parens{\bartau(P,S)-PQ^{-1}\bartau(Q,S)}}\\
&=\iso[\Big]{I_{d_2}}{\bartau(R,Q)}^{-bn}\iso[\bigg]{R}{\tildetau(R)-a\sum_{S\in\rot(\SG)}\parens{\bartau(PQ^{-1},Q)+\bartau(P,S)-PQ^{-1}\bartau(Q,S)}}\\
&=\iso[\Big]{I_{d_2}}{\bartau(R,Q)}^{-bn}\iso[\bigg]{R}{\tildetau(R)-a\sum_{S\in\rot(\SG)}\parens{\tildetau(PQ^{-1})-\tildetau(PS)+PQ^{-1}\tildetau(QS)}}.
\intertext{We use that $\sum_{S\in\rot(\SG)}\tildetau(S)=\sum_{S\in\rot(\SG)}\tildetau(TS)$ for all $T\in\rot(\SG)$.}
pq^{-1}&=\iso[\Big]{I_{d_2}}{\bartau(R,Q)}^{-bn}\iso[\bigg]{R}{\tildetau(R)-a\sum_{S\in\rot(\SG)}\parens{\tildetau(PQ^{-1})-\tildetau(PQ^{-1}S)+PQ^{-1}\tildetau(S)}}\\
&=\iso[\Big]{I_{d_2}}{\bartau(R,Q)}^{-bn}\iso[\bigg]{R}{\tildetau(R)-a\sum_{S\in\rot(\SG)}\bartau(R,S)}\in \T_\SG^n\P_\SG^{(n)}.
\end{align*}
Thus, we have $\T_\SG^n\P_\SG^{(n)}\le\SG$.

Let $\pi$ be the natural surjective homomorphism from $\G$ to $\SG$ with kernel $\F$.
It holds $\pi^{-1}(\T_\SG^n\P_\SG^{(n)})=\T^n\F\P^{(n)}$ and thus, $\T^n\F\P^{(n)}$ is a subgroup of $\G$.

Now let $N\in(n\N)\cap \MM$.
Since $n$ divides $N$, we have $\T^N\subset\T^n\F\P^{(n)}$.
Since $N\in \MM$, we have $\T^N\triangleleft\T^n\F\P^{(n)}$.
\end{proof}
Recall Definition~\ref{Definition:QuotientGroup}.
\begin{Remark}\label{Remark:TFPn}
Let $n\in\N$ be coprime to $\abs{\rot(\SG)}$.
Let $m\in \MM$, $N=nm$ and $t_1,\dots,t_{d_2}\in\T^n$ such that $\pi(\{t_1,\dots,t_{d_2}\})$ generates $\T_\SG^n$, where $\pi\colon\T\F\to\T_\SG$ is the natural surjective homomorphism.
Then, the map
\begin{align*}
&\{0,\dots,m-1\}^{d_2}\times\F\times\P^{(n)}\to(\T^n\F\P^{(n)})_N,&&((n_1,\dots,n_{d_2}),f,p)\mapsto t_1^{n_1}\dots t_{d_2}^{n_{d_2}}fp\T^N
\end{align*}
is bijective.
\end{Remark}
The following lemma characterizes the elements of the groups $\G_N$, $(\T^n\F)_N$ and $(\T^m)_N$ for appropriate $n,m,N\in\N$.
\begin{Lemma}\label{Lemma:ElementsTFPNEW}
Let $t_1,\dots,t_{d_2}\in\T$ such that the set $\pi(\{t_1,\dots,t_{d_2}\})$ generates $\T_\SG$, where $\pi\colon\T\F\to\T_\SG$ is the natural surjective homomorphism.
For all $N\in \MM$ it holds
\begin{align*}
&\G_N=\set[\Big]{t_1^{n_1}\dots t_{d_2}^{n_{d_2}}fp\T^N}{n_1,\dots,n_{d_2}\in\{0,\dots,N-1\},f\in\F,p\in\P^{(1)}}
\intertext{and particularly $\abs{\G_N}=N^{d_2}\abs\F\abs{\rot(\SG)}$.\newline
For all $n\in\N$ and $N\in(n\N)\cap \MM$ it holds}
&(\T^n\F)_N=\set[\Big]{t_1^{nn_1}\dots t_{d_2}^{nn_{d_2}}f\T^N}{n_1,\dots,n_{d_2}\in\{0,\dots,(N/n)-1\},f\in\F}
\intertext{and particularly $\abs{(\T^n\F)_N}=(N/n)^{d_2}\abs\F$.
Moreover, for all $n\in\N$ and $N\in(n\N)\cap \MM$ it holds $(\T^n\F)_N\triangleleft\G_N$.\newline
For all $m\in \MM$ and $N\in m\N$ it holds}
&(\T^m)_N=\set[\Big]{t_1^{mn_1}\dots t_{d_2}^{mn_{d_2}}\T^N}{n_1,\dots,n_{d_2}\in\{0,\dots,(N/m)-1\}},
\end{align*}
$(\T^m)_N$ is a subgroup of the center of $(\T\F)_N$ and particularly $\abs{(\T^m)_N}=(N/m)^{d_2}$.
\end{Lemma}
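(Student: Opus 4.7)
The plan is to establish all three characterizations in parallel, exploiting three structural facts: the unique set-theoretic decomposition $\G=\T\F\P^{(1)}$, the bijection $\pi|_\T\colon\T\to\T_\SG\cong\Z^{d_2}$ with $\pi(t_1),\dots,\pi(t_{d_2})$ as a free $\Z$-basis, and normality of $\T^N$ in $\G$.

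For the first characterization of $\G_N$, I would first verify the unique factorization $g=tfp$ with $t\in\T$, $f\in\F$, $p\in\P^{(1)}$: existence follows from $\pi(\P^{(1)})=\P_\SG^{(1)}$ being a transversal for $\SG/\T_\SG$ and $\pi|_\T$ being bijective onto $\T_\SG$; uniqueness uses $\T\cap\F=\{\id\}$. Writing $\pi(t)=\prod_i\pi(t_i)^{m_i}$ and setting $n_i=m_i\bmod N\in\{0,\dots,N-1\}$, define $s_{\vec n}:=t_1^{n_1}\cdots t_{d_2}^{n_{d_2}}$. Since $\pi(ts_{\vec n}^{-1})\in\T_\SG^N$, I would decompose $ts_{\vec n}^{-1}=h\tilde f$ uniquely with $h\in\T^N$, $\tilde f\in\F$ (possible because $\T^N\triangleleft\G$ and $\T^N\cap\F=\{\id\}$). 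Normality of $\T^N$ absorbs $h$ on the right of any product, and normality of $\F\triangleleft\G$ lets me write $\tilde f s_{\vec n}=s_{\vec n}f''$ for a suitable $f''\in\F$, yielding $g\T^N=s_{\vec n}f'p\T^N$ with $f'=f''f\in\F$.

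For uniqueness of the triple $(\vec n,f,p)$, projecting $s_{\vec n}fp\T^N=s_{\vec n'}f'p'\T^N$ via $\pi$ and comparing linear components gives $p=p'$; then using $\rot(\SG)$-invariance of the lattice $\LL=\trans(\T_\SG)$ on the translational component forces $\pi(s_{\vec n})^{-1}\pi(s_{\vec n'})\in\T_\SG^N$, hence $n_i=n_i'$; finally $\T^N\cap\F=\{\id\}$ gives $f=f'$. The cardinality $|\G_N|=N^{d_2}|\F||\rot(\SG)|$ is then immediate by counting parameters. The second characterization is the same argument restricted to $g\in\T^n\F$ (equivalently $\pi(g)\in\T_\SG^n$), which forces the exponents $m_i$ to lie in $n\Z$ and their residues modulo $N=nk$ to lie in $\{0,n,\dots,(k-1)n\}$; this yields the parametrization and the count $(N/n)^{d_2}|\F|$. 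Normality $(\T^n\F)_N\triangleleft\G_N$ then follows from $\T^n\F\triangleleft\G$ (Lemma~\ref{Lemma:TFP}\ref{item:abc}) via the correspondence theorem, since $\T^N\subseteq\T^n\F$.

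For the third characterization, Lemma~\ref{Lemma:helptranslationgroup} supplies the group isomorphism $\Z^{d_2}\to\T^m$, $(a_i)\mapsto t_1^{ma_1}\cdots t_{d_2}^{ma_{d_2}}$, under which $\T^N$ corresponds to the sublattice $(N/m)\Z^{d_2}$; this immediately yields the stated representation and cardinality $(N/m)^{d_2}$. Centrality of $(\T^m)_N$ inside $(\T\F)_N$ drops out of Proposition~\ref{Proposition:center2}. The main obstacle is that $\T$ is only a section and not a subgroup, so $\G=\T\F\P^{(1)}$ is merely a set-theoretic decomposition and the elements $s_{\vec n}$ do not a priori commute with $\F$ or $\P^{(1)}$; one must carefully track conjugations to move $h\in\T^N$ and $\tilde f\in\F$ into canonical position, and the $\rot(\SG)$-invariance of $\LL$ is the key ingredient that lets the uniqueness argument descend cleanly through $\pi$.
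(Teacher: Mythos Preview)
Your proof is correct and follows essentially the same approach as the paper. The paper's own proof is extremely terse---it merely states that $\T\times\F\times\P^{(1)}\to\G$, $(t,f,p)\mapsto tfp$ is bijective (since $\P^{(1)}$ is a transversal for $\G/\T\F$) and then cites Lemma~\ref{Lemma:TFP}, Theorem~\ref{Theorem:Ring}, Lemma~\ref{Lemma:helptranslationgroup} and Proposition~\ref{Proposition:center2}; your argument is a careful expansion of exactly these ingredients, with the necessary bookkeeping (moving the $\T^N$-factor via normality, conjugating the $\F$-factor past $s_{\vec n}$, and reading off uniqueness by projecting through $\pi$) spelled out.
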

\begin{proof}
Since $\P^{(1)}$ is a representation set of $\G/\T\F$, the map $\T\times\F\times\P^{(1)}\to\G$, $(t,f,p)\mapsto tfp$ is bijective.
The assertions are clear by Lemma~\ref{Lemma:TFP}, Theorem~\ref{Theorem:Ring} and Lemma~\ref{Lemma:helptranslationgroup}, Proposition~\ref{Proposition:center2}.
\end{proof}
The following theorem characterizes the group $\G_N$ for appropriate $N\in\N$.
\begin{Theorem}\label{Theorem:semidirectproductG_NNEW}
Let $m\in \MM$.
Let $n\in \N$ be coprime to $m$ and $\abs{\rot(\SG)}$.
Let $N=nm$.
Then, we have
\[\G_N=(\T^m)_N\rtimes(\T^n\F\P^{(n)})_N\]
and $(\T^m)_N$ is isomorphic to $\Z_n^{d_2}$.
\end{Theorem}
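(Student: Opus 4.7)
My plan is to establish $\G_N = (\T^m)_N \rtimes (\T^n\F\P^{(n)})_N$ by verifying the three standard criteria: normality of $(\T^m)_N$ in $\G_N$, the subgroup property of $(\T^n\F\P^{(n)})_N$, and triviality of their intersection; a cardinality count will then force the product to exhaust $\G_N$. The identification $(\T^m)_N \cong \Z_n^{d_2}$ is the quickest part: since $m \in \MM$, Lemma~\ref{Lemma:helptranslationgroup} gives $\T^m \cong \Z^{d_2}$, and under this isomorphism $\T^N = \T^{nm} = (\T^m)^n$ corresponds to $n\Z^{d_2}$, so $(\T^m)_N \cong \Z^{d_2}/n\Z^{d_2} \cong \Z_n^{d_2}$.

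Normality of $(\T^m)_N$ in $\G_N$ is immediate from $\T^m \triangleleft \G$ and $\T^N \subset \T^m$ (both valid since $m, N \in \MM$). The subgroup property of $(\T^n\F\P^{(n)})_N$ is precisely Lemma~\ref{Lemma:coprime}, using $N \in (n\N) \cap \MM$. From Lemma~\ref{Lemma:ElementsTFPNEW} and Remark~\ref{Remark:TFPn} I read off $\abs{(\T^m)_N} = n^{d_2}$ and $\abs{(\T^n\F\P^{(n)})_N} = m^{d_2}\abs{\F}\abs{\rot(\SG)}$, whose product equals $\abs{\G_N} = N^{d_2}\abs{\F}\abs{\rot(\SG)}$, so once trivial intersection is established the product automatically exhausts $\G_N$.

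The real work, and the main obstacle I anticipate, is to show $\T^m \cap \T^n\F\P^{(n)} \subset \T^N$. Applying $\pi$ I first compute $(\T_\SG^n\cdot\P_\SG^{(n)}) \cap \T_\SG$: an element $tp$ with $t\in\T_\SG^n$, $p\in\P_\SG^{(n)}$ lies in $\T_\SG$ only when the rotation part of $p$ is trivial, i.e.\ when $p$ is the $P=I_{d_2}$ representative in $\P_\SG^{(n)}$. Using $\bartau(I_{d_2},Q) = \tildetau(I_{d_2})$ together with the defining relation $a(n)\abs{\rot(\SG)} + bn = 1$ (this is where the coprimality $\gcd(n,\abs{\rot(\SG)})=1$ enters), that representative simplifies to a translation by $bn\,\tildetau(I_{d_2}) \in n\LL$ and so already lies in $\T_\SG^n$. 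Hence $\pi(s) \in \T_\SG^m \cap \T_\SG^n = \T_\SG^{nm} = \T_\SG^N$, where $\gcd(m,n)=1$ and $\T_\SG \cong \Z^{d_2}$ are used. To lift back, I write $s = \phi(u)^m$ with $u \in \T_\SG$ via the isomorphism of Lemma~\ref{Lemma:helptranslationgroup}; then $u^m = \pi(s) \in \T_\SG^{nm}$ forces $u \in \T_\SG^n$ by torsion-freeness, say $u = v^n$. Applying the same (group) isomorphism to $v^n$ finally gives $s = \phi(v^n)^m = (\phi(v)^m)^n = \phi(v)^N \in \T^N$, which closes the argument.
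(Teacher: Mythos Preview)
Your proof is correct and follows the same overall strategy as the paper: verify normality of $(\T^m)_N$, invoke Lemma~\ref{Lemma:coprime} for the subgroup property, establish trivial intersection after projecting via $\pi$ (using that the $P=I_{d_2}$ element of $\P_\SG^{(n)}$ lies in $\T_\SG^n$ and that $\gcd(m,n)=1$), and finish with a cardinality count. The only minor variation is in the trivial-intersection step: the paper works in the finite quotient $\SG_N$ and concludes $(\T_\SG^m)_N\cap(\T_\SG^n)_N=\{\id\}$ by Lagrange's theorem together with injectivity of $\pi|_{(\T^m)_N}$, whereas you work in $\SG$ itself, compute $\T_\SG^m\cap\T_\SG^n=\T_\SG^{nm}$ directly in $\Z^{d_2}$, and then lift back via the isomorphism of Lemma~\ref{Lemma:helptranslationgroup}.
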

\begin{proof}
Let $m\in \MM$.
Let $n\in\N$ be coprime to $m$ and $\abs{\rot(\SG)}$.
Let $N=nm$.
By Theorem~\ref{Theorem:Ring} we have $\T^m\triangleleft\G$ and $\T^N\triangleleft\G$, and by Lemma~\ref{Lemma:helptranslationgroup} we have $\T^N\triangleleft\T^m$.
Hence, we have
%by the third isomorphism theorem 
%
\begin{equation}\label{eq:E1NEW}
(\T^m)_N\triangleleft\G_N.
\end{equation}
By Lemma~\ref{Lemma:helptranslationgroup} the group $\T^m$ is isomorphic to $\Z^{d_2}$ and thus, $(\T^m)_N$ is isomorphic to $\Z_n^{d_2}$.
By Lemma~\ref{Lemma:coprime} we have
\begin{equation}\label{eq:E2NEW}
(\T^n\F\P^{(n)})_N\le\G_N.
\end{equation}
For all $N\in\N$ and $\H\le\SG$ such that $\T_\SG^N$ is a
subgroup of $\H$, we denote
\[\H_N:=\H/\T_\SG^N.\]
Let $\pi\colon\G_N\to\SG_N$ be the natural surjective homomorphism with kernel $\set{g\T^N}{g\in\F}$.
We have
\begin{align}\label{eq:zt2NEW}
\pi((\T^m)_N\cap(\T^n\F\P^{(n)})_N)&\subset\pi((\T^m)_N)\cap\pi((\T^n\F\P^{(n)})_N)\nonumber\\
&=(\T_\SG^m)_N\cap(\T_\SG^n\P_\SG^{(n)})_N\nonumber\\
&=(\T_\SG^m)_N\cap(\T_\SG^n)_N\nonumber\\
&=\{\id\},
\end{align}
where in the third step we used that for all $p\in\P_\SG^{(n)}$ such that $\rot(p)=I_{d_2}$ we have $p\in\T_\SG^n$ and in the last step we used that the numbers $n^{d_2}$ and $m^{d_2}$ are coprime, $\abs{(\T_\SG^m)_N}=n^{d_2}$, $\abs{(\T_\SG^n)_N}=m^{d_2}$ and Lagrange's theorem.
By \eqref{eq:zt2NEW} and since $\pi|_{(\T^m)_N}$ is injective, we have
\begin{equation}\label{eq:E3NEW}
(\T^m)_N\cap(\T^n\F\P^{(n)})_N=\{\id\}.
\end{equation}
We have
\begin{equation}\label{eq:aoneNEW}
\abs{\G_N}=\abs{\ker(\pi)}\abs{\pi(\G_N)}=\abs{\set{g\T^N}{g\in\F}}\abs{\SG_N}=\abs\F\abs{\rot(\SG)} N^{d_2},
\end{equation}
see Lemma~\ref{Lemma:ElementsTFPNEW}, and
\begin{align}\label{eq:athreeNEW}
\abs{(\T^n\F\P^{(n)})_N}&=\abs{\ker(\pi|_{(\T^n\F\P^{(n)})_N})}\abs{\pi((\T^n\F\P^{(n)})_N)}\nonumber\\
&=\abs\F\abs{(\T_\SG^n\P_\SG^{(n)})_N}=\abs\F\abs{\P_\SG^{(n)}}\abs{(\T_\SG^n)_N}=\abs\F\abs{\rot(\SG)} m^{d_2},
\end{align}
see Remark~\ref{Remark:TFPn}.
By \eqref{eq:aoneNEW}, \eqref{eq:athreeNEW} and since $(\T^m)_N$ is isomorphic to $\Z_n^{d_2}$, we have
\begin{equation}\label{eq:E5NEW}
\abs{\G_N}=\abs{(\T^m)_N}\abs{(\T^n\F\P^{(n)})_N}.
\end{equation}
By \eqref{eq:E1NEW}, \eqref{eq:E2NEW}, \eqref{eq:E3NEW} and \eqref{eq:E5NEW} we have
\[\G_N=(\T^m)_N\rtimes (\T^n\F\P^{(n)})_N.\qedhere\]
\end{proof}
\begin{Corollary}\label{Corollary:semidirectproductG_NNEW}
Let $m\in \MM$, $\tilde n\in \N$, $n=\tilde nm\abs{\rot(\SG)}+1$ and $N=nm$.
Then we have
\begin{align*}
&\P_\SG^{(n)}=\set[\bigg]{\iso[\bigg]{P}{\tildetau(P)+\tilde nm\sum_{Q\in\rot(\SG)} \bartau(P,Q)}}{P\in\rot(\SG)}
\shortintertext{and}
&\G_N=(\T^m)_N\rtimes(\T^{n}\F\P^{(n)})_N.
\end{align*}
\end{Corollary}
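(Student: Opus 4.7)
The plan is to reduce the corollary to Theorem~\ref{Theorem:semidirectproductG_NNEW} by an explicit computation of $a(n)$ for the specific choice $n = \tilde n m \abs{\rot(\SG)} + 1$. First I would verify the coprimality hypotheses of that theorem. Writing $n = \tilde n m \abs{\rot(\SG)} + 1$ immediately yields $n \equiv 1 \pmod m$ and $n \equiv 1 \pmod{\abs{\rot(\SG)}}$, so $\gcd(n,m) = \gcd(n,\abs{\rot(\SG)}) = 1$. Hence Theorem~\ref{Theorem:semidirectproductG_NNEW} applies to the pair $(m,n)$ and directly gives $\G_N = (\T^m)_N \rtimes (\T^n\F\P^{(n)})_N$, settling the second assertion.

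For the first assertion I would compute $a(n)$ explicitly. The defining identity of $n$ rearranges to $(-\tilde n m)\cdot \abs{\rot(\SG)} + 1\cdot n = 1$, which exhibits $(\tilde a_0, b_0) = (-\tilde n m, 1)$ as one particular solution of the Bézout equation $\tilde a\abs{\rot(\SG)} + bn = 1$. Since $\gcd(\abs{\rot(\SG)}, n) = 1$, the complete set of integer solutions is $\tilde a = -\tilde n m + kn$, $b = 1 - k\abs{\rot(\SG)}$ with $k \in \Z$. The constraint $\tilde a \le 0$ becomes $k \le \tilde n m / n$, and because $n = \tilde n m\abs{\rot(\SG)} + 1 > \tilde n m$ (as $\abs{\rot(\SG)} \ge 1$ and $\tilde n m \ge 1$) we have $\tilde n m / n < 1$, forcing $k \le 0$. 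The maximum is thus attained at $k=0$, giving $a(n) = -\tilde n m$. Substituting $-a(n) = \tilde n m$ into the definition of $\P_\SG^{(n)}$ produces the explicit expression in the statement.

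I do not anticipate any genuine obstacle here. The integer $n$ has been engineered precisely so that the modular inverse of $\abs{\rot(\SG)}$ modulo $n$ is visible by inspection, and the maximality of $-\tilde n m$ in the definition of $a(n)$ is immediate from the trivial bound $\tilde n m < n$. Everything else is a direct invocation of the preceding theorem.
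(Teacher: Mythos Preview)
Your proposal is correct and follows essentially the same approach as the paper: verify coprimality from $n\equiv 1$ modulo $m$ and $\abs{\rot(\SG)}$, compute $a(n)=-\tilde n m$ by identifying the obvious B\'ezout solution and checking it is maximal among nonpositive ones, and then invoke Theorem~\ref{Theorem:semidirectproductG_NNEW}. The only cosmetic difference is that the paper rewrites the B\'ezout equation via the substitution $n=\tilde n m\abs{\rot(\SG)}+1$ rather than parametrizing all solutions, but the content is identical.
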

\begin{proof}
Let $m\in \MM$, $\tilde n\in \N$, $n=\tilde nm\abs{\rot(\SG)}+1$ and $N=nm$.
In particular, $n$ is coprime to $m$ and $\abs{\rot(\SG)}$.
We have
\begin{align*}
&\max\set[\big]{\tilde a\in\braces{0,-1,\dots}}{\exists\,b\in\Z\text{ such that }\tilde a\abs{\rot(\SG)}+bn=1}\\
&\qquad=\max\set[\big]{\tilde a\in\braces{0,-1,\dots}}{\exists\,b\in\N\text{ such that }(\tilde a+b\tilde nm)\abs{\rot(\SG)}+b=1}\\
&\qquad=-\tilde nm
\end{align*}
and hence,
\[\P_\SG^{(n)}=\set[\bigg]{\iso[\bigg]{P}{\tildetau(P)+\tilde nm\sum_{Q\in\rot(\SG)} \bartau(P,Q)}}{ P\in\rot(\SG)}.\]
By Theorem~\ref{Theorem:semidirectproductG_NNEW} we have $\G_N=(\T^m)_N\rtimes(\T^{n}\F\P^{(n)})_N$.
\end{proof}
\begin{Corollary}\label{Corollary:helpsemidirectproductNEW}
Suppose that $\G$ is a space group.
Let $N\in\N$ be coprime to $\abs{\rot(\G)}$. Then we have
\[\G_N=\T_N\rtimes\set{g\T^N}{g\in\P^{(N)}}.\]
\end{Corollary}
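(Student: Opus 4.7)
The plan is to deduce this corollary as a direct specialization of Theorem~\ref{Theorem:semidirectproductG_NNEW} to the space group setting. First I would observe that when $\G$ is a space group we have $d_1=0$, $d_2=d$, $\SG=\G$, and consequently $\F=\{\id\}$ together with $\T=\T_\SG$, which is already a normal subgroup of $\G$ isomorphic to $\Z^d$ (Theorem~\ref{Theorem:SpaceGroup}). In particular $1\in\MM$, so the choice $m=1$ is admissible in Theorem~\ref{Theorem:semidirectproductG_NNEW}.

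Next I would set $m=1$ and $n=N$ in that theorem. The coprimality hypothesis ``$n$ coprime to $m$'' becomes trivial since $m=1$, and ``$n$ coprime to $\abs{\rot(\SG)}$'' reduces exactly to the assumption of the corollary, since $\rot(\SG)=\rot(\G)$. With these choices we have $N=nm$, so Theorem~\ref{Theorem:semidirectproductG_NNEW} yields
\[ \G_N=(\T^1)_N\rtimes(\T^N\F\P^{(N)})_N=\T_N\rtimes(\T^N\P^{(N)})_N, \]
where the last equality uses $\F=\{\id\}$.

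It then remains to identify $(\T^N\P^{(N)})_N$ with $\set{g\T^N}{g\in\P^{(N)}}$. This is immediate: every element of $\T^N\P^{(N)}$ has the form $tp$ with $t\in\T^N$ and $p\in\P^{(N)}$, and $tp\T^N=p\T^N$ because $t\in\T^N$; conversely any $p\in\P^{(N)}$ lies in $\T^N\P^{(N)}$. Hence the two sets of cosets coincide as subgroups of $\G_N$, completing the proof.

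I do not anticipate a genuine obstacle here: all structural work (subgroup property of $\T^n\P^{(n)}$, the semidirect product decomposition, the cardinality count) has already been carried out in Theorem~\ref{Theorem:semidirectproductG_NNEW}. The only point worth double-checking is that the specialization $m=1$ is legitimate, which hinges precisely on $\F=\{\id\}$ forcing $1\in\MM$, and that the coprimality of $N$ with $\abs{\rot(\G)}$ suffices once $m=1$.
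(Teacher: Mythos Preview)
Your proposal is correct and follows essentially the same approach as the paper: both specialize Theorem~\ref{Theorem:semidirectproductG_NNEW} with $m=1$, noting that $\F=\{\id\}$ (hence $1\in\MM$, in fact $\MM=\N$) and $\SG=\G$, and then identify $(\T^N\P^{(N)})_N$ with $\set{g\T^N}{g\in\P^{(N)}}$. The only cosmetic difference is that the paper states $\MM=\N$ directly rather than just $1\in\MM$, but by Theorem~\ref{Theorem:Ring} these are equivalent.
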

\begin{proof}
Let $\G$ be a space group.
We have $\F=\{\id\}$ and $\MM=\N$.
For all $N\in\N$ coprime to $\abs{\rot(\G)}$, we have $(\T^N\P^{(N)})/\T^N=\set{g\T^N}{g\in\P^{(N)}}$.
Thus, Theorem~\ref{Theorem:semidirectproductG_NNEW} implies the assertion.
\end{proof}
\begin{Corollary}
Suppose that $\G$ is a space group.
Let $n\in\N$ and $N=n\abs{\rot(\G)}+1$.
Then it holds
\begin{align*}
&\P^{(N)}=\set[\bigg]{\iso[\bigg]{P}{\tildetau(P)+n\sum_{Q\in\rot(\G)} \bartau(P,Q)}}{ P\in\rot(\G)}
\shortintertext{and}
&\G_N=\T_N\rtimes \set{g\T^N}{g\in\P^{(N)}}.
\end{align*}
\end{Corollary}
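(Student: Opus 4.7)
The plan is to recognize that this statement is a direct specialization of Corollary~\ref{Corollary:semidirectproductG_NNEW} to space groups, combined with the identification provided by Corollary~\ref{Corollary:helpsemidirectproductNEW}. Since $\G$ is a space group we have $\F = \{\id\}$, $\SG = \G$, $\T = \T_\SG$, and $\T$ is itself a normal subgroup of $\G$, so $1 \in \MM$ and in fact $\MM = \N$ by Theorem~\ref{Theorem:Ring}.

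The first step is therefore to apply Corollary~\ref{Corollary:semidirectproductG_NNEW} with $m = 1$ and $\tilde n = n$. Writing $n'$ for the integer called $n$ in that corollary (to avoid notational collision), the corollary prescribes $n' = \tilde n m \abs{\rot(\SG)} + 1 = n\abs{\rot(\G)} + 1$ and hence $N = n' m = n\abs{\rot(\G)} + 1$, matching the hypothesis here. Since $m = 1$ is trivially coprime to $n$ and $n'$ is coprime to $\abs{\rot(\G)}$ by construction, the hypotheses of the corollary are met. The explicit description of $\P_\SG^{(N)}$ given in that corollary (with $\tilde n m = n$) yields the first displayed equality; because $\F = \{\id\}$, we have $\P^{(N)} = \P_\SG^{(N)}$, so no further identification is needed.

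For the second displayed equality, Corollary~\ref{Corollary:semidirectproductG_NNEW} gives
\[\G_N = (\T^m)_N \rtimes (\T^{n'}\F\P^{(N)})_N = \T_N \rtimes (\T^N \P^{(N)})_N,\]
where in the second equality I have used $m = 1$, $n' = N$, and $\F = \{\id\}$. Finally, Corollary~\ref{Corollary:helpsemidirectproductNEW} (applied with its $N$ equal to the present $N$, noting that $N$ is coprime to $\abs{\rot(\G)}$) identifies $(\T^N \P^{(N)})_N$ with $\set{g\T^N}{g \in \P^{(N)}}$, completing the argument.

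There is no real obstacle: the statement is purely an unpacking of specializations and a bookkeeping exercise matching indices. The only detail worth checking explicitly is the arithmetic identity $N = n\abs{\rot(\G)} + 1$ aligning with the $n' = \tilde n m \abs{\rot(\SG)} + 1$ of the earlier corollary under the substitution $m = 1$, $\tilde n = n$, which is immediate.
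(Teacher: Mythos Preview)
Your proof is correct and follows essentially the same route as the paper, which simply cites Corollaries~\ref{Corollary:semidirectproductG_NNEW} and~\ref{Corollary:helpsemidirectproductNEW}. One minor streamlining: since $N=n\abs{\rot(\G)}+1$ is coprime to $\abs{\rot(\G)}$, Corollary~\ref{Corollary:helpsemidirectproductNEW} already yields the second displayed equality $\G_N=\T_N\rtimes\set{g\T^N}{g\in\P^{(N)}}$ directly, so you need not first extract the semidirect product from Corollary~\ref{Corollary:semidirectproductG_NNEW} and then separately invoke the identification $(\T^N\P^{(N)})_N=\set{g\T^N}{g\in\P^{(N)}}$ from its proof.
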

\begin{proof}
This is clear by Corollary~\ref{Corollary:semidirectproductG_NNEW} and Corollary~\ref{Corollary:helpsemidirectproductNEW}.
\end{proof}
\begin{Corollary}
Suppose that $\G=\T\F$.
Let $m\in \MM$ and $n\in\N$ be coprime.
Let $N=nm$.
Then it holds
\[\G_N=(\T^m)_N\times(\T^n\F)_N.\]
\end{Corollary}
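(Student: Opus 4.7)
My plan is to derive this corollary directly from Theorem~\ref{Theorem:semidirectproductG_NNEW}. The key observation is that the assumption $\G = \T\F$ forces the point group $\rot(\SG)$ to be trivial, which will simultaneously make the coprimality hypothesis of the theorem hold automatically and will collapse the semidirect product into a direct product.

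First I would check the structural consequences of $\G = \T\F$. Since $\pi(\G) = \SG$ and $\pi(\T\F) = \T_\SG$, the equality $\G = \T\F$ forces $\SG = \T_\SG$, so $\rot(\SG) = \{I_{d_2}\}$ and $|\rot(\SG)| = 1$. In particular, the condition that $n$ be coprime to $|\rot(\SG)|$ is vacuous, so the hypothesis of Theorem~\ref{Theorem:semidirectproductG_NNEW} is satisfied just from $\gcd(n,m) = 1$. Moreover, in this setting $\P_\SG^{(n)} = \{\id\}$ (the map $\tildetau$ is defined only on the trivial rotation and $\bartau$ contributes nothing), so $\P^{(n)} \subset \F$, and hence $\T^n \F \P^{(n)} = \T^n \F$. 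Applying the theorem therefore gives
\[ \G_N = (\T^m)_N \rtimes (\T^n\F)_N. \]

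The only remaining point is to upgrade the semidirect product to a direct product, which amounts to showing that $(\T^m)_N$ is central in $\G_N$ (so in particular commutes with $(\T^n\F)_N$). This is where Proposition~\ref{Proposition:center2} enters: it states that $\T^m$ lies in the center of $\T\F$ whenever $m \in \MM$. Since $\G = \T\F$ by hypothesis, $\T^m$ lies in the center of $\G$ itself, and passing to the quotient $\G_N = \G/\T^N$ shows that $(\T^m)_N$ is central in $\G_N$. Thus both factors in the semidirect decomposition commute elementwise, so the product is in fact direct:
\[ \G_N = (\T^m)_N \times (\T^n\F)_N. \]

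I expect no real obstacle here: the corollary is a straightforward specialization, and all the heavy lifting has already been done in Theorem~\ref{Theorem:semidirectproductG_NNEW} and Proposition~\ref{Proposition:center2}. The only subtlety is making sure one records the centrality of $\T^m$ inside the whole group $\G$, which is immediate from $\G = \T\F$ combined with Proposition~\ref{Proposition:center2}.
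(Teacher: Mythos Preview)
Your proposal is correct and follows essentially the same route as the paper: deduce $\rot(\SG)=\{I_{d_2}\}$ from $\G=\T\F$, collapse $\T^n\F\P^{(n)}$ to $\T^n\F$, apply Theorem~\ref{Theorem:semidirectproductG_NNEW}, and then upgrade the semidirect product to a direct product using the centrality of $\T^m$ in $\T\F$ from Proposition~\ref{Proposition:center2}. One small imprecision: the claim $\P_\SG^{(n)}=\{\id\}$ (and hence $\P^{(n)}\subset\F$) tacitly assumes the choice $\tildetau(I_{d_2})=0$, which the paper makes explicit; without it one still has $\P_\SG^{(n)}\subset\T_\SG^n$, so $\T^n\F\P^{(n)}=\T^n\F$ holds regardless and your conclusion is unaffected.
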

\begin{proof}
Suppose that $\G=\T\F$.
Let $m\in \MM$ and $n\in\N$ be coprime.
We have $\SG=\T_\SG$ and $\rot(\SG)=\{I_{d_2}\}$.
Without loss of generality we assume that $\tildetau=0$.
We have $\bartau=0$ and $\P_\SG^{(n)}=\{\id\}$.
Without loss of generality we assume that $\P^{(n)}=\{\id\}$.
By Theorem~\ref{Theorem:semidirectproductG_NNEW}, Lemma~\ref{Lemma:TFP}\ref{item:abc} and Proposition~\ref{Proposition:center2} we have $\G_N=(\T^m)_N\times(\T^n\F)_N$.
\end{proof}
%

%-------------------------------------------------------------------
%-------------------------------------------------------------------
%\appendix
\begin{appendices}
\section{Dual spaces and induced representations}\label{subsection:harmonic}
With the aim to keep our presentation largely self-contained and to provide easy references, we recall some topics in representation theory in a short appendix. In our set-up it is not restrictive to only consider finite-dimensional representations, see Remark~\ref{Remark:DefRep} below.
\begin{Definition}\label{Definition:Representation}
Let $\H$ be a finite group or a discrete subgroup of $\E(d)$.
A \emph{representation} of $\H$ is a homomorphism $\rho\colon\H\to\U(d_\rho)$, where $d_\rho\in\N$ is the \emph{dimension} of $\rho$ and $\U(d_\rho)$ is the group of all unitary matrices in $\C^{d_\rho\times d_\rho}$. 
Two representations $\rho,\rho'$ of $\H$ are said to be \emph{(unitarily) equivalent} if $d_\rho=d_{\rho'}$ and there exists some $T\in\U(d_\rho)$ such that
\[T^H\rho(g)T=\rho'(g)\qquad\text{for all }g\in\H.\]
A representation $\rho$ of $\H$ is said to be \emph{irreducible} if the only subspaces of $\C^{d_\rho}$ invariant under $\set{\rho(g)}{g\in\H}$ are $\{0\}$ and $\C^{d_\rho}$.
Let $\dual{\H}$ denote the set of all equivalence classes of irreducible representations of $\H$.
One calls $\dual{\H}$ the \emph{dual space} of $\H$.
If $\NN$ is a normal subgroup of $\H$, then the group $\H$ acts on the set of all representations of $\NN$ by
\[g\cdot \rho(n):=\rho(g^{-1}n g)\qquad\text{for all }g\in\H\text{, representations }\rho\text{ of }\NN \text{ and }n\in\NN.\]
For given representations $\rho_1,\dots,\rho_n$ of $\H$, we define the \emph{direct sum}
\begin{align*}
\oplus_{i=1}^n\rho_i\colon&\H\to\U(d), \quad 
g\mapsto\oplus_{i=1}^n(\rho_i(g)),
\end{align*}
where $d=\sum_{i=1}^nd_{\rho_i}$.
In a canonical way, the above group action and terms \emph{dimension}, \emph{irreducible} and \emph{direct sum} are also defined for equivalence classes of representations. 
For any $\rho \in \dual\NN$ we denote by $\H^\rho$ the stabilizer of $\rho$ in $\H$.
\end{Definition}
\begin{Remark}\label{Remark:DefRep}
In \cite{Moore1972} the following theorem is proved for any locally compact group:
There exists an integer $M\in\N$ such that the dimension of every irreducible representation is less than or equal to $M$ if and only if there is an open abelian subgroup of finite index.
This, in particular, applies to finite groups and discrete subgroups of $\E(d)$.
\end{Remark}
A caveat on notation:
For a representation and for an equivalence class of representations we use the symbol $\chi$ if it is one-dimensional and $\rho$ otherwise.
For every one-dimensional representation $\chi$ its equivalence class is a singleton which we also call a \emph{representation} and denote by $\chi$.

It is elementary to see that, if $\H<\E(d)$ is a discrete subgroup of $\E(d)$, then characters (one-dimensional representations) act by multiplication on $\dual\H$. 
Moreover, for any normal subgroup $\NN$ of $\H$ the action of $\NN$ on $\dual\NN$ is trivial: 
\begin{equation}\label{eq:ActionTrivial}
g\cdot\rho=\rho\qquad\text{for all }g\in\NN\text{ and }\rho\in\dual\NN, 
\end{equation}
\ie, $\H^\rho \supset \NN$. As a consequence, also $\H/\NN$ acts on $\dual \NN$. For $\rho \in \dual\NN$ we denote by $(\H/\NN)^\rho$ the stabilizer and by $(\H/\NN) \cdot \rho$ the orbit of $\rho$ with respect to  this action. 
Also the following two propositions are well-known. Their proofs (in the more general setting of locally compact groups) can be found, \eg, in \cite{Kaniuth2013}.% (See Proposition 1.35, respectively, Proposition 1.71.) 
\begin{Proposition}\label{Proposition:commutant}
Let $\rho\colon\H\to\U(d_\rho)$ be a representation of a discrete group $\H<\E(d)$.
Then $\rho$ is irreducible if and only if
\[\text{commutant of }\rho(\H):=\set{T\in\C^{d_\rho\times d_\rho}}{T\rho(g)=\rho(g)T\text{ for all }g\in \H}=\C I_{d_\rho}.
\]
\end{Proposition}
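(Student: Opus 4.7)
This is a finite-dimensional version of Schur's lemma, and the strategy is the standard one, using crucially that $\rho$ takes values in the unitary group $\U(d_\rho)$.

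For the ``if'' direction I would argue by contraposition. Suppose $\rho$ is reducible, so there is a proper nonzero subspace $V \subset \C^{d_\rho}$ invariant under $\rho(\H)$. Because every $\rho(g)$ is unitary, $V^\perp$ is invariant as well, and hence the orthogonal projection $P_V$ onto $V$ commutes with every $\rho(g)$. Since $0 < \dim V < d_\rho$, $P_V$ is not a scalar multiple of $I_{d_\rho}$, contradicting the assumed triviality of the commutant.

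For the ``only if'' direction, assume $\rho$ is irreducible and take $T$ in the commutant. The key preparatory observation is that the commutant is closed under taking Hermitian adjoints: from $T\rho(g) = \rho(g) T$, taking $H$-adjoints and using $\rho(g)^H = \rho(g)^{-1} = \rho(g^{-1})$ (since $\rho(g) \in \U(d_\rho)$) gives $T^H \rho(g^{-1}) = \rho(g^{-1}) T^H$, and letting $g$ range over $\H$ shows $T^H$ also lies in the commutant. Writing $T = \tfrac12(T+T^H) + \iu \cdot \tfrac{1}{2\iu}(T - T^H)$, it suffices to show that every Hermitian matrix $S$ in the commutant is a scalar multiple of $I_{d_\rho}$. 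Pick a (necessarily real) eigenvalue $\lambda$ of $S$ and consider $E_\lambda = \ker(S - \lambda I_{d_\rho})$; the commutation relation shows that $\rho(g) E_\lambda \subset E_\lambda$ for all $g \in \H$, so $E_\lambda$ is a nonzero invariant subspace. Irreducibility forces $E_\lambda = \C^{d_\rho}$, whence $S = \lambda I_{d_\rho}$.

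The only step that requires care is the closure of the commutant under adjoints, where the unitarity of $\rho$ is essential; everything else is either linear algebra (spectral theorem for Hermitian matrices) or direct computation. I do not foresee a genuine obstacle, as the finite dimensionality furnished by Remark~\ref{Remark:DefRep} sidesteps the analytic subtleties that arise in the infinite-dimensional version of Schur's lemma.
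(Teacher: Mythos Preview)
Your proof is correct; this is indeed the standard finite-dimensional Schur lemma, and both directions are argued cleanly. The unitarity of $\rho$ is used exactly where it is needed: in the ``if'' direction to conclude that $V^\perp$ is invariant, and in the ``only if'' direction to show the commutant is $*$-closed so that one may reduce to Hermitian elements.

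The paper, however, does not supply its own proof: it simply states that this proposition (together with Proposition~\ref{Proposition:homeomorphism}) is well-known and refers to \cite{Kaniuth2013} for a proof in the more general setting of locally compact groups. So there is nothing to compare against beyond noting that you have given a self-contained elementary argument where the paper defers to the literature. This is entirely appropriate here, since the finite-dimensional case needs none of the analytic machinery (von Neumann algebras, type~I conditions) that the general locally compact case requires.
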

\begin{Proposition}\label{Proposition:homeomorphism}
Let $\NN$ be a normal subgroup of a discrete group $\H<\E(d)$ and $\pi\colon \H\to \H/\NN$ be the quotient homomorphism.
The map $\rho\mapsto\rho\circ \pi$ is a homeomorphism of $\dual{\H/\NN}$ with the subset of $\dual\H$ consisting of those elements of $\dual\H$ which annihilate $\NN$.
\end{Proposition}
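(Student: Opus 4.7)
The plan is to verify four things in turn: that the assignment $\rho\mapsto\rho\circ\pi$ descends to a well-defined map on equivalence classes with image inside the stated subset, that this map is injective, that it is surjective onto representations annihilating $\NN$, and finally that it is a homeomorphism with respect to the Fell topologies.

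First I would fix a representative homomorphism $\rho:\H/\NN\to\U(d_\rho)$ of an element of $\dual{\H/\NN}$. Then $\rho\circ\pi$ is a unitary representation of $\H$, and since $\pi:\H\to\H/\NN$ is surjective its image equals $\rho(\H/\NN)$, so the two representations share the same invariant subspaces of $\C^{d_\rho}$; irreducibility of $\rho$ therefore transfers to $\rho\circ\pi$. Moreover $\rho\circ\pi$ is trivial on $\ker\pi=\NN$, so it annihilates $\NN$. If $\rho'$ is unitarily equivalent to $\rho$ via some $T\in\U(d_\rho)$, then the same $T$ conjugates $\rho'\circ\pi$ to $\rho\circ\pi$, so the map is well-defined on equivalence classes.

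Injectivity is immediate: any $T\in\U(d)$ that intertwines $\rho_1\circ\pi$ and $\rho_2\circ\pi$ satisfies $T^H\rho_1(\pi(g))T=\rho_2(\pi(g))$ for every $g\in\H$, and surjectivity of $\pi$ upgrades this to an intertwiner of $\rho_1$ and $\rho_2$ themselves. For surjectivity, let $\sigma\colon\H\to\U(d)$ be irreducible with $\sigma|_\NN=I_d$; the universal property of the quotient yields a unique homomorphism $\bar\sigma\colon\H/\NN\to\U(d)$ with $\sigma=\bar\sigma\circ\pi$, and again $\bar\sigma(\H/\NN)=\sigma(\H)$ gives that $\bar\sigma$ is irreducible. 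Thus $[\bar\sigma]\in\dual{\H/\NN}$ maps to $[\sigma]$.

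The last step, the homeomorphism claim, is the main obstacle. One needs a concrete description of the Fell topology on $\dual\H$ and $\dual{\H/\NN}$ in terms of uniform approximation of matrix coefficients on compacta (equivalently, pointwise approximation in the discrete case). The identity $(\rho\circ\pi)_{ij}(g)=\rho_{ij}(\pi(g))$ shows that pull-back along $\pi$ carries matrix coefficients to matrix coefficients, so a net $\rho_\alpha\to\rho$ in $\dual{\H/\NN}$ converts to $\rho_\alpha\circ\pi\to\rho\circ\pi$ in $\dual\H$. The converse direction uses that every element $h\in\H/\NN$ is the image of some $g\in\H$, so evaluations on $\H/\NN$ are controlled by evaluations on $\H$. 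Rather than carry out this topological bookkeeping from scratch, I would invoke the general result for locally compact groups as recorded in \cite{Kaniuth2013}, which covers this case verbatim.
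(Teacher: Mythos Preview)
Your argument is correct. The paper does not actually supply a proof of this proposition; it simply remarks that the result is well-known and refers the reader to \cite{Kaniuth2013} for a proof in the general locally compact setting. Your proposal is therefore strictly more detailed: you verify the bijection on equivalence classes by hand (well-definedness, injectivity, surjectivity via the universal property of the quotient) and only defer to \cite{Kaniuth2013} for the Fell-topology bookkeeping, whereas the paper outsources the entire statement to that reference.
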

Our setting allows for a definition of induced representations as for finite groups, see, \eg, \cite[Section 8.2]{Steinberg2012}. (For general locally compact groups the definition is more complicated, see, \eg, \cite[Chapter 2]{Kaniuth2013}.)
\begin{Definition}\label{Definition:IndRep}
Let $\H<\E(d)$ be discrete and $\K$ be a subgroup of $\H$ such that the index $n=\abs{\H:\K}$ is finite.
Choose a complete set of representatives $\{h_1,\dots,h_n\}$ of the left cosets of $\K$ in $\H$.
Suppose $\rho\colon\K\to\U(d_\rho)$ is a representation of $\K$.
Let us introduce a dot notation in this context by setting
\begin{align*}
\dot\rho(g):=\begin{cases}
      \rho(g) & \text{if }g\in\K\\
      0_{d_\rho,d_\rho} & \text{else}
    \end{cases}
\end{align*}
for all $g\in\H$.
The \emph{induced representation} $\Ind_\K^\H\rho\colon\H\to\U(nd_\rho)$ is defined by
\[\Ind_\K^\H\rho(g)=\begin{bmatrix}\dot\rho(h_1^{-1}gh_1)&\cdots&\dot\rho(h_1^{-1}gh_n)\\ \vdots&\ddots&\vdots\\\dot\rho(h_n^{-1}gh_1)&\cdots&\dot\rho(h_n^{-1}gh_n)\end{bmatrix}\qquad\text{for all }g\in\H.\]
The \emph{induced representation} of an equivalence class of representations is the equivalence class of the induced representation of a representative. 
Moreover, let $\Ind_\K^\H(\dual\K)$ denote the set of all induced representations of $\dual\K$.
We also write $\Ind$ instead of $\Ind_\K^\H$ if $\K$ and $\H$ are clear by context.
\end{Definition}
The following proposition is standard in Clifford theory. %cf.\ \cite[Proposition 2.39]{Kaniuth2013}. Für die Injektivität: Theorem 3.2(ii) of ON INDUCED REPRESENTATIONS OF DISCRETE GROUPS MICHAEL W. BINDER. 
\begin{Proposition}\label{Proposition:Induced}
Let $\H<\E(d)$ be discrete and $\NN$ be a normal subgroup of $\H$ such that the index $\abs{\H:\NN}$ is finite.
Then the map
\begin{align*}
\dual\NN/\H\to\Ind_\NN^\H(\dual\NN), \qquad 
\H\cdot\rho\mapsto\Ind_\NN^\H\rho
\end{align*}
is bijective, where $\dual\NN/\H=\set{\H\cdot\rho}{\rho\in\dual\NN}$.
\end{Proposition}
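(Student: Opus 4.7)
The plan is to verify that the map is well defined and then establish injectivity and surjectivity separately. Surjectivity is essentially definitional: $\Ind_\NN^\H(\dual\NN)$ was introduced as the set of all induced representations arising from $\dual\NN$, so every element of the codomain is $\Ind_\NN^\H\rho$ for some $\rho\in\dual\NN$.

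For well-definedness, I would argue in two steps. First, if $\rho$ and $\rho'$ are unitarily equivalent via some $T\in\U(d_\rho)$, then the block diagonal matrix $I_n\otimes T$ conjugates $\Ind_\NN^\H\rho$ to $\Ind_\NN^\H\rho'$, so the induced representation depends only on the equivalence class of $\rho$. Second, orbit-invariance: if $\rho'=g\cdot\rho$ for some $g\in\H$, then $\{h_1g,\ldots,h_ng\}$ is also a complete set of left coset representatives of $\NN$ in $\H$, and a direct substitution in the block matrix formula shows that using these new representatives to induce $\rho$ yields exactly $\Ind_\NN^\H\rho'$ (built from $\{h_1,\ldots,h_n\}$). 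Since the induced representation is independent of the choice of coset representatives up to conjugation by a permutation matrix (tensored with $I_{d_\rho}$), this proves $\Ind_\NN^\H\rho\sim\Ind_\NN^\H\rho'$.

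For injectivity, the key observation is that restriction to $\NN$ decomposes the induced representation along the orbit. Concretely, for $n\in\NN$ we have $h_i^{-1}nh_j\in\NN$ if and only if $h_i\NN=h_j\NN$, i.e.\ $i=j$, so the block matrix defining $\Ind_\NN^\H\rho$ is block-diagonal on $\NN$ and one computes
\[ (\Ind_\NN^\H\rho)|_\NN\;=\;\bigoplus_{i=1}^n\rho(h_i^{-1}\,\fdot\, h_i)\;=\;\bigoplus_{i=1}^n h_i\cdot\rho. \]
Thus the set of irreducible constituents of $(\Ind_\NN^\H\rho)|_\NN$ is precisely the $\H$-orbit $\H\cdot\rho$ (each orbit representative appearing with multiplicity $\abs{\H^\rho:\NN}$). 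If $\Ind_\NN^\H\rho\sim\Ind_\NN^\H\rho'$, then their restrictions to $\NN$ agree, so the two orbits $\H\cdot\rho$ and $\H\cdot\rho'$ coincide, which gives injectivity.

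The main obstacle is the injectivity argument: it requires the restriction calculation above and the uniqueness of the decomposition of a finite-dimensional unitary representation into irreducibles (which follows from Schur's lemma via Proposition~\ref{Proposition:commutant}). The orbit-invariance half of well-definedness is also subtle because one must keep careful track of how the re-indexing $h_i\mapsto h_ig$ permutes the block structure, but once the independence of $\Ind_\NN^\H\rho$ from the choice of coset representatives is in place, this reduces to bookkeeping.
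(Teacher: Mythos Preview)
Your argument is correct. The restriction computation $(\Ind_\NN^\H\rho)|_\NN\cong\bigoplus_{i=1}^n h_i\cdot\rho$ is exactly what is needed, and together with the uniqueness of the isotypic decomposition of a finite-dimensional unitary representation it yields injectivity cleanly. Your well-definedness and surjectivity steps are also fine; the only point worth tightening is that $\{h_1g,\ldots,h_ng\}$ being a complete system of left coset representatives uses normality of $\NN$ (since $(h_jg)^{-1}(h_ig)\in\NN$ iff $h_j^{-1}h_i\in g\NN g^{-1}=\NN$), but you implicitly have this.

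The paper, by contrast, does not argue at all: its proof consists of two literature citations, invoking \cite[Proposition~2.39]{Kaniuth2013} for well-definedness and surjectivity and \cite[Theorem~3.2(ii)]{Binder1993} for injectivity. Your route is therefore genuinely different and more self-contained. The trade-off is that the cited results are stated in greater generality (locally compact groups), whereas your elementary argument relies on the finite-index, finite-dimensional setting through the explicit block-matrix description of $\Ind_\NN^\H\rho$ and the Krull--Schmidt-type uniqueness. In the context of this paper, where only discrete subgroups of $\E(d)$ with finite-index normal subgroups are needed, your direct approach is entirely adequate and arguably preferable for readability.
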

\begin{proof}
That $\varphi$ is well-defined and surjective follows, \eg, from \cite[Proposition 2.39]{Kaniuth2013}. For the injectivity of $\varphi$ see, \eg, \cite[Theorem 3.2(ii)]{Binder1993}. 
\end{proof}
We also record the following well-known results on the relation of irreducible and induced representations adapted to our setting. %
\begin{Proposition}\label{Proposition:Teildarstellung}
Let $\H<\E(d)$ be discrete and $\K$ be a subgroup of $\H$ of finite index. Then every irreducible representation of $\H$ is a subrepresentation of a representation which is induced by an irreducible representation of $\H$. 
\end{Proposition}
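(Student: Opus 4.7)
The plan is to establish this via Frobenius reciprocity. Given an irreducible $\sigma\in\dual\H$ acting on $V=\C^{d_\sigma}$ (finite-dimensional by Remark~\ref{Remark:DefRep}), the restriction $\sigma|_\K$ is a finite-dimensional unitary representation of $\K$ and hence decomposes as an orthogonal direct sum of $\K$-irreducibles. I would pick any one such constituent $\rho\in\dual\K$, realized on a $\K$-invariant subspace $W\subset V$ with $\sigma(k)|_W=\rho(k)$ for all $k\in\K$. The claim then reduces to showing that $\sigma$ embeds as a subrepresentation of $\Ind_\K^\H\rho$.

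To produce such an embedding explicitly, I would fix left coset representatives $h_1=\id,h_2,\ldots,h_n$ of $\K$ in $\H$ and define the linear map
\[
\phi\colon\bigoplus_{i=1}^n W\longrightarrow V,\qquad (w_1,\ldots,w_n)\mapsto\sum_{i=1}^n\sigma(h_i)w_i.
\]
Using the block-matrix formula in Definition~\ref{Definition:IndRep}, one verifies that $\phi$ intertwines $\Ind_\K^\H\rho$ with $\sigma$: for each $g\in\H$ and each index $l$ there is a unique $i=j(l)$ with $h_i^{-1}gh_l\in\K$, so that $gh_l=h_{j(l)}\bigl(h_{j(l)}^{-1}gh_l\bigr)$, and the identity $\sigma|_W=\rho$ collapses the block sum in $\phi\circ\Ind_\K^\H\rho(g)$ to $\sigma(g)\circ\phi$.

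Once equivariance is established, the embedding follows formally. Since $\phi(w,0,\ldots,0)=\sigma(\id)w=w$ for $w\in W$, the image of $\phi$ is a nonzero $\H$-invariant subspace of $V$, which by irreducibility of $\sigma$ must equal $V$; in particular $\phi$ is surjective. As both $\sigma$ and $\Ind_\K^\H\rho$ are unitary, the adjoint $\phi^*\colon V\to\bigoplus_{i=1}^n W$ is a nonzero $\H$-intertwiner, and irreducibility of $\sigma$ forces $\ker\phi^*=\{0\}$. Thus $\phi^*$ exhibits $\sigma$ as a subrepresentation of $\Ind_\K^\H\rho$.

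The only real work is the equivariance check for $\phi$, which boils down to bookkeeping the permutation $l\mapsto j(l)$ that $g\in\H$ induces on the coset representatives. No infinite-dimensional subtleties arise, since by Remark~\ref{Remark:DefRep} all irreducible representations of $\H$ are automatically finite-dimensional, and the proof therefore proceeds essentially as in the finite-group case.
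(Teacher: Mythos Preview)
Your argument is correct. The equivariance of $\phi$ follows exactly as you describe, and once $\phi$ is a nonzero intertwiner the surjectivity of $\phi$ (by irreducibility of $\sigma$) gives, via the orthogonal complement of $\ker\phi$ (or equivalently via $\phi^*$ and the polar decomposition), a unitary embedding of $\sigma$ into $\Ind_\K^\H\rho$.

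The paper's proof is much shorter: it simply picks an irreducible constituent $\sigma\in\dual\K$ of the restriction and then invokes \cite[Theorem~8.2]{Mackey:52} (Frobenius reciprocity for locally compact groups) to conclude that the original irreducible is a subrepresentation of $\Ind_\K^\H\sigma$. In other words, the paper outsources precisely the computation you carry out by hand. Your approach has the advantage of being entirely self-contained and explicit within the finite-dimensional matrix framework of Definition~\ref{Definition:IndRep}, which is natural here since Remark~\ref{Remark:DefRep} guarantees finite dimensionality. The paper's approach buys brevity and places the result in the broader context of Mackey's theory, but at the cost of relying on an external reference for what is, in this setting, an elementary verification.
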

\begin{proof}
Let $\rho$ be an irreducible representation of $\H$. Choose $\sigma\in\dual \K$ such that $\sigma$ is a subrepresentation of $\rho$ restricted to $\H$. Then \cite[Theorem 8.2]{Mackey:52} shows that $\rho$ is a subrepresentation of $\Ind_{\K}^{\H}\sigma$.
\end{proof}
The following is Mackey's irreducibility criterion, cp.\ \cite{Mackey:51}, which we state in a form that is directly implied by, \eg, \cite[Theorem 1.1]{Bekka2003}. 
\begin{Theorem}\label{Theorem:Mackey}
Let $\H<\E(d)$ be discrete and $\NN$ be a normal subgroup of $\H$ of finite index. Let $\rho$ be an irreducible representation of $\NN$. Then $\Ind_{\NN}^{\H}\rho$ is irreducible if and only if for every $g\in \H \setminus \NN$ the representations $g\cdot \rho$ and $\rho$ are not isomorphic.
\end{Theorem}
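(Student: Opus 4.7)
The strategy is to compute $\dim \operatorname{End}_\H(\Ind_\NN^\H \rho)$ and apply Proposition~\ref{Proposition:commutant}, which characterizes irreducibility of a finite-dimensional unitary representation by the condition that its commutant is one-dimensional. I plan to show that this dimension equals the number of cosets $g\NN \in \H/\NN$ such that $g \cdot \rho \cong \rho$. The identity coset always contributes $1$ (for $h \in \NN$ the unitary $\rho(h)$ intertwines $\rho$ with $h\cdot\rho$, cp.\ equation~\eqref{eq:ActionTrivial}), so the total dimension equals $1$ precisely when no coset outside $\NN$ stabilizes $\rho$, which is the stated criterion.

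The first step will be to decompose the restriction $(\Ind_\NN^\H\rho)\big|_\NN$. Fix a transversal $\{h_1,\dots,h_n\}$ of $\NN$ in $\H$ as in Definition~\ref{Definition:IndRep}. The crucial observation is that, by normality of $\NN$, one has $h_i^{-1} n h_j \in \NN$ for some $n \in \NN$ if and only if $h_i\NN = h_j\NN$, i.e., $i = j$. Reading off Definition~\ref{Definition:IndRep}, the restricted induced representation is therefore block-diagonal, the $i$-th diagonal block being $n \mapsto \rho(h_i^{-1} n h_i) = (h_i\cdot\rho)(n)$, so that
\[ (\Ind_\NN^\H\rho)\big|_\NN \;\cong\; \bigoplus_{i=1}^n h_i\cdot\rho. \]

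The second step will be Frobenius reciprocity, which transfers verbatim from the finite-group situation because $\abs{\H:\NN}$ is finite:
\[ \operatorname{End}_\H(\Ind_\NN^\H\rho) \;\cong\; \operatorname{Hom}_\NN\bigl(\rho,(\Ind_\NN^\H\rho)\big|_\NN\bigr) \;\cong\; \bigoplus_{i=1}^n \operatorname{Hom}_\NN(\rho, h_i\cdot\rho). \]
Applying Schur's lemma to the irreducibles $\rho$ and $h_i\cdot\rho$, each summand is $1$-dimensional or $0$-dimensional according as $h_i\cdot\rho \cong \rho$ or not; summing these dimensions produces exactly the count announced above.

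No serious obstacle is anticipated: the argument is the classical Mackey proof, and its two ingredients — the block-decomposition of $(\Ind_\NN^\H\rho)|_\NN$ via normality, and Frobenius reciprocity for a finite-index subgroup — carry over to the discrete setting with no change, since all sums range over the finite quotient $\H/\NN$. The only delicate point is the verification that $h_i^{-1} n h_j \in \NN$ forces $i = j$, which uses the normality of $\NN$ in an essential way.
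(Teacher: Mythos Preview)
Your proposal is correct and follows the classical route to Mackey's criterion: restrict the induced representation back to $\NN$, use normality to see that it decomposes as $\bigoplus_i h_i\cdot\rho$, apply Frobenius reciprocity to compute the dimension of the commutant, and invoke Schur's lemma and Proposition~\ref{Proposition:commutant}. All steps are valid in the present setting since $\abs{\H:\NN}<\infty$ and all representations involved are finite-dimensional; in particular, Frobenius reciprocity for the explicit matrix model of Definition~\ref{Definition:IndRep} goes through verbatim as in the finite-group case because only the (finite) coset decomposition enters.

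The paper, however, does not supply its own proof of this statement: it simply records Mackey's irreducibility criterion and refers to \cite{Mackey:51} and \cite[Theorem~1.1]{Bekka2003} for a proof. So your write-up is not a comparison target so much as an independent, self-contained argument that the paper chose to outsource. What you gain is a proof internal to the paper's framework, using only Proposition~\ref{Proposition:commutant} and elementary facts about induction; what the paper gains by citing is brevity and the ability to invoke the result in greater generality without reproducing standard material.
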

The Haar measure (\ie, the counting measure) on a discrete group of Euclidean isometries induces a measure on the dual space as follows, cf., \eg,  \cite{Dixmier1977}. 
\begin{Theorem}\label{Theorem:Plancherel-measure}
For any discrete subgroup $\H$ of $\E(d)$ there is a unique measure $\mu_{\H}$ on $\dual\H$, the so-called \emph{Plancherel measure}, such that 
\[ \sum_{g \in \H} |f(g)|^2 
    = \int_{\dual \H} \operatorname{Tr} \big( \rho(f)^H \rho(f) \big) \, \mathrm{d}\mu_{\H}, 
\]
for all $f \in \ell^1(\H)$, where $\rho(f) = \sum_{g \in \H} f(g) \rho(g) \in \C^{d_\rho\times d_\rho}$. 
\end{Theorem}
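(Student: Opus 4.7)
My plan is to deduce this from the classical abstract Plancherel theorem for unimodular Type~I locally compact groups (see, e.g., Dixmier, \emph{$C^*$-algebras}, \S18). First I would observe that the discrete group $\H$ is automatically a locally compact (Hausdorff) group when endowed with the discrete topology, its Haar measure is (up to normalization) the counting measure, and being discrete forces $\H$ to be unimodular. The existence statement of the Plancherel theorem in this setting then produces a (positive, Borel) measure $\mu_\H$ on the Fell topology of $\dual\H$ together with a direct-integral decomposition of the left regular representation, from which the Parseval-type identity
\[
\sum_{g\in\H}\abs{f(g)}^2=\int_{\dual\H}\tr\bigl(\rho(f)^H\rho(f)\bigr)\,\mathrm d\mu_\H(\rho)
\]
follows for all $f\in\ell^2(\H)\cap\ell^1(\H)$, and in particular for all $f\in\ell^1(\H)$ (note that on a discrete group $\ell^1\subset\ell^2$). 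Uniqueness of $\mu_\H$ then follows from the fact that the Plancherel identity determines the trace functional $f\mapsto f(e)$ on $\ell^1\ast\ell^1$, which in turn determines $\mu_\H$ uniquely.

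The non-trivial hypothesis to verify is that $\H$ is a Type~I group, which is required in order for the abstract Plancherel theorem to apply in the form above (with the measure living on the irreducible dual rather than on the quasi-dual). Here I would invoke Thoma's theorem, or more elementarily the fact that every almost abelian group (\ie~a group possessing an abelian subgroup of finite index) is of Type~I, a classical result by Kaniuth (and also a consequence of Mackey's normal subgroup analysis). By Remark~\ref{Remark:tk}\ref{item:tk}, for any $m\in\MM$ the abelian subgroup $\T^m$ has finite index in $\H$, so $\H$ is indeed almost abelian and hence Type~I.

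Once Type~I is established, the main obstacle is not really present: everything is a direct application of general harmonic analysis. The one subtlety worth flagging is the measurable structure on $\dual\H$; but since $\H$ is second countable (even countable) and Type~I, $\dual\H$ is a standard Borel space with respect to the Mackey Borel structure, which coincides modulo null sets with the Borel structure of the Fell topology, so no pathology arises. With these ingredients assembled, both existence and uniqueness of $\mu_\H$ follow, and the stated Parseval identity is precisely the restriction of the direct-integral Plancherel formula to $\ell^1(\H)$.
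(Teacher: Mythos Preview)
Your proposal is correct and matches the paper's approach: the paper does not give a proof of this statement at all but simply cites Dixmier's \emph{$C^*$-algebras} for the abstract Plancherel theorem, which is exactly the reference you invoke. Your added verification that $\H$ is Type~I (via virtual abelianness from Remark~\ref{Remark:tk}\ref{item:tk}, after conjugating an arbitrary discrete $\H<\E(d)$ into the standard form of Definition~\ref{Definition:discreteisometrygroup}) is the one hypothesis the paper leaves implicit, so your write-up is in fact more complete than the paper's treatment.
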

(If $\H$ is abelian, then $\mu_\H$ is (a multiple of) the Haar measure of $\dual \H$.) We finally state a version of the Mackey machine (cf.\ \cite{Mackey:58}) in a form proven in \cite{KleppnerLipsman:72} and adapted to our setting of discrete subgroups of $\E(d)$. 

\begin{Theorem}\label{Theorem:Mackey-machine}
Let $\H<\E(d)$ be discrete, $\NN$ be a normal abelian subgroup of $\H$ of finite index and set $\P = \H/\NN$. Suppose there is an $\H$-invariant measurable $\mu_\NN$-conull subset $\Omega\subset\dual \NN$ such that all $\sigma\in\Omega$ can be extended to a unitary representation $\tilde\sigma$ of $\H^\sigma$. For $\sigma \in \Omega$ and $\rho \in \P^\sigma$ define the unitary representation $\sigma\times\rho$ of $\H^\sigma$, acting on $\C^{d_\sigma d_\rho}$, by
\[(\sigma\times\rho)(g)=\tilde\sigma(g)\otimes\rho(g\NN)\qquad\text{for all }g\in\H^\sigma.\]
Then the set
\[\bigcup_{\mathcal \P \cdot \sigma\in\Omega/\P}\set[\big]{\Ind_{\H^\sigma}^\H(\sigma\times\rho)}{\rho\in\dual{\P^\sigma}}\]
is a $\mu_\H$-conull subset of $\dual \H$.
\end{Theorem}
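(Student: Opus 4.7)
My plan is to derive the result as a specialization of the general Mackey normal subgroup analysis from \cite{Mackey:58} and \cite{KleppnerLipsman:72} to the favorable setting at hand, where $\H$ is discrete and $\NN$ is abelian of finite index. In particular, since $\NN$ is abelian every $\sigma\in\dual\NN$ is a character (so $d_\sigma=1$) and $\dual\NN$ carries the standard Borel structure of the Pontryagin dual with Haar measure $\mu_\NN$. The action of $\H$ on $\dual\NN$ factors through the \emph{finite} group $\P=\H/\NN$ by \eqref{eq:ActionTrivial}, so orbits, stabilizers, and the orbit space $\Omega/\P$ are all well-behaved measurably; in particular there exist Borel cross-sections. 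This keeps us firmly in the type I regime, within which the Mackey machine gives a full description of $\dual\H$.

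The construction side is straightforward: for $\sigma\in\Omega$ with the prescribed unitary extension $\tilde\sigma\colon\H^\sigma\to\U(1)$ and $\rho\in\dual{\P^\sigma}$, the tensor $(\sigma\times\rho)(g)=\tilde\sigma(g)\otimes\rho(g\NN)$ is a homomorphism $\H^\sigma\to\U(d_\rho)$ because both $\tilde\sigma$ and $\rho\circ q$ (with $q\colon\H^\sigma\to\P^\sigma$ the quotient) are unitary. It is irreducible because $\tilde\sigma$ is scalar valued and $\rho$ is irreducible. Since $[\H:\H^\sigma]$ divides $|\P|<\infty$, the induced representation $\Ind_{\H^\sigma}^\H(\sigma\times\rho)$ is well-defined as in Definition~\ref{Definition:IndRep}. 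Its irreducibility follows from the \emph{general} Mackey irreducibility criterion (not the normal-subgroup version in Theorem~\ref{Theorem:Mackey}), which upon restriction to $\NN$ reduces to the tautology that $g\cdot\sigma\neq\sigma$ for $g\notin\H^\sigma$. For inequivalence among different labels, one observes that the restriction of $\Ind_{\H^\sigma}^\H(\sigma\times\rho)$ to $\NN$ is $d_\rho$ copies of $\bigoplus_{g\NN\in\P/\P^\sigma}g\cdot\sigma$, so different $\P$-orbits in $\Omega$ produce inequivalent inductions; within a single orbit, distinct $\rho\in\dual{\P^\sigma}$ are separated by a Frobenius reciprocity / intertwining-number computation.

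The deeper and only truly non-trivial point is the $\mu_\H$-exhaustion: that the representations so constructed cover $\dual\H$ up to a Plancherel-null set. Here I would invoke \cite[Sec.~9]{KleppnerLipsman:72}, whose proof proceeds by decomposing the regular representation $\lambda_\H$ on $\ell^2(\H)$ as a direct integral; restricted to $\NN$ this is $\int_{\dual\NN}^\oplus\sigma\,d\mu_\NN(\sigma)$, and inducing back to $\H$ through the intermediate group $\H^\sigma$, together with the Mackey subgroup theorem, realises $\mu_\H$-a.e.\ $\pi\in\dual\H$ as some $\Ind_{\H^\sigma}^\H(\sigma\times\rho)$ of the required form, with the exceptional set contained in $\dual\NN\setminus\Omega$ (which is $\mu_\NN$-null and $\H$-invariant). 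The main obstacle in any fully self-contained write-up would be precisely this measure-theoretic part -- the honest direct-integral computation across the tower $\NN\le\H^\sigma\le\H$ -- and that is exactly what makes it sensible to quote the existing general result rather than reprove it here.
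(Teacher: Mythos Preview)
Your proposal is in line with what the paper does, but note that the paper does not actually supply a proof of this theorem: it is presented in the appendix as a known result, introduced with ``We finally state a version of the Mackey machine (cf.\ \cite{Mackey:58}) in a form proven in \cite{KleppnerLipsman:72} and adapted to our setting of discrete subgroups of $\E(d)$.'' In other words, the paper's ``proof'' is precisely the citation of \cite{Mackey:58,KleppnerLipsman:72}, together with the implicit observation that discrete subgroups of $\E(d)$ with an abelian normal subgroup of finite index satisfy the hypotheses needed there. Your write-up does exactly this---invokes the general Kleppner--Lipsman result and explains why the present hypotheses (discreteness, $\NN$ abelian, $|\H:\NN|<\infty$, finite $\P$-action on $\dual\NN$) place us squarely in its scope---so it matches the paper's treatment. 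Your additional remarks on irreducibility and inequivalence of the induced representations go beyond what the paper records, but they are correct and helpful context.
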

\end{appendices}

\bibliography{Literature/Literature}

\end{document}